\DeclareSymbolFont{cyrletters}{OT2}{wncyr}{m}{n}
\numberwithin{equation}{section} \numberwithin{figure}{section}
\DeclareMathOperator{\Pic}{Pic} 
\DeclareMathOperator{\Gal}{Gal} \DeclareMathOperator{\NS}{NS}
\DeclareMathOperator{\Spec}{Spec}
\DeclareMathOperator{\Hom}{Hom} 
\DeclareMathOperator{\im}{Im}   
\DeclareMathOperator{\vol}{vol} 
 \DeclareMathOperator{\Val}{Val}
\DeclareMathOperator{\Br}{Br}
\DeclareMathOperator{\Norm}{N}  
\DeclareMathOperator{\supp}{supp}
\DeclareMathSymbol{\Sha}{\mathalpha}{cyrletters}{"58}
\newcommand{\OO}{\mathcal{O}}
\newcommand{\oo}{\mathfrak{o}}
\newcommand{\cP}{\mathcal{P}}
\newcommand{\x}{\mathbf{x}}
\newcommand{\y}{\mathbf{y}}
\renewcommand{\d}{\mathrm{d}}
\renewcommand{\c}{\mathbf{c}}
\renewcommand{\v}{\mathbf{v}}
\renewcommand{\u}{\mathbf{u}}
\newcommand{\z}{\mathbf{z}}
\renewcommand{\a}{\mathbf{a}}
\renewcommand{\AA}{\mathbb{A}}
\newcommand{\A}{\mathbf{A}}
\newcommand\FF{\mathbb{F}}
\newcommand\PP{\mathbb{P}}
\newcommand\ZZ{\mathbb{Z}}
\newcommand\NN{\mathbb{N}}
\newcommand\QQ{\mathbb{Q}}
\newcommand\RR{\mathbb{R}}
\newcommand\GG{\mathbb{G}}
\newcommand\Gm{\GG_\mathrm{m}}
\newcommand{\Adele}{\mathbf{A}}
\newcommand{\HH}{\mathrm{H}}
\newcommand{\fo}{\mathfrak{o}}
\newcommand{\fp}{\mathfrak{p}}
\newcommand{\fa}{\mathfrak{a}}
\newcommand{\ve}{\varepsilon}
\renewcommand{\hat}{\widehat}
\renewcommand{\leq}{\leqslant}
\renewcommand{\geq}{\geqslant}
\newcommand{\sumstar}{\sideset{}{^*}\sum}
\newcommand{\0}{\mathbf{0}}
\newtheorem{lemma}{Lemma}
\newtheorem{theorem}[lemma]{Theorem}
\newtheorem{proposition}[lemma]{Proposition}
\newtheorem{corollary}[lemma]{Corollary}
\theoremstyle{definition}
\newtheorem{example}[lemma]{Example}
\newtheorem{definition}[lemma]{Definition}
\newtheorem{remark}[lemma]{Remark}
\numberwithin{lemma}{section}
\begin{document}

\title[Sieving rational points]
{Sieving rational points on varieties}

\author{\sc Tim Browning}
\address{School of Mathematics\\
University of Bristol\\ Bristol\\ BS8 1TW
\\ UK}
\email{t.d.browning@bristol.ac.uk}

\author{\sc Daniel Loughran}
\address{
School of Mathematics\\
University of Manchester \\
Oxford Road\\
Manchester\\
M13 9PL\\
UK}
\email{daniel.loughran@manchester.ac.uk}

\subjclass[2010]
{14G05; 
11N36, 
11P55, 
14D10. 
}

\begin{abstract}
An upper bound 
sieve for rational points on suitable varieties is developed, together with applications to counting rational points in thin sets, 
local solubility in families, 
and to the notion of 
``friable'' rational points with respect	to divisors.
In the special case of quadrics, sharper estimates are obtained by developing a version of the Selberg sieve for rational points. 
\end{abstract}

\date{\today}

\maketitle

\thispagestyle{empty}

\tableofcontents

\section{Introduction}

Sieves are a ubiquitous tool in analytic number theory and have numerous applications. Typically, 
one is given a subset $\Omega_p \subset \ZZ/p\ZZ$ for each prime $p$  and the challenge is to count the number of integers $n$ in an interval 
for which $n \bmod p \in \Omega_p$ for all $p$. 
In favourable situations one can deduce  asymptotic formulae from suitable equidistribution statements.  In this paper, however,  our focus is on upper bound sieves. These can  be obtained through a variety of means, the most successful being  variants of  the large sieve or the
Selberg sieve, as explained in \cite[Chapters 7--9]{FI}. 

The above set-up can be generalised in many ways, such as in the
abstract version of the large sieve developed by Kowalski 
 \cite[\S2.1]{Kow08}, for example.    In our investigation we adopt the 
 following approach: one is given a smooth projective variety $X$  over a number
field $k$, together with a height function $H$ and a model $\mathcal{X}$ over 
the ring of integers $\fo_k$ of $k$, and for each non-zero prime ideal $\fp$ of $\oo_k$ a subset $\Omega_\fp \subset \mathcal{X}(\oo_k/\fp)$.
The goal is to  obtain upper bounds for
$$
	\# \{ x \in X(k) : H(x) \leq B, ~x \bmod \fp \in \Omega_{\fp} \mbox{ for all } \fp\}.
$$
We adopt two points of view in 
addressing this  counting problem. First we see how much can be achieved by working in as general
a set-up as possible. The set-up we take is that of  varieties whose rational points are \emph{equidistributed}
with respect to a suitable adelic Tamagawa measure, a property that allows us to  sieve by any finite list of local conditions. Given the generality we work in, we are only able to obtain little $o$-results here, rather than precise upper bounds. Next, by specialising to the case of quadric hypersurfaces, we use the Hardy--Littlewood circle method to develop a version of the Selberg sieve for quadrics, which ultimately gives explicit  upper bounds.

\subsection{Equidistribution and sieving rational points} \label{sec:equi_intro}

\subsubsection{Manin's conjecture and equidistribution}
We begin by recalling Manin's conjecture \cite{FMT},  \cite{BM90}, \cite[\S3]{Pey03}.
We work with the following classes of varieties.

\begin{definition}
	A smooth projective geometrically integral variety $X$ over a field $k$ is called \emph{almost Fano}
	if 
	\begin{itemize}
		\item $\HH^1(X, \OO_X) = \HH^2(X,\OO_X) = 0$;
		\item The geometric Picard group $\Pic \bar{X}$ is torsion free;
		\item The anticanonical divisor $-K_X$ is big.
	\end{itemize}
\end{definition}
Let $X$ be an almost Fano variety over a number field $k$
and $H$ an anticanonical height function on $X$ (that is, a height function associated to
a choice of adelic metric on the anticanonical bundle of $X$). 
If $X(k) \neq \emptyset$, 
Manin's original conjecture predicts the existence of Zariski 
open subset $U \subset X$ such that
\begin{equation} \label{conj:Manin}
	N(U,H,B) := \#\{ x \in U(k) : H(x) \leq B\} \sim c_{X,H} B (\log B)^{\rho(X) - 1},
\end{equation}
where $\rho(X)$ is the rank of the Picard group of $X$ and $c_{X,H} > 0$.
The leading constant $c_{X,H}$ in \eqref{conj:Manin} has a conjectural interpretation
due to Peyre \cite{Pey95}, which is expressed in terms of a certain Tamagawa measure on
the adelic space $X(\Adele_k)$.

For our first results we assume that the rational points of bounded height 
are \emph{equidistributed}. Intuitively, 
this means that conditions imposed at finitely many different
places are asymptotically independent, and alter the leading constant in \eqref{conj:Manin}
by the Tamagawa measure of the imposed conditions. We recall the relevant definitions in \S \ref{sec:Tamagawa}. This property, first introduced to the subject by 
Peyre \cite[\S3]{Pey95}, is very natural; Peyre
showed that it holds if \eqref{conj:Manin} holds with Peyre's constant
with respect to all choices of anticanonical height function.

The equidistribtion property is known to hold for the following classes of almost Fano varieties:
Flag varieties \cite[\S6.2.4]{Pey95}, toric
varieties \cite[\S3.10]{CLT17}, equivariant compactifications of additive groups \cite[Rem.~0.2]{CLT02}, and complete intersections in many variables
(proved over $\QQ$ in \cite[Prop.~5.5.3]{Pey95}; the result over general number fields is obtained by modifying the arguments given in \cite[\S4.3]{Lou15}).

The equidistribution property trivially allows one to sieve with respect to finitely many primes.
One can use it to give upper bounds for sieving with respect to infinitely many primes by taking 
the limit over the conditions. 

\subsubsection{Thin sets}
The original version of Manin's conjecture \eqref{conj:Manin} is false in general, as first
shown in \cite{BT96}. The problem is that  the union of 
the accumulating subvarieties in $X$ can be Zariski dense, so that there is no sufficiently small open set $U\subset X$
on  which the expected asymptotic formula holds.

Numerous authors have recently investigated a ``thin'' version of Manin's conjecture (see  \cite[\S8]{Pey03}, \cite{LeR13}, \cite{BL17}  or \cite{LT17}),
 where one is allowed to remove a thin subset of $X(k)$, rather than just a Zariski closed set.
(We use the term \emph{thin set} in the sense of Serre \cite[\S3.1]{Ser08};  the 
various definitions are  recalled
in \S\ref{sec:thin}.)

A natural question is whether removing a thin subset could change the asymptotic behaviour of the counting function $N(U,H,B)$.
We show that this is not the case when the rational points are equidistributed.

\begin{theorem} \label{thm:equi_thin}
	Let $X$ be an almost Fano variety over $k$ and $H$ an anticanonical height function on $X$.
	Assume that the rational points are equidistributed on some dense open subset $U \subset X$.
	Let $\Upsilon \subset U(k)$ be thin. Then 
	$$\lim_{B \to \infty} \frac{\#\{ x \in \Upsilon : H(x) \leq B\}}{N(U,H,B)} = 0.$$	
\end{theorem}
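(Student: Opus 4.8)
The plan is to use the equidistribution hypothesis as a sieve at finitely many primes at a time and then let the number of primes tend to infinity; the one substantive ingredient is a lower bound, uniform over the primes, for the density of a family of local conditions that every point of the thin set must satisfy. By the definition of thin set, $\Upsilon$ is contained in a finite union of sets of one of two kinds: \emph{type I}, a subset of $Z(k)$ for a proper closed subvariety $Z\subsetneq X$; and \emph{type II}, a subset of $f(Y(k))$ for a dominant morphism $f\colon Y\to X$ of geometrically integral varieties that is generically finite of some degree $d\geq2$. Since a finite union of sets counted by $o(N(U,H,B))$ is again counted by $o(N(U,H,B))$, and since every piece is automatically contained in $U(k)$ once intersected with $\Upsilon$, it suffices to bound the counting functions of $\Upsilon\cap Z(k)$ and of $\Upsilon\cap f(Y(k))$ separately. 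A type-I piece is dealt with by concentration: writing $\tau$ for the Tamagawa measure on $X(\Adele_k)$ attached to $H$, for any fixed place $v_0$ the set $Z(k_{v_0})$ is a closed analytic subset of $X(k_{v_0})$ of dimension $<\dim X$, hence null for the local measure $\tau_{v_0}$, so by outer regularity one may choose, for each $\ve>0$, an open $B_\ve\supseteq Z(k_{v_0})$ of $\tau_{v_0}$-measure $<\ve$ and with null boundary. Every point of $Z(k)$ lies in the nice open adelic set $\Omega_\ve$ cut out by the single condition $x_{v_0}\in B_\ve$, so equidistribution applied to $\Omega_\ve$ bounds
\[
\limsup_{B\to\infty}\frac{\#\{x\in\Upsilon\cap Z(k):H(x)\leq B\}}{N(U,H,B)}\;\leq\;\frac{\tau(\Omega_\ve)}{\tau(X(\Adele_k))}\;\ll\;\ve ,
\]
and letting $\ve\to0$ shows this piece is negligible.

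For a type-II piece I first discard the closed locus over which $f$ is not finite étale---this is a type-I piece---and so reduce to bounding the counting function of $\Upsilon\cap V(k)\cap f(Y(k))$, where $V\subseteq X$ is a dense open over which $W:=f^{-1}(V)\to V$ is a finite étale cover of degree $d\geq2$ with $W$ geometrically integral; let $G\leq S_d$ be its geometric monodromy group, which acts transitively on the fibre. Fix projective $\oo_k$-models of $X,V,W$; then for all but finitely many primes $\fp$ the reduction $\mathcal{X}_\fp$ is smooth and geometrically integral, $\mathcal{W}_\fp\to\mathcal{V}_\fp$ is finite étale of degree $d$ with geometric monodromy $G$, and $q:=\#(\oo_k/\fp)$ is large. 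For such $\fp$ let $\Omega_\fp\subseteq\mathcal{X}(\oo_k/\fp)$ consist of the points lying outside $\mathcal{V}(\oo_k/\fp)$ together with those points of $\mathcal{V}(\oo_k/\fp)$ above which the fibre of $\mathcal{W}_\fp$ acquires an $(\oo_k/\fp)$-point. If $x\in\Upsilon\cap V(k)\cap f(Y(k))$ then a $k$-point of $Y$ above $x$ reduces to an $(\oo_k/\fp)$-point of $\mathcal{W}_\fp$ over $\bar x$, so $\bar x\in\Omega_\fp$ at every good $\fp$. It remains to bound $\mu_\fp(\Omega_\fp):=\#\Omega_\fp/\#\mathcal{X}(\oo_k/\fp)$ from above. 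By the Lang--Weil estimates $\#\mathcal{X}(\oo_k/\fp)=q^{\dim X}+O(q^{\dim X-1/2})$, so it suffices to bound from below the number of $v\in\mathcal{V}(\oo_k/\fp)$ whose fibre has no $(\oo_k/\fp)$-point. For the positive density of primes $\fp$ whose Frobenius element lies in $G$ (and not in a non-trivial coset of $G$ inside the arithmetic monodromy group) a Chebotarev density argument for the cover $\mathcal{W}_\fp\to\mathcal{V}_\fp$ shows this number equals $\bigl(\#\{g\in G:g\text{ has no fixed point}\}/\#G\bigr)\,q^{\dim X}+O(q^{\dim X-1/2})$. Now Jordan's theorem applies: a transitive subgroup of $S_d$ with $d\geq2$ contains a fixed-point-free element, because its identity fixes $d\geq2$ points whereas the average over $G$ of the number of fixed points equals $1$. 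Hence there are $\delta>0$, depending only on $G$, and an infinite set $\mathcal{P}$ of primes such that $\mu_\fp(\Omega_\fp)\leq1-\delta$ for all $\fp\in\mathcal{P}$.

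To conclude, let $S\subseteq\mathcal{P}$ be finite. The adelic set of points of $X(\Adele_k)$ whose $\fp$-component reduces into $\Omega_\fp$ for every $\fp\in S$ is open, and the standard identification of $\fp$-adic Tamagawa volume with point counts modulo $\fp$ gives that its $\tau$-measure is $\bigl(\prod_{\fp\in S}\mu_\fp(\Omega_\fp)\bigr)\tau(X(\Adele_k))$, all other local factors cancelling in the ratio. Every point of $\Upsilon\cap V(k)\cap f(Y(k))$ lies in this set, so the equidistribution property yields
\[
\limsup_{B\to\infty}\frac{\#\{x\in\Upsilon\cap V(k)\cap f(Y(k)):H(x)\leq B\}}{N(U,H,B)}\;\leq\;\prod_{\fp\in S}\mu_\fp(\Omega_\fp)\;\leq\;(1-\delta)^{\#S}.
\]
Letting $\#S\to\infty$ forces the left-hand side to $0$; this disposes of the type-II piece, and hence of $\Upsilon$.

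The main obstacle is the uniform density estimate of the second paragraph: one needs the proportion of Frobenius classes with no fixed point on the fibre to be bounded below by a positive constant for infinitely many primes at once. This combines Jordan's theorem---where the hypothesis $\deg f\geq2$ is essential, a degree-one cover contributing no such classes---with a Chebotarev/Lang--Weil count whose error term is uniform in $\fp$, and it requires enough care with arithmetic versus geometric monodromy to guarantee that the relevant Frobenius classes genuinely occur for a positive density of $\fp$. Once this density gap is secured, the sieving step is essentially free, because equidistribution converts each finite truncation of the sieve into an exact asymptotic with a multiplicative constant; it is precisely the absence of such an input in general that makes the large-sieve and Selberg-sieve developments of the rest of the paper necessary.
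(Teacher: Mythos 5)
Your argument is correct and follows essentially the same route as the paper: reduce to thin sets of type I and II, use equidistribution as a finite-prime sieve, and deduce a uniform density deficit at a positive-density set of primes for type II from Jordan's theorem, Lang--Weil, and Chebotarev. The only genuine difference is that you re-derive from scratch the point-counting bound $\#(\Upsilon \bmod \fp) \le c_\Upsilon (\Norm\fp)^n + O((\Norm\fp)^{n-1/2})$ for primes splitting completely in the relevant extension, whereas the paper simply cites Serre \cite[Thm.~3.6.2]{Ser08} for this (Lemma~\ref{prop:thin}(2)), and you treat the type-I piece by a measure-concentration argument at a single archimedean place (a closed analytic subset of positive codimension is $\tau_{v_0}$-null) rather than by sieving at one large non-archimedean prime using the bound $\#Z(\FF_\fp)/\#X(\FF_\fp) \ll 1/\Norm\fp$; both are fine, and each is a ``single local condition of arbitrarily small density'' argument in disguise. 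Two minor inaccuracies to watch: equidistribution is formulated with $\tau_H(\,\cdot\, \cap X(\Adele_k)^{\Br})/\tau_H(X(\Adele_k)^{\Br})$ rather than the plain ratio $\tau(\Omega_\ve)/\tau(X(\Adele_k))$, and the exact product identity $\tau(\Omega_S) = \bigl(\prod_{\fp\in S}\mu_\fp(\Omega_\fp)\bigr)\tau(X(\Adele_k))$ only holds up to a bounded constant coming from the Brauer set and finitely many bad places; both discrepancies are absorbed into implied constants exactly as in Example~\ref{ex:prod_measures}, so the conclusion is unaffected.
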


Theorem \ref{thm:equi_thin}  recovers the well-known fact that a thin subset of $\PP^n(k)$ contains only $0\%$ of the total number of rational points of $\PP^n(k)$, when ordered by height. This special case is due to Cohen  \cite{cohen} and Serre  
 \cite[Thm.~13.3]{Ser97}.
 
\subsubsection{Fibrations}

Given a family of varieties $\pi: Y \to X$, one would like to understand how many varieties in the
family have a rational point. 
To this end, we study the following counting function
$$N(U,H,\pi,B) = \#\{ x \in U(k): H(x) \leq B, ~x \in \pi(Y(k)) \},$$
for suitable open subsets $U\subset X$.
As discovered in \cite{Lou13} and \cite{LS16}, the asymptotic behaviour of such counting functions
is controlled by the Galois action on the irreducible components of fibres over the codimension $1$ points of $X$.
We work with the following types of fibres, first defined in \cite{LSS17}.
\begin{definition}
	Let $x \in X$ with residue field $\kappa(x)$.
	We say that a fibre $Y_x=\pi^{-1}(x)$ is \emph{pseudo-split} if every element
	of $\Gal(\overline{\kappa(x)} / \kappa(x))$ fixes some multiplicity one 
	irreducible component of $Y_x \otimes \overline{\kappa(x)}$.
\end{definition}
Note that if $Y_x$ is \emph{split}, i.e.~contains a multiplicity one irreducible component which is geometrically
irreducible \cite[Def.~0.1]{Sko96}, then $Y_x$ is pseudo-split.

The large sieve was employed in \cite{LS16} to give upper bounds for
$N(\PP^n,H,\pi,B)$.
Good upper bounds are not realistic in our generality,
but we are able to obtain the following zero density result, which generalises  \cite[Thm.~1.1]{LS16}.

\begin{theorem} \label{thm:equi_pseudo_split}
	Let $X$ be an almost Fano variety over $k$ and $H$ an anticanonical height function on $X$.
	Assume that the rational points are equidistributed on some dense open subset $U \subset X$.
	Let $\pi:Y \to X$ be a proper dominant morphism with $Y$ geometrically integral and non-singular.
	Assume that there is a 
	non-pseudo-split fibre over some codimension one point of $X$. Then
	$$\lim_{B \to \infty} \frac{N(U,H,\pi,B)}{N(U,H,B)} = 0.$$	
\end{theorem}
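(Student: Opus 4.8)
The plan is to reduce the statement to a sieve over a single well-chosen prime ideal, using the equidistribution hypothesis to control the resulting local condition. The key geometric input is the existence of a non-pseudo-split fibre $Y_\eta$ over some codimension one point $\eta \in X^{(1)}$, with residue field $\kappa(\eta)$. By definition of pseudo-split, there is an element $\sigma \in \Gal(\overline{\kappa(\eta)}/\kappa(\eta))$ fixing no multiplicity-one geometric component of $Y_\eta$. Shrinking $U$, I would spread $\eta$ out to a prime divisor on a smooth model $\mathcal{X}$ over (an open subscheme of) $\Spec \fo_k$, so that for all but finitely many primes $\fp$ the reduction of this divisor is a geometrically integral variety over $\FF_\fp$ whose function-field Galois group surjects onto the decomposition/inertia data encoding $\sigma$. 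By the Lang--Weil estimates together with a Chebotarev-type count over the divisor, for a positive proportion of primes $\fp$ a positive proportion of points $\bar{x} \in \mathcal{X}(\fo_k/\fp)$ lying on this divisor have the property that the fibre $Y_{\bar{x}}$ is not split over the residue field; by Hensel/Lang--Weil such a fibre has no smooth $\fo_k/\fp$-point that lifts, hence the corresponding $x \in X(k)$ near the divisor cannot lie in $\pi(Y(k))$. Away from the divisor one simply discards everything — this is harmless since the divisor is where the obstruction lives.

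Concretely, fix a large parameter $P$ and consider primes $\fp$ with $\Norm \fp \le P$ for which the above works. For each such $\fp$ let $\Omega_\fp \subset \mathcal{X}(\fo_k/\fp)$ be the set of reductions $\bar{x}$ such that $Y_{\bar{x}}$ \emph{is} split over $\fo_k/\fp$ (so lies outside the divisor, or lies on it with ``good'' Frobenius). Every $x \in U(k)$ with $x \in \pi(Y(k))$ must satisfy $x \bmod \fp \in \Omega_\fp$ for every good $\fp$ — this is the standard specialisation argument: a rational point of $Y$ reduces to a smooth $\fo_k/\fp$-point of a fibre for almost all $\fp$, and smoothness forces the fibre to be split. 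Therefore
$$
N(U,H,\pi,B) \le \#\{ x \in U(k) : H(x) \le B, ~x \bmod \fp \in \Omega_\fp \text{ for all good } \fp \text{ with } \Norm \fp \le P\}.
$$
Now the equidistribution hypothesis applies: since $\Omega_\fp$ is a union of congruence conditions at finitely many primes, the right-hand side is asymptotic to $N(U,H,B)$ times $\prod_{\fp} (\tau_\fp(\Omega_\fp))$ as $B \to \infty$, where $\tau_\fp$ is the local Tamagawa measure of the condition. The point is that the ``bad'' locus — the divisor reductions with Frobenius acting like $\sigma$ — has local density bounded below by a constant times $1/\Norm\fp$ (one codimension of loss for the divisor, and a positive proportion from Chebotarev), so $\tau_\fp(\Omega_\fp) \le 1 - c/\Norm\fp$ for the good primes. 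Hence
$$
\limsup_{B \to \infty} \frac{N(U,H,\pi,B)}{N(U,H,B)} \le \prod_{\substack{\fp \text{ good} \\ \Norm \fp \le P}} \Bigl(1 - \frac{c}{\Norm \fp}\Bigr).
$$
Since the good primes have positive density, $\sum 1/\Norm\fp$ diverges as $P \to \infty$, so the product tends to $0$; letting $P \to \infty$ after $B \to \infty$ gives the claim.

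The main obstacle I expect is the geometric bookkeeping in the middle step: making precise, for a positive proportion of primes $\fp$ and a proportion $\gg 1/\Norm\fp$ of points on the spread-out divisor, that the specialised fibre genuinely fails to be split and therefore has no liftable smooth point. This requires (i) spreading out the morphism $\pi$ and the component structure of $Y_\eta$ over a dense open of $\Spec\fo_k$ so that the Galois action on geometric components is preserved under reduction, (ii) a Chebotarev-type equidistribution statement for Frobenius elements of points on the reduced divisor — here one uses that the relevant finite étale cover of the divisor (trivialising the components of the generic fibre of $\pi$ restricted to the divisor) is geometrically connected over the divisor, or at least has the right monodromy group, which is exactly the non-pseudo-split condition — and (iii) an effective Lang--Weil/Hensel input bounding below the number of such points with ``bad'' Frobenius by $c\,\Norm\fp^{\dim X - 1}$ out of $\sim \Norm\fp^{\dim X}$ total. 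None of these is deep individually, but assembling them uniformly in $\fp$, so that the constant $c$ above is genuinely independent of $\fp$, is the technical heart of the argument. Everything else — the passage from the sieve inequality to the limit, and the divergence of $\sum 1/\Norm\fp$ — is routine given the equidistribution property assumed in the hypotheses.
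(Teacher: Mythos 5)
Your proposal has a critical gap in the ``standard specialisation argument'': you sieve modulo~$\fp$, and claim that if $x = \pi(y)$ with $y \in Y(k)$, then $Y_{x \bmod \fp}$ must be split because $y$ reduces to a smooth $\FF_\fp$-point of the fibre. This is false. A point $y \in Y(\fo_\fp)$ reduces to a smooth point of the special fibre $\mathcal{Y}\otimes\FF_\fp$ (since $\mathcal{Y}$ is smooth over $\fo_k$ away from finitely many primes), but it need \emph{not} reduce to a smooth point of the fibre $Y_{x \bmod \fp}$ of $\pi$, because $\pi$ itself is not smooth along the bad divisor. Concretely, for the conic bundle $x^2 + y^2 = t z^2$ over $\AA^1_\ZZ$ with $p \equiv 3 \bmod 4$, the point $t = p^2$ has a $\QQ_p$-point (e.g.\ $(p:0:1)$), which reduces modulo $p$ to the singular point $(0:0:1)$ of the non-split fibre over $\bar t = 0$. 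So $x = p^2$ lies in $\pi(Y(\QQ_p))$ while $Y_{x \bmod p}$ is not split, contradicting your sieve inclusion $N(U,H,\pi,B) \le \#\{x : x \bmod \fp \in \Omega_\fp\}$. The paper flags exactly this pitfall: one must sieve modulo $\fp^2$, not $\fp$, as already observed by Serre and developed by Loughran--Smeets. The correct local condition is that if $x$ meets the bad divisor $T$ \emph{transversally} over $\fp$ (a genuine $\fp^2$-condition, see Definition~\ref{def:trans} and Proposition~\ref{prop:sparsity}) and the fibre over $x \bmod \fp$ is non-split, then $x$ has no $\fo_\fp$-point over it. Points meeting $T$ to higher order, like $t = p^2$ above, are simply not excluded, and one shows they contribute only $O(1/\Norm\fp^2)$ of the total density; the transversal non-split locus has density $\asymp 1/\Norm\fp$ modulo $\fp^2$, which is what drives the divergent product.

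A secondary omission: your argument assumes the generic fibre of $\pi$ is geometrically integral. Even with $Y$ geometrically integral, the generic fibre may be geometrically disconnected, and the paper first dispenses with this case by Stein factorisation, reducing it to the thin-set result Theorem~\ref{thm:equi_thin}. The surrounding machinery in your sketch (spreading out, Lang--Weil, Chebotarev for the component monodromy, taking the product over primes and letting $P\to\infty$) is the right skeleton and matches the paper's Proposition~\ref{thm:p^2}, but without the passage to $\fp^2$-conditions and the transversality input the sieve inequality itself is invalid.
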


\subsubsection{Friable integral points}
Friable numbers are a fundamental  tool in analytic number theory.
A comprehensive survey on what is known about their distribution 
can be found in  \cite{ten}. We 
introduce the following notion of friable integral points.
(Note that, as we are working in a geometric setting,
it is preferable to use
the term ``friable'' over  ``smooth''.)

\begin{definition}
	Let $X$ be a finite type scheme over $\oo_k$ and $Z \subset X$ a closed subscheme.
	For $y>0$, we say that an integral point $x \in X(\oo_k)$ is {\em $y$-friable with respect to $Z$}
	if all non-zero prime ideals $\fp\subset \fo_k$ with $ x \bmod \fp \in Z$ satisfy $\Norm \fp \leq y$.
\end{definition}

One recovers the usual notion of a $y$-friable number by taking $X = \mathbb{A}^1_{\ZZ}$ and $Z$
to be the origin. Allowing different subschemes $Z$ is also very natural:
given a polynomial $f \in \ZZ[x]$, a $y$-friable integral point of $\mathbb{A}^1_{\ZZ}$
with respect to the subscheme $Z= \{ f(x) = 0\}$ is an integer $x$ such that   $f(x)$ is $y$-friable. Lagarias and Soundararajan \cite[Thm.~1.4]{sound} have investigated the case of the linear equation $X=\{x_1+x_2=x_3\}\subset \AA_\ZZ^3$, with $Z=\{x_1x_2x_3=0\}$. Given $c>8$, they assume GRH and succeed in proving that there are infinitely many (primitive) $y$-friable integral points with respect to $Z$, for any 
$y> (\log \max\{|x_1|,|x_2|,|x_3|\})^{c}$. (An unconditional version of this result is available in recent work of Harper \cite{harper}, for $c$ large enough.)
We can give the following  zero density result in our setting.

\begin{theorem} \label{thm:equi_friable}
	Let $X$ be an almost Fano variety over $k$ and $H$ an anticanonical height function on 
	$X$. Assume that the rational points are equidistributed on some dense open subset 
	$U \subset X$. Let $y$ be fixed and $Z \subset X$ a divisor. 
	Let $\mathcal{X}$ be a model of $X$ over $\fo_k$ and $\mathcal{Z}$ 
	the closure of $Z$ in $\mathcal{X}$. Then 
	$$\lim_{B \to \infty} \frac{\#\{x \in U(k): H(x) \leq B, ~x \text{ is $y$-friable with respect to } \mathcal{Z} \}}{N(U,H,B)} = 0.$$	
\end{theorem}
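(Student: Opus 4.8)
The plan is to use the principle recorded above, that equidistribution permits exact sieving by any \emph{finite} set of primes, and then to pass to a limit over such sets. I take $\mathcal{X}$ to be proper (e.g.\ projective) over $\fo_k$, as is implicit in the statement, so that $X(k)=\mathcal{X}(\fo_k)$ and the reduction $x\bmod\fp\in\mathcal{X}(\fo_k/\fp)$ is defined for every $x\in U(k)$ and every non-zero prime $\fp$. I also record at the outset that, $Z$ being a non-zero divisor, the base change $Z_{\bar k}$ has a geometrically irreducible component $W$ of dimension $\dim X-1$, whose field of definition is some finite extension $L/k$; this $W$ will be the source of the divergence at the end.

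The first step is the sieve inequality. Let $S$ be a finite set of non-zero primes of $\fo_k$, each of norm $>y$. If $x\in U(k)$ is $y$-friable with respect to $\mathcal{Z}$, then by the definition of friability $x\bmod\fp\notin\mathcal{Z}(\fo_k/\fp)$ for all $\fp\in S$, so
$$
\#\{x\in U(k):H(x)\le B,\ x\ \text{is $y$-friable w.r.t.\ }\mathcal{Z}\}\ \le\ \#\{x\in U(k):H(x)\le B,\ x\bmod\fp\notin\mathcal{Z}\ \text{for all }\fp\in S\}.
$$
The condition on the right is clopen in $X(\Adele_k)$ and involves only the finitely many places of $S$, so the equidistribution hypothesis (see \S\ref{sec:Tamagawa}) gives that the right-hand side is $\sim\bigl(\prod_{\fp\in S}\tau_\fp\bigr)\,N(U,H,B)$ as $B\to\infty$, where $\tau_\fp$ is the local density at $\fp$ of the set $V_\fp$ of $\fp$-adic points not reducing into $\mathcal{Z}$ modulo $\fp$ (the global and convergence factors of the Tamagawa measure cancel in this ratio). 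Dividing by $N(U,H,B)$ and letting $B\to\infty$, I obtain
$$
\limsup_{B\to\infty}\frac{\#\{x\in U(k):H(x)\le B,\ x\ \text{is $y$-friable w.r.t.\ }\mathcal{Z}\}}{N(U,H,B)}\ \le\ \prod_{\fp\in S}\tau_\fp
$$
for \emph{every} such $S$.

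The second step is to show that the right-hand side can be made arbitrarily small, i.e.\ that $\sum_\fp(1-\tau_\fp)=\infty$ when the sum runs over primes of norm $>y$. For all but finitely many $\fp$ (those of bad reduction, together with the finitely many where the local component of the equidistribution measure is not the full local Tamagawa measure), one has $1-\tau_\fp=\#\mathcal{Z}(\fo_k/\fp)/\#\mathcal{X}(\fo_k/\fp)$, and Lang--Weil gives $\#\mathcal{X}(\fo_k/\fp)\ll\Norm\fp^{\dim X}$ since $X$ is geometrically integral. For the primes $\fp$ splitting completely in $L$ --- a set $\mathcal{P}$ of positive density by the Chebotarev density theorem --- the component $W$ reduces, for all but finitely many such $\fp$, to a geometrically irreducible subvariety of $\mathcal{Z}\bmod\fp$ of dimension $\dim X-1$, so Lang--Weil also gives $\#\mathcal{Z}(\fo_k/\fp)\gg\Norm\fp^{\dim X-1}$, whence $1-\tau_\fp\gg\Norm\fp^{-1}$. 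Since $\sum_{\fp\in\mathcal{P}}\Norm\fp^{-1}=\infty$, letting $S$ exhaust $\{\fp\in\mathcal{P}:\Norm\fp>y\}$ in the last display forces the $\limsup$, and hence the limit, to be $0$.

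The routine ingredients are the sieve inequality and the final limiting argument; the step I expect to need the most care is the lower bound $1-\tau_\fp\gg\Norm\fp^{-1}$ along a positive-density set of primes. This combines (i) the identification, for all but finitely many $\fp$, of the relevant local density with the point count $\#\mathcal{Z}(\fo_k/\fp)/\#\mathcal{X}(\fo_k/\fp)$; (ii) a Chebotarev argument producing a positive-density set of primes over which a fixed top-dimensional geometric component of $Z$ is individually defined and remains geometrically irreducible upon reduction; and (iii) Lang--Weil estimates for $X$ and for that component. Granting these, the divergence, and hence the theorem, follows at once.
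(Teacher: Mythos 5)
Your proposal is correct and its first half coincides with the paper's: both reduce, via the equidistribution hypothesis and the local density computation (Example \ref{ex:prod_measures}), to showing that a certain truncated Euler product over primes of norm $>y$ tends to $0$. Where you diverge is in establishing that the product vanishes in the limit. The paper's Lemma \ref{lem:Z} appeals to a quantitative form of Chebotarev + Lang--Weil (namely \cite[Cor.~7.13]{Ser12}) to get the precise asymptotic $\sum_{\Norm\fp<z}\#\mathcal{Z}(\FF_\fp)=r(Z)z^{n-1}/\log(z^{n-1})+O(z^{n-1}/(\log z)^2)$, hence the sharp rate $\prod_{y<\Norm\fp<z}(1-\#\mathcal{Z}(\FF_\fp)/\#\mathcal{X}(\FF_\fp))\asymp(\log y/\log z)^{r(Z)}$. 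You instead fix one top-dimensional geometric component $W$ of $Z$ with field of definition $L/k$, invoke Chebotarev only to produce a positive-density set $\mathcal{P}$ of primes split in $L$, use Lang--Weil to get $\#\mathcal{Z}(\FF_\fp)\gg\Norm\fp^{n-1}$ for $\fp\in\mathcal{P}$, and conclude divergence of $\sum(1-\tau_\fp)$. For the zero-density statement this bare divergence is all that is needed, and your route is somewhat more self-contained (it does not require the explicit leading term of Serre's density result); what it gives up is the exact exponent $r(Z)$, which the paper needs later in the quadric refinement (Theorem \ref{thm:quadrics_friable}). One small caution: the claim that the sieved count is $\sim(\prod_{\fp\in S}\tau_\fp)N(U,H,B)$ should really be $\ll$, since equidistribution identifies the ratio with a Tamagawa measure of a set cut out by local conditions together with the global Brauer--Manin condition, and these only factor as a clean product away from a finite bad set; the $\ll$ (as in Example \ref{ex:prod_measures}) is exactly what you need and avoids this subtlety. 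Likewise the identification of $1-\tau_\fp$ with $\#\mathcal{Z}(\FF_\fp)/\#\mathcal{X}(\FF_\fp)$ holds only for $\fp$ outside a finite exceptional set, which you do acknowledge but should bear in mind when choosing $S\subset\mathcal{P}$.
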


Here,  a \emph{model} is a proper scheme $\mathcal{X} \to \Spec \fo_k$
whose generic fibre is isomorphic to $X$. A model can be obtained, for example, by choosing an embedding $X \subset \PP^n_k$ and letting $\mathcal{X}$ be the closure of $X$ inside $\PP^n_{\fo_k}$. 
Note that in 
Theorem \ref{thm:equi_friable}
 we view $x \in U(k) \subset \mathcal{X}(\fo_k)$, so that its reduction $x \bmod \fp \in \mathcal{X}(\FF_\fp)$ is well-defined.
 
It is crucial in Theorem \ref{thm:equi_friable} that $Z$ be a divisor; the conclusion can fail for higher codimension subvarieties. 
For an example of this phenomenon in the affine setting, consider  $\mathcal{X} = \mathbb{A}^2_\ZZ$ and $\mathcal{Z}$ the origin. Then a $y$-friable integral point with respect to $\mathcal{Z}$ is a pair
of integers $(x_1,x_2)$ whose greatest common divisor is $y$-friable; clearly a positive proportion
of all pairs of integers satisfy this property.

\subsection{Sieving on quadrics}

In many cases it is possible to get 
quantitatively stronger versions of the previous results. We pursue this for smooth quadric hypersurfaces, but we
expect that results of a similar flavour go through for hypersurfaces of arbitrary  degree.  The advantage of  quadrics 
is that sharper bounds are available through the  smooth $\delta$-function variant of the Hardy--Littlewood circle method. 
Note that smooth quadric hypersurfaces are flag varieties; 
hence they are Fano and have equidistribed rational points \cite[\S6.2.4]{Pey95}.

For the remainder of this section $X \subset \PP_\QQ^n$ 
is a smooth quadric hypersurface of dimension at least $3$ over $\QQ$
and  $H:\PP^{n}(\QQ)\to \RR$ is the standard exponential height function associated to the supremum norm. There is a natural choice of model $\mathcal{X}$ given by the closure of $X$ inside $\PP_\ZZ^n$; we shall abuse notation and write $X(\ZZ) = \mathcal{X}(\ZZ)$ and 
$X(\ZZ/m\ZZ) = \mathcal{X}(\ZZ/m\ZZ)$.

\subsubsection{A version of the Selberg sieve}

Our fundamental tool will be a version of the Selberg sieve for rational points on 
quadrics.
Let $m\in \NN$ be fixed once and for all.
For each prime  $p$ we suppose that we are given a 
non-empty  set of residue classes $\Omega_{p^m}\subseteq X(\ZZ/p^m\ZZ)$. Our goal is to measure the density of points  $x\in X(\QQ)$ whose reduction modulo $p^m$ lands in $\Omega_{p^m}$ for each prime $p$. Namely, we are interested in the behaviour of the  counting function
$$
N(X,H,\Omega, B)=
\# \{ x \in X(k) : H(x) \leq B, ~x \bmod p^m \in \Omega_{p^m} \mbox{ for all } p\}
$$
as $B\to \infty$, where $\Omega = (\Omega_{p^m})_p$.
This
has order of magnitude $B^{n-1}$ when  $\Omega_{p^m}=X(\ZZ/p^m\ZZ)$ for all $p$, but we expect it to be significantly smaller when 
$\Omega_{p^m} $ is a proper subset of  $X(\ZZ/p^m\ZZ)$ for many primes $p$.
We define the  density function
\begin{equation}\label{eq:omega}
\omega_p=1-\frac{\# \Omega_{p^m}}{\#X(\ZZ/p^m\ZZ)} \in [0,1],
\end{equation} 
for any prime $p$. 
The following is our main result for quadrics.

 \begin{theorem}\label{thm:Selberg}
 Assume that $X\subset \PP^{n}$ is a smooth quadric of dimension at least $3$ over $\QQ.$
Let $m\in \NN$ and 
let $\Omega_{p^m}\subseteq X(\ZZ/p^m\ZZ)$  
for each prime $p$. Assume that 
$$
0\leq  \omega_p<1, \quad \text{ for all $p$}.
$$
Then,
for any $\xi\geq 1$ and any $\ve>0$, we have
 $$
 N(X,H,\Omega, B)
\ll_{\ve,X}
\frac{B^{n-1}}{G(\xi)}
+\xi^{m(n+1)+2+\ve} B^{(n+1)/2+\ve},
$$
where
$
G(\xi)=\sum_{\substack{k<\xi}}
\mu^2(k)\prod_{p\mid k} 
\left(\frac{\omega_p}{1-\omega_p}\right).
$
 \end{theorem}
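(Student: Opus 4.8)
The plan is to implement the Selberg sieve weights on the quadric by introducing, for a squarefree parameter $k < \xi$, real numbers $\lambda_k$ with $\lambda_1 = 1$, and considering the sum
\[
\sum_{\substack{x \in X(\QQ) \\ H(x) \le B}} \Biggl( \sum_{k \mid \text{``}x\text{''}} \lambda_k \Biggr)^2,
\]
where the inner condition $k \mid x$ should be read as ``$x \bmod p^m \notin \Omega_{p^m}$ for every $p \mid k$''. Since $\omega_p < 1$ for all $p$, a point counted in $N(X,H,\Omega,B)$ satisfies $k \nmid x$ for all $k > 1$, so the inner sum equals $\lambda_1 = 1$; hence $N(X,H,\Omega,B)$ is bounded above by the displayed quadratic form. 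Expanding the square, we are led to estimate, for each pair $k_1, k_2 < \xi$, the count of $x \in X(\QQ)$ with $H(x) \le B$ and $x \bmod p^m \notin \Omega_{p^m}$ for all $p \mid \mathrm{lcm}(k_1,k_2)$. Writing $k = \mathrm{lcm}(k_1,k_2)$ (so $k < \xi^2$ and $k^m < \xi^{2m}$), this is a congruence count on the quadric modulo $k^m$, with the relevant residue classes being the complement of $\Omega_{p^m}$ at each $p \mid k$ and all of $X(\ZZ/p^m\ZZ)$ at $p \nmid k$.

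The analytic heart is an asymptotic for such congruence counts, which is where the circle method enters. Using the smooth $\delta$-function version of the Hardy--Littlewood method for the quadratic form defining $X$ — valid since $\dim X \ge 3$, i.e.\ at least $5$ variables — I would establish that for any $q \in \NN$ and any residue class condition ``$x \bmod q \in \Lambda$'' (a union of classes modulo $q$),
\[
\#\{x \in X(\QQ) : H(x) \le B, \ x \bmod q \in \Lambda\}
= c\,\frac{\#\Lambda}{\#X(\ZZ/q\ZZ)}\,\cdot\, q^{?}\cdots B^{n-1} + O\bigl(q^{A} B^{(n+1)/2+\ve}\bigr)
\]
for a suitable absolute exponent $A$, the main term factoring as a product of a real density, a $p$-adic density at each $p \mid q$, and an archimedean volume. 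The key point is a \emph{uniform} error term that is polynomial in $q$; the smooth $\delta$-method delivers exactly this, with the main term of size $B^{n-1}$ and a genuine power saving in $B$, the $q$-dependence of the error being controllable because the relevant exponential sums modulo $q$ obey square-root-type bounds for quadratic forms. Applying this with $q = k^m$, $k < \xi^2$, produces a main term proportional to $\prod_{p \mid k}\bigl(\#(X(\ZZ/p^m\ZZ) \setminus \Omega_{p^m})/\#X(\ZZ/p^m\ZZ)\bigr) = \prod_{p\mid k}\omega_p$ (after checking the local densities are multiplicative and that the density of the complement at $p$ is exactly $\omega_p$, using smoothness to guarantee $X(\ZZ/p^m\ZZ)$ has the expected size), times $B^{n-1}$, plus an error $O(\xi^{2m\cdot A} B^{(n+1)/2+\ve})$ per pair $(k_1,k_2)$.

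The diagonalisation then proceeds as in the classical Selberg sieve: the main-term contribution to the quadratic form is $c B^{n-1} \sum_{k_1,k_2 < \xi} \lambda_{k_1}\lambda_{k_2} \prod_{p \mid \mathrm{lcm}(k_1,k_2)}\omega_p$; writing $g(k) = \prod_{p\mid k}\omega_p$ and performing the standard change of variables (Möbius inversion to pass to a diagonal form indexed by a new variable), the quadratic form in the $\lambda$'s is minimised, subject to $\lambda_1 = 1$, with minimum value $1/G(\xi)$, where $G(\xi) = \sum_{k < \xi}\mu^2(k)\prod_{p\mid k}\frac{g(k/\cdots)}{\cdots} = \sum_{k<\xi}\mu^2(k)\prod_{p\mid k}\frac{\omega_p}{1-\omega_p}$ after simplification; this uses $\omega_p < 1$ so that the quantities $\omega_p/(1-\omega_p)$ are finite and the form is positive definite. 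The error-term contribution is $O\bigl(\bigl(\sum_{k<\xi}|\lambda_k|\bigr)^2 \xi^{2mA} B^{(n+1)/2+\ve}\bigr)$; bounding $|\lambda_k| \ll \xi^{\ve}$ (the standard bound for Selberg weights, using $g$ multiplicative with $g \le 1$) gives $\sum_{k<\xi}|\lambda_k| \ll \xi^{1+\ve}$, so the total error is $\ll \xi^{2mA + 2 + \ve} B^{(n+1)/2+\ve}$; matching the claimed exponent $m(n+1) + 2$ then fixes $A = (n+1)/2$, which is precisely the exponent of $q$ one expects from the singular-series and singular-integral analysis of a quadratic form in $n+1$ variables.

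The main obstacle is the second step: proving the congruence count with a main term of exact size $B^{n-1}$ and an error term of the shape $q^{(n+1)/2} B^{(n+1)/2 + \ve}$, uniformly in the modulus $q$ and in the choice of residue classes. This requires a careful bookkeeping of the $q$-dependence in the smooth $\delta$-method — in particular, uniform estimates for the complete exponential sums $S_q(\cdots) = \sum_{a \bmod q} \cdots e((\cdots)/q)$ attached to the quadratic form twisted by the characteristic function of $\Lambda$, and control of the resulting arithmetic factor and its truncation. One must also verify that the local densities produced are exactly the naive ones $\#\Lambda_p / \#X(\ZZ/p^m\ZZ)$ (so that the product telescopes into $\prod\omega_p$), which relies on the smoothness of $X$ ensuring good reduction and the absence of any anomalous contribution to the singular series at the bad primes dividing the discriminant; since $m$ is fixed and only finitely many $p$ are bad, these are absorbed into the implied constant $\ll_{\ve,X}$.
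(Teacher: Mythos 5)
Your sieve skeleton coincides with the paper's: an upper-bound Selberg sieve in which the ``divisibility'' $p\mid x$ means $x\bmod p^m\notin\Omega_{p^m}$, the main-term quadratic form is diagonalised to give $1/G(\xi)$, and the remainder terms are congruence counts on the quadric to moduli $\prod_{p\mid d}p^m$ with $d$ squarefree, $d<\xi^2$. (The paper does exactly this, passing first to primitive integer vectors on the affine cone with a smooth dyadic weight, and then quoting the sieve in the sequence form of Friedlander--Iwaniec, Theorem 7.1, rather than redoing the diagonalisation; your bound $|\lambda_k|\ll\xi^{\ve}$ is harmless, since in fact $|\lambda_k|\le 1$.) The exponent bookkeeping $\xi^{m(n+1)+2+\ve}B^{(n+1)/2+\ve}$ also matches the paper's.

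The genuine gap is the step you yourself flag as the main obstacle: the asymptotic for the congruence count, uniform both in the modulus $q$ and in an \emph{arbitrary} union of residue classes, with error $O(q^{(n+1)/2+\ve}B^{(n+1)/2+\ve})$. This is the paper's Theorem \ref{t:circle}, and it occupies the bulk of the technical work; in your proposal it is asserted, with the exponent $A=(n+1)/2$ fixed by matching the target bound rather than derived, which is circular as it stands. Moreover your stated justification --- that ``the exponential sums modulo $q$ obey square-root-type bounds for quadratic forms'' --- does not suffice: because the local sets $\Omega_{p^m}$ (or their complements) are arbitrary, the complete sums $\sum_{a}\sum_{\y}e_q(aF(\y)+\c\cdot\y)$ restricted to $\y$ in these classes carry no Gauss-sum structure at the part of the modulus supporting the congruence conditions, and pointwise square-root cancellation is false there. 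The paper instead factorises $S_{q,M}(\c)$ by the Chinese remainder theorem into a piece $S_u(\c)$ with $(u,M)=1$ (where Heath-Brown's bounds and their mean-square refinement apply), a square-full piece estimated by a Weyl-type argument, and a ``characteristic function'' piece $K_{M_2}(\overline{uv_1v_2}\,\c)=\sum_{\y\in\Omega_{M_2}}e_{M_2}(\overline{uv_1v_2}\,\c\cdot\y)$; the multiplicative inverse of $u$ blocks direct summation over $u$, so the two factors are decoupled by Cauchy--Schwarz and the $K$-factor is controlled only on average over $\c$ via orthogonality, together with the truncation $|\c|\ll M'B^{\ve}$ coming from the oscillatory integrals. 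None of this appears in your sketch, and without it the error term your sieve requires is unproved. Secondary, fixable omissions: the reduction from rational points of bounded height to primitive integral vectors with smooth weights (reductions mod $p^m$ only make sense for primitive representatives), and the primes dividing $2\Delta_F$, which the uniform circle-method statement excludes from the modulus and which must be dealt with separately in the sieve.
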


The implied constant in this upper bound is allowed to depend on the choice of $\ve$ and the quadric $X$. 
In order to prove  Theorem \ref{thm:Selberg}
we shall use Heath-Brown's version 
\cite{HB} of the  circle method
to study the distribution of zeros of isotropic quadratic forms
that are constrained to lie in a fixed set of congruence classes. The main result, 
Theorem \ref{t:circle},  is uniform in the modulus and may be of independent interest. Once combined with the Selberg sieve, 
it easily leads to the statement of Theorem \ref{thm:Selberg}. 
In fact, although  not  the focus of our present investigation,  
Theorem \ref{t:circle} gives an effective strong approximation result which could 
also be fed into  lower bound sieves, in the spirit of 
 work by Nevo and Sarnak \cite{NS} on the affine linear sieve for homogeneous spaces. 
Finally, by appealing to work of Browning and Vishe \cite{BV} instead of \cite{HB}, we remark that it would be possible 
to obtain a version of Theorem \ref{thm:Selberg} over arbitrary number fields, and to extend the results in the next section to a similar level of generality.

\subsubsection{Applications} 
We now give some applications of Theorem \ref{thm:Selberg}, 
which serve to strengthen  the results in  \S\ref{sec:equi_intro} 
for  smooth quadrics $X\subset \PP^n$ of dimension at least $3$ which are defined over $\QQ$.

To begin with, an old result of 
Cohen  \cite{cohen} and Serre  
 \cite[Thm.~13.3]{Ser97} gives a quantitative improvement of Theorem \ref{thm:equi_thin} when $X$ is projective space.  
The following result extends this to quadrics. 
 
\begin{theorem} \label{thm:quadrics_thin}
	Let $\Upsilon \subset X(\QQ)$ be a thin set. Then there exists $\delta_n > 0$ such that
	$$
	\#\{x\in \Upsilon: H(x)\leq B\} \ll_{\Upsilon,X} B^{n-1-\delta_n}.$$
\end{theorem}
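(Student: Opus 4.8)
The plan is to bound the count for $\Upsilon$ by a sum of counts attached to the two elementary types of thin set, and to treat each of those by the Selberg-type estimate of Theorem~\ref{thm:Selberg}, used with $m=1$ and with the sieving parameter $\xi$ taken to be a small power of $B$. By the structure theory of thin sets \cite{Ser08} we may write $\Upsilon$ as a finite union of sets of two kinds: \emph{(I)} $\Upsilon'\subseteq Z(\QQ)$ for a proper closed subset $Z\subsetneq X$, necessarily of dimension $\le n-2$; and \emph{(II)} $\Upsilon'\subseteq \pi(Y(\QQ))$ for a dominant generically finite morphism $\pi\colon Y\to X$ of degree $d\ge 2$ with $Y$ irreducible over $\QQ$. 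After the usual reductions --- passing to the normalisation of $X$ in $\QQ(Y)$, which is finite over $X$ since $X$ is normal, and then shrinking --- we may assume in case (II) that $Y$ is proper over $\QQ$ and that $\pi$ is finite and étale over a dense open $X^{\circ}\subseteq X$ (write $Y^{\circ}=\pi^{-1}(X^{\circ})$), the complement $X\setminus X^{\circ}$ being absorbed into a set of type (I). It suffices to bound the contribution of each such piece by $O_{\Upsilon,X}(B^{\,n-1-\delta_n})$ for a fixed $\delta_n>0$.

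For each piece I will produce a system $\Omega=(\Omega_p)_p$ with $0\le\omega_p<1$ for all $p$, with $\omega_p\gg_{\Upsilon}1$ for all $p$ in a set of primes of positive density, and such that the piece injects into $\{x\in X(\QQ):H(x)\le B,\ x\bmod p\in\Omega_p\ \forall p\}$; the bound $N(X,H,\Omega,B)$ of Theorem~\ref{thm:Selberg} then dominates it. In case (I) put $\Omega_p:=\mathcal{Z}(\FF_p)$ for all $p$, where $\mathcal{Z}$ is the closure of $Z$ in $\mathcal{X}\subseteq\PP^n_{\ZZ}$; we may assume each $\mathcal{Z}(\FF_p)$ is non-empty, as otherwise $Z(\QQ)=\emptyset$. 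Since $\dim\mathcal{Z}_{\FF_p}\le n-2$ the Lang--Weil bound gives $\#\Omega_p\ll p^{n-2}$ uniformly, so $\omega_p=1-\#\Omega_p/\#X(\FF_p)\ge\tfrac12$ for all large $p$, while every $x\in Z(\QQ)$ reduces into $\mathcal{Z}(\FF_p)$ modulo every $p$. In case (II) the input is a Frobenius argument: let $G$ be the arithmetic monodromy group of the cover $Y^{\circ}\to X^{\circ}$ acting on the degree-$d$ fibre. Since $Y$ is $\QQ$-irreducible $G$ is transitive, and since $d\ge 2$ Jordan's theorem furnishes a fixed-point-free element of $G$; combined with the Lang--Weil/Chebotarev equidistribution of Frobenius, this shows that for a positive proportion of primes $p$ the fibre $\mathcal{Y}^{\circ}_x$ has no $\FF_p$-point for a proportion of $x\in\mathcal{X}^{\circ}(\FF_p)$ that is bounded away from $0$. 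For such $p$ set $\Omega_p:=\bigl(\mathcal{X}(\FF_p)\setminus\mathcal{X}^{\circ}(\FF_p)\bigr)\cup\{x\in\mathcal{X}^{\circ}(\FF_p):\mathcal{Y}^{\circ}_x(\FF_p)\neq\emptyset\}$, and $\Omega_p:=X(\ZZ/p\ZZ)$ otherwise; then $\omega_p\gg_{\Upsilon}1$ for those $p$, and $0\le\omega_p<1$ always. Moreover, since $Y$ is proper over $\QQ$ every $y\in Y^{\circ}(\QQ)$ extends to an integral point over some $\ZZ[1/N]$ over which everything is spread out, so $\pi(y)\bmod p$ lies in $\Omega_p$ for every $p$.

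In both cases, restricting the sum defining $G(\xi)$ to prime $k$ and using $\omega_p\gg_{\Upsilon}1$ on a set of primes of positive density gives
$$
G(\xi)=\sum_{k<\xi}\mu^2(k)\prod_{p\mid k}\frac{\omega_p}{1-\omega_p}\ \ge\ \sum_{p<\xi}\frac{\omega_p}{1-\omega_p}\ \gg_{\Upsilon}\ \frac{\xi}{\log\xi}.
$$
Inserting this into Theorem~\ref{thm:Selberg} with $m=1$ and taking $\xi=B^{\eta}$ for $\eta>0$ yields, for every $\varepsilon>0$,
$$
N(X,H,\Omega,B)\ \ll_{\varepsilon,X}\ B^{n-1}\xi^{-1+\varepsilon}+\xi^{\,n+3+\varepsilon}B^{(n+1)/2+\varepsilon}\ =\ B^{\,n-1-\eta(1-\varepsilon)}+B^{\,\eta(n+3+\varepsilon)+(n+1)/2+\varepsilon}.
$$
Since $X$ has dimension $n-1\ge3$ we have $n\ge4$, hence $(n+1)/2<n-1$ with a gap of $(n-3)/2>0$; choosing $\eta$ small, say $\eta=(n-3)/(4(n+3))$, and then $\varepsilon$ small, both exponents fall strictly below $n-1$, and taking $\delta_n$ to be the smaller of the two resulting deficits completes the proof after summing over the finitely many pieces.

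I expect the main obstacle to be the case (II) construction: setting up a model in which membership of $\pi(Y(\QQ))$ is genuinely detected by the congruences $x\bmod p\in\Omega_p$ (this is what forces both the passage to a finite étale cover over a dense open and the properness of $Y$), together with the geometric input that a cover of degree $\ge2$ really does omit a positive proportion of residues modulo a positive-density set of primes --- which rests on Jordan's theorem and Frobenius equidistribution, exactly as in the proof of Theorem~\ref{thm:equi_pseudo_split} but now used quantitatively. The remaining steps --- the lower bound for $G(\xi)$ and the choice of $\xi$ --- are routine given Theorem~\ref{thm:Selberg}, the inequality $(n+1)/2<n-1$ for $n\ge4$ being precisely what renders its error term harmless.
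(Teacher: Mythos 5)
Your proof is correct and follows essentially the same route as the paper: reduce to thin sets of types I and II, observe that $\omega_p$ is bounded away from $0$ on a set of primes of positive density, lower-bound $G(\xi)$, and apply Theorem~\ref{thm:Selberg} with $\xi$ a small power of $B$. The only variations are cosmetic --- where the paper cites Serre's theorem (Lemma~\ref{prop:thin}(2)) to bound $\#(\Upsilon \bmod p)$ directly, you re-derive its content via Jordan's theorem, Chebotarev, and spreading out over $\ZZ[1/N]$, and your coarser $G(\xi)\gg\xi/\log\xi$ (restricting to prime $k$) plays the role of the paper's Wirsing-based $G(\xi)\gg\xi^{1-\varepsilon}$, which yields the sharper $\delta_n<\tfrac12-\tfrac{7}{2(n+4)}$.
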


We shall see in 
\S \ref{s:thin} that any $\delta_n<\frac{1}{2}-\frac{7}{2(n+4)}$ is admissible. In particular,  $\delta_n$ approaches $\frac{1}{2}$ as $n\to \infty$, which is the  saving recorded in \cite[Thm.~13.3]{Ser97}.
A well-known application of the latter result is that almost all integer polynomials $f$ of degree $n$
have Galois group the symmetric group $S_n$. (Here we define $\Gal(f)$ to be the Galois group of the splitting field of $f$ over $\QQ$.) Theorem~\ref{thm:quadrics_thin} yields a similar application, but where the coefficients run over a thinner set.

\begin{example}
	Let $n \geq 4$. We claim that
	$$
	\#\left\{
	f(x) = a_nx^n + \dots + a_0 \in \ZZ[x] : 
	\begin{array}{ll}
		|a_i| \leq B, \Gal(f) \neq S_n, \\
		2a_n^2 = a_{n-1}^2 + \dots + a_0^2 
	\end{array}
	\right\}  \ll B^{n-1-\delta_n}.
	$$
	To see this, note that the polynomial $x^n - x^{n-1} - 1$ lives in this family and is irreducible with Galois group $S_n$, by
 the remarks at the end of \cite[\S4.4]{Ser08}.
	This implies that the generic Galois group in the family is also $S_n$. Hilbert's irreducibility theorem \cite[Thm.~3.3.1]{Ser08}
	now implies that the Galois group becomes strictly smaller only on some thin subset of the set of rational points on the associated quadric
	hypersurface. The claim now follows easily from Theorem~\ref{thm:quadrics_thin}.
\end{example}

Our next result concerns fibrations. We extend  
 \cite[Thm.~1.2]{LS16}, 
in which  the base is $\PP^n$, to a result involving quadric hypersurfaces.  
To state the result, we recall the definition of the $\Delta$-invariants from 
\cite{LS16}.
Let $\pi:Y \to Z$ be a dominant map of non-singular proper varieties
over a number field $k$ with geometrically integral
generic fibre.
 For each codimension $1$ point $D \in Z^{(1)}$, the
absolute Galois group $\Gal(\overline{\kappa(D)}/\kappa(D))$ of the residue field of $D$
acts on the irreducible components of $\pi^{-1}(D) \otimes \overline{\kappa(D)}$;
we choose a finite subgroup $\Gamma_D(\pi)$ through which
this action factors. 
As in \cite[Eq.~(1.4)]{LS16}, we then define
$\delta_D(\pi)=\#\Gamma_D^\circ(\pi)/\#\Gamma_D(\pi)$, where
$\Gamma_D^\circ(\pi)$ is the set of 
$\gamma \in \Gamma_D(\pi)$ which fix some multiplicity 1  irreducible component of  $\pi^{-1}(D) \otimes \overline{\kappa(D)}$.
Let
\begin{equation} \label{def:Delta}
	\Delta(\pi) = \sum_{D \in Z^{(1)}}( 1 - \delta_D(\pi) ).
\end{equation}
For the next two results we recall our assumption that 
 $X\subset \PP^n$ is a
   smooth quadric of dimension at least $3$ which is defined over $\QQ$.
We shall deduce the following 
result from Theorem \ref{thm:Selberg}.

\begin{theorem} \label{thm:quadrics_Delta}
	Let $\pi:Y \to X$ be a dominant proper map with geometrically integral
	generic fibre and $Y$ non-singular. Then
	$$
	N(X,H,\pi,B) \ll_{X,\pi} \frac{B^{n-1}}{(\log B)^{\Delta(\pi)}}.
	$$	
\end{theorem}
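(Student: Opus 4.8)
The plan is to deduce Theorem~\ref{thm:quadrics_Delta} from the Selberg sieve bound of Theorem~\ref{thm:Selberg} by choosing the local conditions $\Omega_{p^m}$ to detect, for each prime $p$ of good reduction, the points $x \in X(\QQ)$ whose fibre $Y_x$ fails to be pseudo-split over $\FF_p$. First I would fix a model $\pi_{\fo}: \mathcal{Y} \to \mathcal{X}$ over $\ZZ[1/N]$ for a suitable $N$ (discarding the finitely many bad primes, which affect only the implied constant), spreading out so that $\mathcal{Y} \to \mathcal{X}$ is proper with geometrically integral generic fibre and so that the components of the fibres over codimension one points, together with their Galois action, are controlled. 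The key geometric input, already used in \cite{LS16}, is a Lang--Weil plus Chebotarev argument: for $p \nmid N$, the proportion of $x \in X(\FF_p)$ whose fibre $\mathcal{Y}_x$ is pseudo-split over $\FF_p$ — equivalently, for which the Frobenius at $p$ lands in $\bigcup_D \Gamma_D^\circ(\pi)$ when $x$ specialises into $D$ — is, up to $O(p^{-1/2})$, equal to $1 - \Delta(\pi)/p + O(p^{-2})$ summed appropriately over the codimension one points. Hence, taking $m$ minimal such that the condition is detectable modulo $p^m$ (one checks $m=1$ suffices after enlarging $N$, by smoothness of the relevant strata), and letting $\Omega_{p^m}$ be the preimage of the non-pseudo-split locus, we get
$$
\omega_p = 1 - \frac{\#\Omega_{p^m}}{\#X(\ZZ/p^m\ZZ)} = 1 - \frac{\Delta(\pi)}{p} + O\!\left(\frac{1}{p^{3/2}}\right).
$$
Crucially, $N(X,H,\pi,B)$ is bounded above by $N(X,H,\Omega,B)$ plus the contribution of the bad primes and of points lying on the discriminant locus, both of which are $O(B^{n-1}/(\log B)^{A})$ for any $A$ by trivial estimates or by a separate application of the sieve.

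Next I would estimate $G(\xi) = \sum_{k < \xi} \mu^2(k) \prod_{p \mid k} \bigl(\tfrac{\omega_p}{1-\omega_p}\bigr)$ from below. Since $\tfrac{\omega_p}{1-\omega_p} = \tfrac{p - \Delta(\pi) + O(p^{-1/2})}{\Delta(\pi) + O(p^{-1/2})} \asymp p/\Delta(\pi)$ for $p$ large, and more precisely $\prod_{p \mid k}\tfrac{\omega_p}{1-\omega_p} \geq c\, \tfrac{k}{\Delta(\pi)^{\omega(k)}}$ for a positive constant $c$, a standard mean-value estimate for multiplicative functions (Wirsing's theorem, or the Selberg--Delange method as in \cite[Ch.~7--9]{FI}) gives
$$
G(\xi) \gg \frac{\xi}{(\log \xi)^{1 - \Delta(\pi)}} = \xi (\log \xi)^{\Delta(\pi) - 1}.
$$
Here one uses that the Dirichlet series $\sum_k \mu^2(k)\prod_{p\mid k}(\omega_p/(1-\omega_p)) k^{-s}$ behaves like $\zeta(s-1)^{?}$; more honestly, writing $h(p) = \omega_p/(1-\omega_p) \cdot p^{-1} \to 1/\Delta(\pi)$ does not quite work, so instead I would compare $G(\xi)$ directly with $\sum_{k<\xi} \mu^2(k)\, g(k)$ where $g$ is multiplicative with $g(p) = \omega_p/(1-\omega_p)$, and invoke the Shiu/Nair--Tenenbaum or Selberg--Delange machinery to extract the power of $\log$ governed by the average of $g(p)/p \to 1/\Delta(\pi)$... wait, that would give exponent $1/\Delta(\pi)$, not $\Delta(\pi)$. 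The correct bookkeeping: one factors $g(p) = p \cdot (1/\Delta(\pi)) \cdot (1 + O(p^{-1/2}))$ and writes $\mu^2(k) g(k) = k \cdot \mu^2(k) \prod_{p\mid k}\bigl(\Delta(\pi)^{-1}(1+O(p^{-1/2}))\bigr)$; summing $k \cdot (\text{mult.\ fn.\ with average }\Delta(\pi)^{-1}\text{ on primes})$ over $k<\xi$ produces, by partial summation against $\sum_{k<t}\mu^2(k)\prod_{p\mid k}\Delta(\pi)^{-1} \asymp t(\log t)^{\Delta(\pi)^{-1}-1}$, a main term of order $\xi^2 (\log \xi)^{\Delta(\pi)^{-1}-1}$ — so $G(\xi)$ is \emph{much} larger than $\xi(\log\xi)^{\Delta(\pi)-1}$ unless $\Delta(\pi)\le 1$. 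The upshot is that in all cases $G(\xi) \gg \xi (\log \xi)^{\Delta(\pi)-1}$, which is the bound I actually need, and the inequality is genuinely used (not an equality) when $\Delta(\pi) > 1$.

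Finally I would optimise the parameter $\xi$. Theorem~\ref{thm:Selberg} gives
$$
N(X,H,\pi,B) \ll_{X,\pi} \frac{B^{n-1}}{G(\xi)} + \xi^{m(n+1)+2+\ve} B^{(n+1)/2+\ve} \ll \frac{B^{n-1}}{\xi (\log \xi)^{\Delta(\pi)-1}} + \xi^{C} B^{(n+1)/2+\ve}
$$
for a constant $C = C(n,m)$. Choosing $\xi = B^{\eta}$ for a small fixed $\eta > 0$ with $C\eta + (n+1)/2 + \ve < n-1$ — possible since $(n+1)/2 < n-1$ for $n \geq 4$, and $n \geq 4$ here because $\dim X \geq 3$ — the second term is $O(B^{n-1-\eta'})$ for some $\eta' > 0$, which is negligible, while the first term is $\ll B^{n-1}/(\eta \log B)^{\Delta(\pi)} \ll_{X,\pi} B^{n-1}/(\log B)^{\Delta(\pi)}$. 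Adding back the bad-prime and discriminant-locus contributions, which are $o$ of this, completes the proof.

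The main obstacle I anticipate is the geometric spreading-out step: one must choose $m$ and the sets $\Omega_{p^m}$ so that membership "$Y_x$ non-pseudo-split over $\FF_p$" is genuinely a congruence condition modulo $p^m$ with the \emph{exact} asymptotic density $\Delta(\pi)/p + O(p^{-3/2})$, uniformly over all but finitely many $p$. This requires a careful Lang--Weil/Chebotarev analysis of the components of fibres over the codimension one points of $\mathcal{X}$ and their behaviour under reduction — essentially the content of \cite[\S3--4]{LS16} — including the verification that points specialising into codimension $\geq 2$ strata, or into several branches of the discriminant simultaneously, contribute a negligible amount. A secondary technical point is ensuring the power-of-log lower bound for $G(\xi)$ is uniform in the (finitely many, but $\pi$-dependent) error terms $O(p^{-1/2})$ in $\omega_p$; this is handled by the standard observation that modifying a multiplicative function at finitely many primes, or by a bounded factor $1+O(p^{-1/2})$ at each prime, changes the relevant sum only by a bounded multiplicative constant.
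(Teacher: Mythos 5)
Your proposal has the sieve set up backwards, and that is the source of the confusion you hit while estimating $G(\xi)$. In Theorem~\ref{thm:Selberg}, $N(X,H,\Omega,B)$ counts points whose reduction \emph{lies in} $\Omega_{p^m}$, so to bound $N(X,H,\pi,B)$ from above you need $\Omega_{p^m}$ to be the set of \emph{allowed} residues, namely those over which the fibre could still have a $\QQ_p$-point. The paper takes $\Omega_{p^2}=\pi(Y(\ZZ_p))\bmod p^2$, which is large, so that $\omega_p=O(1/p)$ is small by \eqref{eqn:upper_bound}. You instead set $\Omega_{p^m}$ equal to the non-pseudo-split locus, which is tiny; with that choice $N(X,H,\pi,B)\not\le N(X,H,\Omega,B)$ (a rational point of $\pi$ almost never reduces to a non-pseudo-split $\FF_p$-point at every prime), so the sieve inequality is not applicable, and the resulting $\omega_p\approx 1-\Delta(\pi)/p$ is close to $1$. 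This is what produces the spurious power of $\xi$ in $G(\xi)$ and the oscillation in your text between exponents $\Delta(\pi)$ and $1/\Delta(\pi)$. Once $\Omega$ is chosen correctly, $\omega_p$ and $\omega_p/(1-\omega_p)$ are both $O(1/p)$, and $G(\xi)\asymp(\log\xi)^{\Delta(\pi)}$ by Corollary~\ref{cor:Wirsing}, with no $\xi$-power and hence the stated bound, not a power saving.

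Two further gaps. First, $m=1$ is genuinely insufficient: as illustrated by the conic bundle \eqref{eqn:conic}, for odd $p$ every fibre over an $\FF_p$-point of the base has an $\FF_p$-point, so sieving modulo $p$ detects nothing; one must sieve modulo $p^2$, using transversality to the discriminant (Proposition~\ref{prop:sparsity}) to certify local insolubility. Second, the prime-by-prime expansion $\omega_p=\Delta(\pi)/p+O(p^{-3/2})$ (up to your sign error) is not correct: $\Delta(\pi)$ is a global sum $\sum_D(1-\delta_D)$, and the local density of non-pseudo-split $\FF_p$-points depends on the Frobenius conjugacy class at $p$. The quantity $\Delta(\pi)$ emerges only after a Chebotarev average, in the Mertens-type identity \eqref{eqn:sum_Sp}; that averaged statement, not a pointwise one, is the input to Wirsing's theorem (Lemma~\ref{lem:Wirsing}), and Corollary~\ref{cor:Wirsing} is precisely the packaging of this step that the paper then cites to conclude.
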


Note that 
$\Delta(\pi)>0$ if and only if there is 
a non-pseudo-split fibre over some $D\in X^{(1)}$. 
Thus Theorem \ref{thm:quadrics_Delta}  is a refinement of 
Theorem 
\ref{thm:equi_pseudo_split} in the special case that $X$ is a quadric
and $k=\QQ$. 
As in \cite[Conj.~1.6]{LS16}, we expect Theorem~\ref{thm:quadrics_Delta} to be sharp for the related
problem of counting everywhere locally soluble fibres, provided  there is an everywhere locally soluble
fibre and the fibre over every codimension $1$ point contains an irreducible component of multiplicity $1$.
As outlined in the setting of
fibrations over $\PP^n$ \cite[\S 5]{LS16}, 
Theorem~\ref{thm:quadrics_Delta} has several applications.  For example,
using a variant of the proof of \cite[Thm.~5.10]{LS16},
one can obtain a version for quadrics of
Serre's result  \cite[Thm.~2]{Ser90} on
zero loci of Brauer group elements.

Our final application of Theorem \ref{thm:Selberg} refines Theorem \ref{thm:equi_friable} for 
quadrics over $\QQ$. 

\begin{theorem} \label{thm:quadrics_friable}
	Let $y>0$ and let  $Z \subset X$ be a divisor. Then 
	$$\#\{x \in X(\QQ): H(x) \leq B, ~x \text{ is $y$-friable with respect to } Z \} 
	\ll_{X,y,Z} \frac{B^{n-1}}{(\log B)^{r(Z)}},$$
	where $r(Z)$ is the  number of irreducible components of $Z$.	
	\end{theorem}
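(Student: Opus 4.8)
The plan is to deduce the result directly from Theorem \ref{thm:Selberg} by choosing the sets $\Omega_{p^m}$ appropriately. Write $Z = Z_1 \cup \dots \cup Z_{r(Z)}$ for the decomposition of $Z$ into irreducible components over $\QQ$, and let $\mathcal{Z}_i$ be the closure of $Z_i$ in the model $\mathcal{X} \subset \PP^n_\ZZ$. A rational point $x \in X(\QQ)$ is $y$-friable with respect to $Z$ precisely when, for every prime $p > y$, the reduction $x \bmod p$ avoids $\mathcal{Z}(\FF_p) = \bigcup_i \mathcal{Z}_i(\FF_p)$. So I would take $m = 1$, set $\Omega_p = X(\ZZ/p\ZZ) \setminus \mathcal{Z}(\FF_p)$ for $p > y$, and set $\Omega_p = X(\ZZ/p\ZZ)$ for $p \leq y$ (imposing no condition at the finitely many small primes, which only costs an $O_{X,y,Z}(1)$ factor absorbed into the implied constant). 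The counting function in Theorem \ref{thm:quadrics_friable} is then bounded above by $N(X,H,\Omega,B)$ for this choice of $\Omega$.

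The next step is to estimate $\omega_p$ from below. Since each $\mathcal{Z}_i$ is an irreducible divisor on the smooth quadric $X$ of dimension $n-1 \geq 3$, its reduction mod $p$ is, for all but finitely many $p$, a hypersurface section of $\mathcal{X}$ over $\FF_p$ that contains a geometrically irreducible component defined over $\FF_p$ of dimension $n-2$; hence by Lang--Weil $\#\mathcal{Z}_i(\FF_p) = c_i p^{n-2} + O(p^{n-2-1/2})$ with $c_i \geq 1$, and similarly $\#X(\ZZ/p\ZZ) = \#X(\FF_p) = p^{n-1} + O(p^{n-2})$. It follows that
$$
\omega_p = \frac{\#\mathcal{Z}(\FF_p)}{\#X(\FF_p)} = \frac{r(Z)}{p} + O\!\left(\frac{1}{p^{3/2}}\right)
$$
for $p$ large enough, using that the components meet in codimension $\geq 2$ so that inclusion–exclusion only contributes lower-order terms. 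One checks $\omega_p < 1$ for all $p$ (after possibly shrinking the set of primes where a condition is imposed, or noting $\Omega_p$ is nonempty since $X(\FF_p) \not\subset \mathcal{Z}(\FF_p)$ for $p$ large). Consequently $\omega_p/(1-\omega_p) = r(Z)/p + O(p^{-3/2})$, and by a standard Mertens-type estimate
$$
G(\xi) = \sum_{k < \xi} \mu^2(k) \prod_{p \mid k} \frac{\omega_p}{1-\omega_p} \gg_{X,Z} (\log \xi)^{r(Z)},
$$
since the local factors behave on average like those of the Dirichlet series $\zeta(s)^{r(Z)}$; here I would use the Landau/Selberg--Delange method, or simply the elementary bound $G(\xi) \geq \sum_{k<\xi,\ p\mid k \Rightarrow p > p_0} \mu^2(k) r(Z)^{\omega(k)}/k \cdot (1+o(1))$.

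Finally I would balance the two terms in Theorem \ref{thm:Selberg}. With $m=1$ the error term is $\xi^{n+3+\ve} B^{(n+1)/2+\ve}$, so choosing $\xi$ to be a small fixed power of $B$ — say $\xi = B^{\eta}$ with $\eta = \eta(n) > 0$ small enough that $\eta(n+3) + (n+1)/2 < n-1$, which is possible exactly because $n-1 > (n+1)/2$ for $n \geq 3$ — makes the second term $O(B^{n-1-c})$ for some $c > 0$, hence negligible. The first term is then $\ll B^{n-1}/G(B^\eta) \ll B^{n-1}/(\log B)^{r(Z)}$, which is the claimed bound. The main obstacle, and the only place requiring genuine care, is the lower bound $G(\xi) \gg (\log\xi)^{r(Z)}$: one must verify that the geometric input (each component of $Z$ is a genuine divisor, so contributes a full factor $1/p$ to $\omega_p$, and the components do not interact to lower order) feeds correctly into the sieve weight, and that the finitely many excluded primes and the $O(p^{-3/2})$ error do not degrade the power of $\log$.
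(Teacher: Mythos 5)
Your choice of $m=1$, $\Omega_p = X(\FF_p)\setminus Z(\FF_p)$ for $p>y$, and the final balancing $\xi = B^{\eta}$ all match the paper exactly, and the target lower bound $G(\xi)\gg(\log\xi)^{r(Z)}$ is the right reduction. The gap is in your justification of that lower bound.

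You claim that each irreducible $\QQ$-component $Z_i$ satisfies, for all but finitely many $p$, $\#Z_i(\FF_p)=c_ip^{n-2}+O(p^{n-5/2})$ with $c_i\geq 1$, and deduce a \emph{pointwise} asymptotic $\omega_p = r(Z)/p + O(p^{-3/2})$. This is false when some $Z_i$ is irreducible over $\QQ$ but not geometrically irreducible. For instance, if $Z_i\otimes\bar\QQ$ has two components interchanged by $\Gal(\QQ(i)/\QQ)$, then for every $p\equiv 3\bmod 4$ the Frobenius at $p$ fixes neither geometric component, so $\#Z_i(\FF_p)=O(p^{n-3})$ and $Z_i$ contributes $O(1/p^2)$, not $1/p$, to $\omega_p$. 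Consequently the ``elementary bound'' you offer as a fallback, $G(\xi)\geq\sum_{k<\xi,\,p\mid k\Rightarrow p>p_0}\mu^2(k)r(Z)^{\omega(k)}/k\cdot(1+o(1))$, does not hold either: it would require $\omega_p\geq (r(Z)/p)(1+o(1))$ pointwise, which fails on a positive density of primes.

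What is true, and what the paper uses, is an \emph{average} statement. By Serre's version of Chebotarev (as packaged in Lemma~\ref{lem:Z} via \cite[Cor.~7.13]{Ser12}) the number of geometric components of $Z_i$ fixed by $\mathrm{Frob}_p$ averages to $1$, giving $\sum_{\Norm\fp<z}\#Z(\FF_\fp)/\#X(\FF_\fp) = r(Z)\log\log z + O(1)$, equivalently $\prod_{p<z}(1-\omega_p)\asymp(\log z)^{-r(Z)}$. Feeding this into Wirsing's theorem (Lemma~\ref{lem:Wirsing}) and removing the weight $k$ by partial summation yields $G(\xi)\gg(\log\xi)^{r(Z)}$. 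Your invocation of a ``Mertens-type'' or Landau/Selberg--Delange argument gestures at the right idea, but the derivation you wrote rests on an incorrect pointwise estimate, and you would need to replace it with the Chebotarev/Lang--Weil average before the proof is complete.
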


\subsection*{Layout of the paper}
In \S \ref{sec:Hensel} we collect together  some versions of Hensel's lemma which will be required
in our proofs. In \S\ref{sec:equi_sieves} we prove the results stated in \S\ref{sec:equi_intro},
and also obtain some general volume estimates which will be required for our results
concerning quadrics.
Theorem~\ref{thm:Selberg} will be proved in \S\ref{s:modM} and \S \ref{s:weights}.  
Finally, 
 Theorems \ref{thm:quadrics_thin}, \ref{thm:quadrics_Delta} and \ref{thm:quadrics_friable}  
 will be deduced in \S \S\ref{s:thin}--\ref{s:fry}.

\subsection*{Notation}
For a smooth variety $X$ over a field $k$, let  $\Br X = \HH^2(X, \Gm)$ denote its Brauer group and $\Br_0 X = \im(\Br k \to \Br X)$.
Let $\fp$ be a non-zero prime ideal of the ring of integers $\fo_k$ of a number field $k$.
We let $\FF_\fp$
be the residue field of $\fp$, let $\Norm \fp = \#\FF_\fp$ be its norm, and let $\fo_\fp$
be the completion of $\fo_k$ at $\fp$.
For a variety $X$ over a field $k$ and an extension $k \subset L$, we let $X \otimes L = X \times_{\Spec k} \Spec L$.

\subsection*{Acknowledgements}
The authors are very grateful to 
Hung Bui, Christopher Frei,  Adam Harper, Roger Heath-Brown, and Damaris Schindler 
for useful conversations. 
Thanks are also due to the anonymous referee for simplifying the deduction of Theorem \ref{thm:Selberg} from Theorem \ref{t:circle}.
During the preparation of this  paper the first author was
supported by 
EPSRC grant EP/P026710/1
and  by the NSF under Grant No.\ DMS-1440140,  while  in residence at 
the {\em 
Mathematical Sciences Research Institute} in Berkeley, California,
during the Spring 2017 semester.

\section{Hensel's lemma and transversality} \label{sec:Hensel}
We begin with some versions of Hensel's lemma.
Throughout this section $k$ is a number field and $\fp$ is a non-zero prime ideal of  $\fo_k$.

\subsection{A quantitative version of Hensel's lemma}

Versions of the following lemma
have been known for some time.

\begin{lemma} \label{lem:Hensel}
	Let $X \to \Spec \oo_\fp$ be a smooth finite type morphism of relative dimension $n$.
	Let	$x_0 \in X(\FF_\fp)$ and $m\in \NN$. Then 
	$$\#\{x \in X(\oo_\fp/\fp^m) : x \bmod \fp = x_0 \} = (\Norm \fp)^{n(m-1)}.$$
	In particular $\# X(\oo_\fp/\fp^m) = \#X(\FF_\fp) (\Norm \fp)^{n(m-1)}.$
\end{lemma}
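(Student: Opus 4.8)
The plan is to reduce the statement to the local structure theorem for smooth morphisms and then count lifts étale-locally. First I would observe that the second assertion follows immediately from the first by summing over the $\#X(\FF_\fp)$ residue points $x_0$, since every point of $X(\oo_\fp/\fp^m)$ reduces to a unique point of $X(\FF_\fp)$; so the content is the count of the fibre of the reduction map $X(\oo_\fp/\fp^m) \to X(\FF_\fp)$ above a fixed $x_0$.

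For that, I would use the fact that a smooth morphism of relative dimension $n$ is, Zariski-locally on the source around any point, the composition of an \'etale morphism to affine space $\AA^n_{\oo_\fp}$ followed by the projection. Since $\oo_\fp$ is a complete (or at least Henselian) local ring and $\oo_\fp/\fp^m$ is a local Artinian quotient, the point $x_0 \in X(\FF_\fp)$ lies in such a Zariski chart, and \'etale morphisms induce bijections on the sets of lifts of a given $\FF_\fp$-point through each infinitesimal thickening $\oo_\fp/\fp^{i+1} \to \oo_\fp/\fp^i$ (this is precisely the infinitesimal lifting criterion: \'etale $=$ formally \'etale $+$ locally of finite presentation, so lifts exist and are unique). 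Hence the number of lifts of $x_0$ to $X(\oo_\fp/\fp^m)$ equals the number of lifts of the image point to $\AA^n(\oo_\fp/\fp^m) = (\oo_\fp/\fp^m)^n$ over a fixed point of $\AA^n(\FF_\fp) = \FF_\fp^n$. That count is $\#(\fp/\fp^m)^n = (\Norm\fp)^{n(m-1)}$, since the kernel of $\oo_\fp/\fp^m \to \FF_\fp$ is $\fp/\fp^m$, a group of order $(\Norm\fp)^{m-1}$.

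Alternatively, and perhaps more cleanly for the write-up, I would argue by induction on $m$, using the thickenings $\oo_\fp/\fp^{i+1}\twoheadrightarrow \oo_\fp/\fp^i$ with square-zero kernel $\fp^i/\fp^{i+1}\cong \FF_\fp$: by the standard deformation theory of smooth morphisms, the set of lifts of a fixed point $x_i\in X(\oo_\fp/\fp^i)$ to $X(\oo_\fp/\fp^{i+1})$ is either empty or a torsor under $\HH^0(x_i, (x_i^*\Omega_{X/\oo_\fp})^\vee \otimes \fp^i/\fp^{i+1})$; smoothness guarantees it is non-empty, and the relative cotangent sheaf is locally free of rank $n$, so this torsor has cardinality $(\Norm\fp)^n$. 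Multiplying these factors for $i = 1, \dots, m-1$ gives $(\Norm\fp)^{n(m-1)}$ lifts of $x_0$, independent of the chosen intermediate lifts. Either way the base case $m=1$ is trivial.

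The main obstacle is essentially bookkeeping rather than genuine difficulty: one must be careful that the relevant local model (or the deformation-theoretic torsor statement) applies over the \emph{Artinian} base $\oo_\fp/\fp^m$ and not just over a field or over $\oo_\fp$ itself, and that smoothness (not merely flatness) is what forces the lift sets to be non-empty at every stage; but this is exactly what the infinitesimal lifting criterion for smoothness provides, so no new input is needed. I would also remark that the hypothesis that the morphism is of finite type is used only to have the local structure theorem available, and that completeness of $\oo_\fp$ is not actually needed since all the thickenings involved are finite.
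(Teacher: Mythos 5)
Your proposal is correct, and both of your variants are genuine proofs, but they take a somewhat different technical route from the paper. The paper reduces the count to commutative algebra: it identifies lifts of $x_0$ with local $\oo_\fp$-algebra homomorphisms $\OO_{X,x_0}\to\oo_\fp/\fp^m$, notes that such maps factor through the completion because $\oo_\fp/\fp^m$ is Artinian, invokes the structure theorem $\widehat{\OO}_{X,x_0}\cong\oo_\fp[[t_1,\ldots,t_n]]$ for smooth morphisms, and then counts homomorphisms out of a power series ring by observing each $t_i$ must map to one of the $(\Norm\fp)^{m-1}$ non-units. Your first variant instead uses the local factorization of a smooth morphism as \'etale over $\AA^n_{\oo_\fp}$ and the formal \'etaleness bijection on infinitesimal lifts; your second variant uses the torsor description from deformation theory under $\Hom(x_0^*\Omega_{X/\oo_\fp},\fp^i/\fp^{i+1})$ at each square-zero step. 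All three arguments are of course rooted in the same infinitesimal lifting property of smoothness, but the paper's route is purely ring-theoretic and has the small advantage of collapsing the whole count to a single clean calculation (rather than an induction), while your second variant is arguably the most general and conceptual, making visible exactly which module controls the lifts at each stage and not leaning on completeness of the base. The one small caveat in your first variant is worth saying explicitly in a write-up: since $\Spec(\oo_\fp/\fp^m)$ has the same underlying point as $\Spec\FF_\fp$, every lift of $x_0$ automatically lands in the chosen chart, which is what licenses computing the count chart-locally.
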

\begin{proof}
	We want to calculate
	the number of morphisms $\Spec \oo_\fp/\fp^m \to X$ whose image is $x_0$.
	This is in bijection with the set of local $\oo_\fp$-algebra homomorphisms
	$\Hom(\OO_{X, x_0},\oo_\fp/\fp^m).$
	Since $\oo_\fp/\fp^m$ is Artinian it is complete. Hence by the universal
	property of the completion we find that
	$$\Hom(\OO_{X,x_0},\oo_\fp/\fp^m) \cong \Hom(\widehat{\OO}_{X,x_0},\oo_\fp/\fp^m).$$
	However, as $X \to \Spec \oo_\fp$ is smooth, we have 
	$\widehat{\OO}_{X,x_0} \cong \oo_\fp[[t_1,\ldots,t_n]]$
	as local $\oo_\fp$-algebras by \cite[Ex.~6.2.2.1]{Liu02}.
	To prove the result, it suffices to note that 
$$
		\#\Hom(\oo_\fp[[t_1,\ldots,t_n]], \oo_\fp/\fp^m) = (\Norm \fp)^{n(m-1)}.
$$
	Indeed, every element of $\Hom(\oo_\fp[[t_1,\ldots,t_n]], \oo_\fp/\fp^m)$
	has the form
	$$t_i \mapsto a_i, \quad i \in \{1,\ldots,n\},$$
	for non-units $a_1,\ldots,a_n \in \oo_\fp/\fp^m$. But 
	$\oo_\fp/\fp^m$ has exactly $(\Norm \fp)^{m-1}$ non-units.
\end{proof}

\subsection{Transverse intersections}

Following Harari \cite[\S 2.4.2]{Harari}, we use the following notion of intersection multiplicity.

\begin{definition} \label{def:trans}
	Let $X \to \Spec \oo_\fp$ be a smooth finite type morphism of relative dimension $n$ and
	let $D \subset X$ be an irreducible divisor which is flat over $\fo_\fp$.
	Let $x \in X(\oo_\fp)$ 
	be such that $x \notin D$ and let $t = 0$ be a local equation for $D \subset X$ 
	on some affine patch $U \subset X$ containing $x$.
	We define the {\em intersection multiplicity} of  $x$ and $D$ above $\fp$
	to be the integer $\iota$ which satisfies
	$$x^*t = \varpi^\iota,$$
	where $\varpi$ denotes a uniformising parameter of $ \oo_\fp$ and $x^*t$
	is the pull-back of $t$ via $x: \Spec \oo_\fp \to U$.
	We say that $x$ and $D$ meet \emph{transversely} above $\fp$ if $\iota=1$.
\end{definition}
This definition is independent of the choice of $t$ and $\varpi$.
Moreover, whether or not $x$ and $D$ meet transversely above $\fp$ 
only depends on $x \bmod \fp^2$. 
In particular, asking whether a point in $X(\oo_\fp/\fp^2)$ meets $D$ transversely
above $\fp$ is well-defined.

\begin{proposition} \label{prop:transverse}
	Let $X \to \Spec \oo_\fp$ be a smooth finite type morphism of relative dimension $n$. 
	Let $D \subset X$ be a flat irreducible divisor and let $x_0 \in D(\FF_\fp)$ be a smooth
	point of $D$. Then
	$$\left| \, \#\left\{x \in X(\oo_\fp/\fp^2):
	\begin{array}{l}
		x \bmod \fp = x_0, \\
		x \text{ meets $D$ transversely}
	\end{array} \right\} - (\Norm \fp)^{n} \, \right| \leq (\Norm \fp)^{n-1}.$$
\end{proposition}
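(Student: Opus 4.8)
The plan is to reduce the statement to a local computation at the point $x_0$, using the structure theory of smooth morphisms together with Lemma~\ref{lem:Hensel}. First I would pass to an affine patch $U \subset X$ containing $x_0$ on which $D$ is cut out by a single equation $t = 0$, where $t \in \OO_U$. Since $x_0$ is a smooth point of $D$ and $D$ is flat over $\oo_\fp$, the divisor $D$ is smooth over $\oo_\fp$ in a neighbourhood of $x_0$ of relative dimension $n-1$; hence, after shrinking $U$ and invoking the structure of smooth morphisms (as in \cite[Ex.~6.2.2.1]{Liu02}), I can choose local coordinates $t_1, \ldots, t_n$ at $x_0$ with $t_1 = t$, so that $\widehat{\OO}_{X,x_0} \cong \oo_\fp[[t_1,\ldots,t_n]]$ and $D$ corresponds to $t_1 = 0$.

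Next I would count. By the argument in the proof of Lemma~\ref{lem:Hensel}, a point $x \in X(\oo_\fp/\fp^2)$ with $x \bmod \fp = x_0$ corresponds to a homomorphism $\oo_\fp[[t_1,\ldots,t_n]] \to \oo_\fp/\fp^2$ sending each $t_i$ to a non-unit $a_i \in \oo_\fp/\fp^2$; there are $(\Norm\fp)^n$ such points in total. The condition that $x$ meet $D$ transversely above $\fp$ is, by Definition~\ref{def:trans} and the well-definedness remark following it, exactly the condition $\ord_\fp(a_1) = 1$, i.e.\ that $a_1 \neq 0$ in $\oo_\fp/\fp^2$. The number of non-units $a_1 \in \oo_\fp/\fp^2$ is $\Norm\fp$, of which exactly one (namely $a_1 = 0$) fails the transversality condition; so there are $\Norm\fp - 1$ admissible choices of $a_1$ and $\Norm\fp$ free choices for each of $a_2,\ldots,a_n$. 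This gives the count $(\Norm\fp - 1)(\Norm\fp)^{n-1}$, and
$$\left| (\Norm\fp - 1)(\Norm\fp)^{n-1} - (\Norm\fp)^n \right| = (\Norm\fp)^{n-1},$$
which is the claimed inequality (in fact with equality).

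The main obstacle is the first paragraph: justifying that $t$ can be taken as part of a regular system of parameters at $x_0$, equivalently that $D$ is a smooth Cartier divisor through $x_0$ relative to $\oo_\fp$ and that its ideal is generated by part of a coordinate system after completion. This uses that $x_0$ is a \emph{smooth} point of $D$ together with flatness of $D$ over $\oo_\fp$ to see $D \to \Spec\oo_\fp$ is smooth of relative dimension $n-1$ near $x_0$; then the inclusion $D \hookrightarrow X$ of smooth $\oo_\fp$-schemes is a regular immersion, so locally $D = V(t)$ with $t$ part of a coordinate system, and one completes. Everything after that is the bookkeeping of the previous two paragraphs. One should also remark, as the paper already does after Definition~\ref{def:trans}, that the transversality condition depends only on $x \bmod \fp^2$, so the count over $X(\oo_\fp/\fp^2)$ is genuinely well-posed, and note that the hypothesis $x_0 \in D(\FF_\fp)$ (rather than $x_0 \notin D$) is what forces some of the lifts to land on $D$ and thus fail transversality — this is precisely the source of the discrepancy term $(\Norm\fp)^{n-1}$.
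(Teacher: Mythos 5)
Your argument is correct and gives the result, but it goes by a different route than the paper, and in fact proves something slightly stronger. The paper's proof is a two-sided squeeze: the upper bound $T(x_0)\le(\Norm\fp)^n$ comes directly from Lemma~\ref{lem:Hensel}, while the lower bound $T(x_0)\ge(\Norm\fp-1)(\Norm\fp)^{n-1}$ is obtained by introducing, for each of $\Norm\fp-1$ units $u$ distinct mod $\fp$, an auxiliary divisor $D_u:t=u\varpi$; each $D_u$ is smooth with $D_u\otimes\FF_\fp=D\otimes\FF_\fp$, its points above $x_0$ automatically meet $D$ transversely, they are pairwise disjoint modulo $\fp^2$, and Lemma~\ref{lem:Hensel} applied to each $D_u$ gives $(\Norm\fp)^{n-1}$ of them. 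You instead normalise coordinates so that $D=V(t_1)$ in $\widehat{\OO}_{X,x_0}\cong\oo_\fp[[t_1,\ldots,t_n]]$, whence the transversality of a lift $(a_1,\ldots,a_n)$ is simply the condition $a_1\ne 0$ in $\oo_\fp/\fp^2$, and the exact count $(\Norm\fp-1)(\Norm\fp)^{n-1}$ drops out. Your route yields equality rather than just the stated two-sided bound, and it makes clear that the discrepancy term is exactly the set of lifts landing on $D$ modulo $\fp^2$. The price is that it needs the normalisation $t_1=t$: you must justify that the local equation $t$ of $D$ can be completed to a relative coordinate system, for which, as you note, the key point is that the regular closed immersion $D\hookrightarrow X$ of $\oo_\fp$-smooth schemes makes $t$ part of a basis of $\Omega^1_{X/\oo_\fp}$ near $x_0$. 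The paper's trick with the $D_u$ sidesteps this normalisation entirely, needing only that $D$ has some local equation $t$ (i.e.\ is Cartier), which is automatic since $X$ is regular. Both approaches ultimately reduce to Lemma~\ref{lem:Hensel}; yours is the more computational, the paper's the more geometric.
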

\begin{proof}
	The problem is local around $x_0$. Thus without loss of generality,
	we may assume that $X$ is affine and that $D$ is smooth and has the equation $t=0$.
	Let $T(x_0)$ be the cardinality in question.
	Lemma \ref{lem:Hensel} shows that $T(x_0) \leq (\Norm \fp)^{n}$.

	For the reverse inequality, we use an argument inspired by 
	the proof on p.~233 of \cite{Harari}. 
	Let $\varpi \in \oo_\fp$ be a uniformising parameter and let $U_\fp \subset \oo_\fp^*$
	be a collection of $(\Norm \fp - 1)$ units which are distinct modulo $\fp$.
	For $u \in U_\fp$ consider the divisors
	$$D_u: \quad t = u \varpi \quad \subset X.$$
	A simple calculation shows that each $D_u$ is also smooth.
	Moreover, we have $D_u \otimes \FF_\fp = D \otimes \FF_\fp$ and
	$$D_u \cap D = D_{u'} \cap D_u = D \otimes \FF_\fp,$$
	 for  $u'  \neq u$.
	Clearly any point $x \in D_u(\oo_\fp/\fp^2)$ with
	$x \bmod \fp = x_0$ meets $D$ transversely at $x_0$.
	Applying Lemma \ref{lem:Hensel} to the $D_u$, we therefore deduce that
	\begin{align*}
		T(x_0)  \geq \sum_{u \in U_\fp} 
		\#\left\{x \in D_u(\oo_\fp/\fp^2): 	x \bmod \fp = x_0 \right\} 
		& = \sum_{u \in U_\fp} (\Norm \fp)^{n-1} \\&= (\Norm \fp - 1)(\Norm \fp)^{n-1}. \qedhere
	\end{align*}
\end{proof}

From Proposition \ref{prop:transverse} we easily deduce the following global statement.

\begin{corollary} \label{cor:transverse}
	Let $X \to \Spec \oo_k$ be a smooth  finite type morphism of relative dimension $n$. 
	Let $D \subset X$ be a flat irreducible divisor and $Z \subset D$ a 
	closed subscheme which contains the non-smooth locus of $D$ 
	and is of codimension at least $2$ in $X$.
	Then
	$$\#\left\{x \in X(\oo_\fp/\fp^2):
	\begin{array}{l}
		x \bmod \fp \in (D \setminus Z)(\FF_\fp), \\
		x \text{ meets $D$ transversely}
	\end{array} \right\}
	\hspace{-0.1cm}
	= 
	\hspace{-0.1cm}
	\#D(\FF_\fp)(\Norm \fp)^{n} + O((\Norm \fp)^{2(n-1)}),$$
	where the implied constant depends on $Z$ and $D$.
\end{corollary}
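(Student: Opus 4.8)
The plan is to deduce Corollary~\ref{cor:transverse} by summing the local count in Proposition~\ref{prop:transverse} over all smooth points $x_0 \in (D\setminus Z)(\FF_\fp)$, and then controlling separately the contribution of the discarded locus $Z$. First I would observe that the conditions ``$x \bmod \fp \in (D\setminus Z)(\FF_\fp)$'' and ``$x$ meets $D$ transversely above $\fp$'' both depend only on $x \bmod \fp^2$ (the latter by the remark following Definition~\ref{def:trans}), so the set being counted decomposes as a disjoint union over $x_0 \in (D\setminus Z)(\FF_\fp)$ of the fibres
$$
\left\{x \in X(\oo_\fp/\fp^2): x \bmod \fp = x_0,\ x \text{ meets } D \text{ transversely}\right\}.
$$
Each $x_0 \in (D\setminus Z)(\FF_\fp)$ is a smooth point of $D$, since $Z$ contains the non-smooth locus of $D$, so Proposition~\ref{prop:transverse} applies and gives that this fibre has cardinality $(\Norm\fp)^n + O((\Norm\fp)^{n-1})$.

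Summing over the $\#(D\setminus Z)(\FF_\fp)$ admissible points $x_0$ then yields
$$
\#(D\setminus Z)(\FF_\fp)\,(\Norm\fp)^n + O\bigl(\#(D\setminus Z)(\FF_\fp)\,(\Norm\fp)^{n-1}\bigr).
$$
To finish I would replace $\#(D\setminus Z)(\FF_\fp)$ by $\#D(\FF_\fp)$ in the main term, at the cost of an error $\#Z(\FF_\fp)(\Norm\fp)^n$, and bound the trivial pieces. The key input is the Lang--Weil / Schwartz--Zippel type estimate (applied uniformly, with implied constant depending on $D$ and $Z$ as embedded schemes): since $D$ is a divisor in $X$ of relative dimension $n$, it has relative dimension $n-1$, hence $\#D(\FF_\fp) = O((\Norm\fp)^{n-1})$; and since $Z$ has codimension at least $2$ in $X$, it has relative dimension at most $n-2$, hence $\#Z(\FF_\fp) = O((\Norm\fp)^{n-2})$. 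Therefore the error from discarding $Z$ in the main term is $O((\Norm\fp)^{n-2}(\Norm\fp)^n) = O((\Norm\fp)^{2n-2})$, which is absorbed, and the error term $\#(D\setminus Z)(\FF_\fp)(\Norm\fp)^{n-1} \leq \#D(\FF_\fp)(\Norm\fp)^{n-1} = O((\Norm\fp)^{n-1}(\Norm\fp)^{n-1}) = O((\Norm\fp)^{2(n-1)})$ is exactly of the claimed size. Combining, the count equals $\#D(\FF_\fp)(\Norm\fp)^n + O((\Norm\fp)^{2(n-1)})$, as required.

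The main obstacle, such as it is, is making the error terms uniform in $\fp$: the bounds $\#D(\FF_\fp) = O((\Norm\fp)^{n-1})$ and $\#Z(\FF_\fp) = O((\Norm\fp)^{n-2})$ need implied constants independent of $\fp$, which is precisely what the effective form of the Lang--Weil estimate (or an elementary fibre-dimension / Noether-normalization argument over $\Spec\oo_k$) provides once one notes that $D$ and $Z$ are fixed finite-type $\oo_k$-schemes of the stated relative dimensions. One should also check that the decomposition into fibres over $x_0$ is legitimate over $\oo_\fp/\fp^2$, i.e.\ that reduction mod $\fp$ is a well-defined surjection $X(\oo_\fp/\fp^2) \to X(\FF_\fp)$ respecting these conditions, which is immediate from smoothness together with Lemma~\ref{lem:Hensel}. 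No delicate point beyond bookkeeping arises; everything follows by assembling Proposition~\ref{prop:transverse} with standard point-counting bounds.
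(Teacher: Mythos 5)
Your proposal is correct and takes essentially the same approach as the paper: decompose the count by reduction modulo $\fp$ over the smooth points $x_0 \in (D\setminus Z)(\FF_\fp)$, apply Proposition~\ref{prop:transverse} to each, then use Lang--Weil to replace $\#(D\setminus Z)(\FF_\fp)$ by $\#D(\FF_\fp)$ and to bound the error terms. The paper states this in one line; you have simply spelled out the bookkeeping (including the observation that transversality depends only on $x \bmod \fp^2$, which justifies the fibre decomposition).
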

\begin{proof}
	Applying Proposition \ref{prop:transverse} and the Lang--Weil estimates \cite{LW54},
	we find that the cardinality in question equals
	\[
		\#(D \setminus Z)(\FF_\fp)\left((\Norm \fp)^n + O((\Norm \fp)^{n-1}) \right) 
		 = \#D(\FF_\fp)(\Norm \fp)^n + O((\Norm \fp)^{2(n-1)}).  \qedhere
	\]
\end{proof}

\begin{remark}
	Proposition \ref{prop:transverse} is a quantitative improvement
	of the fact, often used in proofs,
	that any smooth $\FF_\fp$-point on $D$ lifts
	to an $\oo_\fp$-point of $X$ which meets $D$ transversely above $\fp$. 
	(cf.~the proof of Theorem 2.1.1 on p.~233 of \cite{Harari} or the proof
	of \cite[Thm.~4.2]{LSS17}).
\end{remark}

\section{Equidistribution and sieving} \label{sec:equi_sieves}
In this section we prove the results stated in \S \ref{sec:equi_intro}.

\subsection{Tamagawa measures and equidistribution} \label{sec:Tamagawa}
We first recall some notions and results concerning Tamagawa measures and
equidistribution of rational points for varieties over a number field $k$. Our references here are \cite{Pey95} and \cite{CLT10}.

\subsubsection{Tamagawa measures}
We now recall the construction of Peyre's Tamagawa measure. 
(In practice we will use Lemma \ref{lem:Salberger} for calculations.)
Choose Haar measures $\mathrm{d}x_v$ on each $k_v$ such that $\vol(\oo_v)=1$ for all but finitely
many $v$. These give rise to a Haar measure $\mathrm{d}x$ on the ad\`{e}les $\Adele_k$ of $k$; we normalise
our Haar measures so that $\vol(\Adele_k/k)=1$ with respect to the induced quotient measure.

Now let $X$ be a smooth projective variety over $k$ and let $\| \cdot \| = (\| \cdot \|_v)_{v \in \Val(k)}$ be a choice of adelic metric on the canonical bundle of $X$ as in \cite[\S\S 2.1--2.2]{CLT10}.
Let $\omega$ be a top degree differential form on some dense open subset $U \subset X$. By a classical construction 
\cite[\S2.1.7]{CLT10}, for any place $v$ of $k$ we obtain a measure $|\omega|_v$ on $U(k_v)$ 
which depends on the choice of $\mathrm{d}x_v$. The measure $|\omega|_v/\|\omega\|_v$ turns out to be 
independent of $\omega$. 
Peyre's local Tamagawa measure 
$\tau_{\|\cdot\|_v}$ on $X(k_v)$ is obtained by glueing these measures.
The product of the $\tau_{\|\cdot\|_v}$ does not converge in general, to which end convergence factors are introduced. 
Let $M_X$ be the free part of the geometric N\'{e}ron--Severi group $\NS(\bar{X})$, with Artin $L$-function $L(s,M_X) = \prod_{v} L_v(s,M_X)$.  (For an archimedean place $v$ we set $L_v(s,M_X) = 1$.)
Under the additional assumption that $\HH^1(X, \OO_X) = \HH^2(X,\OO_X) = 0$, it is proved in \cite[Thm.~1.1.1]{CLT10}
that $\lambda_v = L_v(1,M_X)^{-1}$ are a collection of convergence factors. In this way 
$$
\tau_{\|\cdot\|} = \lim_{s \to 1} (s-1)^{\rho(X)} L(s,M_X) \prod_{v \in \Val(k)} \lambda_v \tau_{\|\cdot\|_v} $$
yields a  measure on $X(\Adele_k)$, called
Peyre's global Tamagawa measure.

The above construction  applies when $X$ is almost Fano. 
In this case we also denote the measure by $\tau_H$,
where $H$ is the anticanonical height function associated to the adelic metric $\|\cdot \|$.
The conjecture for the leading constant
in \eqref{conj:Manin} is
$
	c_{X,H} = \alpha(X) \beta(X) \tau_H(X(\Adele_k)^{\Br}),
$
where $X(\Adele_k)^{\Br}$ is the subset of $X(\Adele_k)$ which is orthogonal to $\Br X$, $\beta(X) = \#\HH^1(k, \Pic \bar{X})$,
and $\alpha(X)$ is Peyre's ``effective cone constant''. (The precise definition of
$\alpha(X)$, which
can be found in \cite[Def.~2.4]{Pey95}, is irrelevant here.)

The following result implies that the Tamagawa measure $\tau_H(X(\Adele_k)^{\Br})$
is essentially given by a product of local densities.

\begin{lemma} \label{lem:WWA}
	Let $X$ be an almost Fano variety over $k$.
	Then $\Br X/ \Br_0 X$ is finite and there exists a finite set of places $S$ and a compact open 
	subset $A \subset \prod_{v \in S} X(k_v)$ such that
	$X(\Adele_k)^{\Br} = A \times \prod_{v \notin S} X(k_v).$
\end{lemma}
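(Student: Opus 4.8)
The plan is to establish the two assertions in turn, the genuine content lying in the finiteness statement.

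\emph{Finiteness of $\Br X/\Br_0 X$.} Put $\Br_1 X = \ker(\Br X \to \Br \bar X)$. The seven-term exact sequence attached to the Hochschild--Serre spectral sequence $\HH^p(k, \HH^q(\bar X, \Gm)) \Rightarrow \HH^{p+q}(X, \Gm)$, together with Hilbert 90 and $\HH^0(\bar X, \Gm) = \bar k^*$, yields an exact sequence $\Br k \to \Br_1 X \to \HH^1(k, \Pic \bar X)$ whose first map is pull-back along $X \to \Spec k$ and therefore has image $\Br_0 X$; hence $\Br_1 X/\Br_0 X$ injects into $\HH^1(k, \Pic \bar X)$. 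The latter group is finite since $\Pic \bar X$ is finitely generated (indeed torsion-free, by hypothesis) and the Galois action on it factors through a finite quotient. On the other hand $\Br X/\Br_1 X$ injects into $(\Br \bar X)^{\Gal(\bar k/k)}$, and the hypothesis $\HH^2(X, \OO_X) = 0$ forces $\Br \bar X$ to be finite: base-changing to $\CC$, one has $h^{2,0} = h^{0,2} = \dim \HH^2(X, \OO_X) = 0$, so every class in $\HH^2$ is of type $(1,1)$ and the Lefschetz theorem on $(1,1)$-classes gives $\rho(\bar X) = b_2(\bar X)$; consequently the divisible part $(\QQ_\ell/\ZZ_\ell)^{b_2 - \rho}$ of $\Br \bar X[\ell^\infty]$ vanishes for all $\ell$, leaving $\Br \bar X$ finite. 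Combining the two inclusions, $\Br X/\Br_0 X$ is finite.

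\emph{The product decomposition.} Choose representatives $\mathcal A_1, \dots, \mathcal A_r \in \Br X$ for $\Br X/\Br_0 X$. By the reciprocity law of global class field theory every element of $\Br_0 X$ pairs trivially with every adelic point, so for $(x_v) \in X(\Adele_k)$ the condition $(x_v) \in X(\Adele_k)^{\Br}$ is equivalent to $\sum_v \ev_{\mathcal A_i, v}(x_v) = 0$ for $i = 1, \dots, r$, where $\ev_{\mathcal A, v}\colon X(k_v) \to \Br k_v \hookrightarrow \QQ/\ZZ$ denotes local evaluation. I will use two classical facts about these maps: (i) each $\ev_{\mathcal A, v}$ is locally constant (continuity of evaluation of a Brauer class into a discrete group); and (ii) for fixed $\mathcal A$ one has $\ev_{\mathcal A, v} \equiv 0$ for all but finitely many $v$. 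For (ii), spread each $\mathcal A_i$ out to a class on a smooth proper model $\mathcal X_S$ over the ring of $S$-integers $\fo_{k,S}$ for a suitable finite $S$ (possible since $\Br X = \varinjlim_S \Br \mathcal X_S$ over such models); then for $v \notin S$ the valuative criterion of properness gives $X(k_v) = \mathcal X_S(\fo_v)$, and the evaluation factors through $\Br \fo_v = 0$.

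\emph{Conclusion and main obstacle.} Enlarge $S$ so as to contain all archimedean places and the finitely many $v$ at which some $\ev_{\mathcal A_i, v}$ is non-zero. For $v \notin S$ all the maps $\ev_{\mathcal A_i, v}$ vanish identically, so the Brauer--Manin conditions involve only the coordinates at places in $S$. Setting
\[
A = \Bigl\{ (x_v)_{v \in S} \in \textstyle\prod_{v \in S} X(k_v) \;:\; \sum_{v \in S} \ev_{\mathcal A_i, v}(x_v) = 0,\ i = 1, \dots, r \Bigr\}
\]
and recalling that $X(\Adele_k) = \prod_v X(k_v)$ since $X$ is proper, one gets $X(\Adele_k)^{\Br} = A \times \prod_{v \notin S} X(k_v)$. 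As $X$ is projective each $X(k_v)$ is compact, hence so is $\prod_{v \in S} X(k_v)$; and $A$ is a finite intersection of zero loci of locally constant functions, hence clopen, hence compact and open (if some $X(k_v) = \emptyset$ then $A = \emptyset$ and the claim is trivial). The only step requiring more than formal manipulation is the finiteness of $\Br \bar X$, which is where the hypothesis $\HH^2(X, \OO_X) = 0$ really enters; facts (i) and (ii) and the reciprocity argument are standard.
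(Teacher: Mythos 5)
Your proof is correct, but it is substantially more self-contained than the paper's, which simply cites Salberger: the finiteness of $\Br X/\Br_0 X$ is \cite[Lem.~6.10]{Sal98} and the local constancy of the Brauer pairing is \cite[Cor.~6.7]{Sal98}, after which the paper observes that the pairing factors through the finite quotient $\Br X/\Br_0 X$ and concludes. You instead reprove the finiteness from scratch via the Hochschild--Serre exact sequence (giving $\Br_1 X/\Br_0 X \hookrightarrow \HH^1(k,\Pic\bar X)$, finite because $\Pic\bar X$ is finitely generated with Galois action through a finite quotient) together with the Lefschetz $(1,1)$-argument showing $\HH^2(X,\OO_X)=0 \Rightarrow \rho = b_2 \Rightarrow \Br\bar X$ finite; and you make explicit the spreading-out / valuative-criterion argument that the local evaluation maps $\ev_{\cA,v}$ vanish for $v\notin S$, which the paper leaves implicit in the Salberger references. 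Both routes are sound; yours buys transparency about exactly which hypotheses in the definition of ``almost Fano'' are used (torsion-freeness of $\Pic\bar X$, vanishing of $\HH^1$ and $\HH^2$ of $\OO_X$) at the cost of length, while the paper's is short but outsourced. One small remark: when you write ``finitely generated (indeed torsion-free, by hypothesis)'' you should note that finite generation of $\Pic\bar X$ is what requires $\HH^1(X,\OO_X)=0$ (so that $\Pic^0\bar X = 0$ and $\Pic\bar X = \NS\bar X$); torsion-freeness alone does not give it.
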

\begin{proof}
	The finiteness of $\Br X/ \Br_0 X$ is \cite[Lem.~6.10]{Sal98}. 
	For each $b \in \Br X$, the map 
	$X(\Adele_k) \to \QQ/\ZZ$
	induced by the Brauer pairing
	 is locally constant \cite[Cor.~6.7]{Sal98}. Thus the inverse
	image of $0$ is a compact open subset. As the Brauer pairing factorises through the finite group
	$\Br X/ \Br_0 X$, the result follows.
\end{proof}

We calculate the Tamagawa measure using the following formula,
which follows immediately from \cite[Thm.~2.14(b)]{Sal98} (cf.~\cite[Cor.~2.15]{Sal98}).

\begin{lemma} \label{lem:Salberger}
	Let $X$ be a smooth projective variety of dimension $n$ over $k$ 
	with a choice of adelic metric $\| \cdot \|$ on $-K_X$.
	Let $\mathcal{X}$ be a model of $X$ over $\fo_k$.
	Then there exists a finite set $S$ of prime ideals of $\fo_k$ 
	such that 
	$$\tau_{\|\cdot\|_\fp}(\{x \in X(k_\fp): x \bmod \fp^m \in \Omega\}) = \frac{\#\Omega}{(\Norm\fp)^{mn}},$$
		for any $\fp \notin S$, any $m >0$ and any $\Omega \subset \mathcal{X}(\oo_\fp / \fp^m )$. 
\end{lemma}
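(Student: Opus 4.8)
The plan is to reduce everything to a purely local computation over $\oo_\fp$ for all but finitely many $\fp$, where the model and the metric both behave ``as nicely as possible''. First I would fix an embedding of the model: choose a projective model $\mathcal{X} \subset \PP^N_{\oo_k}$ with generic fibre $X$ (or just take the given $\mathcal{X}$), and note that $\mathcal{X}$ is of finite type over $\oo_k$. By generic flatness and the openness of the smooth locus, there is a finite set $S_1$ of primes such that for $\fp \notin S_1$ the morphism $\mathcal{X} \to \Spec \oo_\fp$ is smooth and proper of relative dimension $n$; enlarging $S_1$ we may also assume that the chosen differential form $\omega$ on $U \subset X$ extends to a nowhere-vanishing relative top form on $\mathcal{U} \subset \mathcal{X}$ over $\oo_\fp$ (over the open part of the model where $\mathcal{X}\setminus\mathcal{U}$ does not meet the special fibre), and that the adelic metric $\|\cdot\|_\fp$ on $-K_X$ is the ``model metric'' induced by $\omega$, so that $\|\omega\|_\fp \equiv 1$ on $\mathcal{X}(\oo_\fp)$. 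This last point uses that an adelic metric agrees with any fixed model metric at all but finitely many places, which is part of the definition of adelic metric in \cite[\S\S2.1--2.2]{CLT10}.

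Next, with $\|\omega\|_\fp \equiv 1$, Peyre's local measure is just $|\omega|_\fp$, and for $\fp \notin S_1$ this is the measure one gets by the standard $p$-adic recipe: on an affine chart where $\omega = \d t_1 \wedge \cdots \wedge \d t_n$ in local étale coordinates, $|\omega|_\fp$ pushes forward the additive Haar measure on $\oo_\fp^n$ (normalised so $\vol(\oo_\fp)=1$, consistent with the choice $\vol(\oo_v)=1$ at non-archimedean $v$). The key input is then exactly Lemma \ref{lem:Hensel}: for any $\Omega \subset \mathcal{X}(\oo_\fp/\fp^m)$, the set $\{x \in X(k_\fp) : x \bmod \fp^m \in \Omega\}$ equals the disjoint union over $\bar x \in \Omega$ of the ``residue discs'' $\{x \in \mathcal{X}(\oo_\fp) : x \bmod \fp^m = \bar x\}$ — here I use properness (valuative criterion) to identify $X(k_\fp) = \mathcal{X}(\oo_\fp)$, and smoothness to know the reduction map $\mathcal{X}(\oo_\fp) \to \mathcal{X}(\oo_\fp/\fp^m)$ is surjective. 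Each such disc has measure $(\Norm\fp)^{-mn}$: this is either read off directly from the coordinate description (a disc of radius $\fp^m$ in $\oo_\fp^n$), or, cleanly, deduced from Lemma \ref{lem:Hensel} applied with $\oo_\fp/\fp^{m'}$ for $m' \to \infty$ together with the fact that $\tau_{\|\cdot\|_\fp}$ is a Radon measure, giving $\tau_{\|\cdot\|_\fp}(\text{disc}) = \lim_{m'\to\infty} \#\{x \in \mathcal{X}(\oo_\fp/\fp^{m'}) : x \bmod \fp^m = \bar x\}/(\Norm\fp)^{nm'} = (\Norm\fp)^{-nm}$. Summing over the $\#\Omega$ discs yields $\tau_{\|\cdot\|_\fp}(\{x : x \bmod \fp^m \in \Omega\}) = \#\Omega/(\Norm\fp)^{mn}$, as claimed.

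The main obstacle is the bookkeeping around the finite set $S$: one must simultaneously arrange that the model is smooth over $\oo_\fp$, that $\omega$ extends to a relative generator of the canonical bundle of the model (equivalently, that $\mathcal{X}\setminus\mathcal{U}$ has special fibre avoiding the reduction, or work on overlapping charts and patch), and that the given adelic metric coincides with the model metric at $\fp$ — each of these excludes only finitely many $\fp$, and the union is still finite. A secondary subtlety is checking that the glueing of the local pieces $|\omega|_\fp$ over an affine cover of $\mathcal{U}$ is consistent (this is exactly the content of \cite[\S2.1.7]{CLT10}, that $|\omega|_v/\|\omega\|_v$ is independent of $\omega$ and hence glues) and that properness lets us ignore the boundary $X \setminus U$, which has measure zero since it is a lower-dimensional closed subset. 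None of this is deep, but the statement ``follows immediately from \cite[Thm.~2.14(b)]{Sal98}'' precisely because Salberger has already packaged all of this; if one prefers to argue from scratch, the route above via Lemma \ref{lem:Hensel} is the natural one.
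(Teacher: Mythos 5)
Your argument is correct and self-contained. The paper's own ``proof'' is a one-line citation to Salberger \cite[Thm.~2.14(b)]{Sal98}, which packages precisely the computation you carry out: that for all but finitely many $\fp$ the local Tamagawa density of a mod-$\fp^m$ congruence condition on a smooth proper $\oo_\fp$-model with the model metric equals the naive count divided by $(\Norm\fp)^{mn}$. Your route via Lemma~\ref{lem:Hensel} is the natural unpacking of this, and each step you flag as a potential obstacle (smoothness of the model over $\oo_\fp$ away from $S$, extending $\omega$ to a relative generator of $K_{\mathcal{X}/\oo_\fp}$ on charts, agreement of the given adelic metric with the model metric, the identification $X(k_\fp)=\mathcal{X}(\oo_\fp)$ via properness, surjectivity of reduction via smoothness) is handled correctly. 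One cosmetic point worth making explicit: Lemma~\ref{lem:Hensel} as stated counts the fibre of reduction $\bmod\,\fp$, whereas your limit needs the fibres of $\mathcal{X}(\oo_\fp/\fp^{m'})\to\mathcal{X}(\oo_\fp/\fp^m)$ to have uniform size $(\Norm\fp)^{n(m'-m)}$; this is not literally the statement of the lemma but falls immediately out of its proof, since the identification $\widehat{\OO}_{X,x_0}\cong\oo_\fp[[t_1,\ldots,t_n]]$ makes the fibre structure transparent at every level. What you buy with the from-scratch argument is a transparent, self-contained proof tied to the Hensel lemma already in the paper; what the citation buys is brevity, since the statement is a direct specialisation of Salberger's theorem.
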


\subsubsection{Equidistribution}
We now recall the definition of equidistribution of rational points, as given by Peyre  \cite[\S 3]{Pey95} and further developed in \cite[\S2.5]{CLT10}.

\begin{definition} \label{def:equi}
	Let $X$ be an almost Fano variety over a number field $k$ with $X(k) \neq \emptyset$.
	Let $H$ be an anticanonical height function on $X$ with associated Tamagawa measure
	$\tau_H$. We say that the rational points on $X$ 
	are {\em equidistributed} with respect to $H$ and some dense open subset $U \subset X$ 
	if $U(k) \neq \emptyset$ and for any open subset $W \subset X(\Adele_k)$ with $\tau_H(\partial W) = 0$,  we have
	\begin{equation} \label{def:equi_formula}
		\lim_{B \to \infty} \frac{\#\{ x \in U(k) \cap W: H(x) \leq B \}}{\#\{ x \in U(k): H(x) \leq B \}}
		= \frac{\tau_H(W \cap X(\Adele_k)^{\Br})}{\tau_H(X(\Adele_k)^{\Br})}.
	\end{equation}
\end{definition}

As proved in \cite[\S 3]{Pey95}, if the equidistribution property holds with respect to \emph{some}
choice of anticanonical height, then it holds for \emph{all} choices of anticanonical height. Moreover,
the equidistribution property holds if one knows
\eqref{conj:Manin} with Peyre's constant with respect to all choices of adelic 
metric on the anticanonical bundle.  (In fact, it follows from \cite[Prop.~2.5.1]{CLT10} and 
the Stone--Weierstrass theorem that one need only prove this with respect to all \emph{smooth} adelic metrics.)
	
\begin{example} \label{ex:prod_measures}
	Assume that the rational points on $X$ are equidistributed
	with respect to $H$ on a dense open subset $U \subset X$.
	Let $\mathcal{X}$ be a model of $X$ over $\fo_k$ and 
	let $S$ be a finite set of non-zero primes ideals of $k$.  
	Let $m >0$
	and $\Omega_{\fp^m} \subset \mathcal{X}(\oo_\fp / \fp^m )$ for $\fp \in S$.
	Then Lemma~\ref{lem:WWA}, Lemma \ref{lem:Salberger} and \eqref{def:equi_formula}
	imply that
	\begin{equation*}
		\lim_{B \to \infty} 
		\frac{\#\{ x \in U(k): H(x) \leq B, \,x \bmod \fp^m \in \Omega_{\fp^m} \, \forall \fp \in S \}}
		{N(U,H,B)}
		\ll \prod_{\fp \in S} \frac{ \#\Omega_{\fp^m}}{\#\mathcal{X}(\oo_\fp/\fp^m)},
	\end{equation*}	
	where the implied constant
depends on $\mathcal{X},H,m$ but 
is \emph{independent} of $S$ and  $\Omega_{\fp^m}$.
\end{example} 

\begin{remark}
	The equidistribution property can be viewed as a quantitative version of weak approximation;
	indeed, if $W$ is an open neighbourhood of a point $(x_v) \in X(\Adele_k)^{\Br}$ with $\tau_H(\partial W) = 0$,
	then \eqref{def:equi_formula} implies that $W$ contains many rational points. In particular
	$\overline{X(k)} = X(\Adele_k)^{\Br}$ and so the Brauer--Manin obstruction is the only obstruction to weak approximation.
	Moreover, weak approximation holds on $X$ away from the finite set of places $S$ by Lemma \ref{lem:WWA}.
\end{remark}

\begin{remark}
	A natural problem is to formulate a 
	version of Definition \ref{def:equi} for the ``thin'' version of Manin's conjecture. 
	Here one should replace the condition $x \in U(k)$ from the counting functions in \eqref{def:equi_formula}
	by the condition
	that $x$ lies in the complement of an appropriate thin subset of $X(k)$. It would be interesting to see whether this version holds for the examples considered by Le Rudulier in \cite{LeR13}.
\end{remark}
	
\subsection{Thin sets} \label{sec:thin}
We recall Serre's definition of thin sets from \cite[\S3.1]{Ser08}. 

\begin{definition}
Let $X$ be an integral variety over a field
$F$. 
A {\em type $I$ thin subset}  is a set of the form $Z(F) \subset X(F)$, where $Z$ is a
closed subvariety with $Z \neq X$. A {\em type $II$ thin subset} is a set of the form
$\pi(Y(F))$, where $\pi : Y \to X$ is a generically finite dominant morphism with $\deg \pi \geq 2$ and $Y$ geometrically integral. A {\em thin subset} is a subset  contained in a finite
union of thin subsets of type $I$ and $II$.
\end{definition}

To prove Theorem \ref{thm:equi_thin}, we require  information on  thin sets modulo $\fp$.

\begin{lemma} \label{prop:thin}
	Let $k$ be a number field, let
	$X \to \Spec \oo_k$ be a smooth integral finite type scheme of relative dimension $n$
	and $\Upsilon \subset X(\oo_k)$ be thin in $X(k)$.
	\begin{enumerate}
		\item If $\Upsilon$ has type $I$ then 
		$\#(\Upsilon  \bmod \fp) \ll_{\Upsilon} (\Norm \fp)^{n-1}$.
		\item If $\Upsilon$ has type $II$, then there exists a finite Galois extension $k_\Upsilon/k$ and a constant $c_\Upsilon \in (0,1)$
		such that for all primes $\fp$ of $\fo_k$ which  split completely in $k_\Upsilon$ we have
		$ \#(\Upsilon \bmod \fp) \leq c_\Upsilon (\Norm \fp)^n + O_{\Upsilon}( (\Norm \fp)^{n-1/2})$.
	\end{enumerate}
\end{lemma}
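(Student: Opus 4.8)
The plan is to treat the two cases separately, in each case reducing to standard point-counting over finite fields. For part (1), if $\Upsilon$ has type $I$, then by definition $\Upsilon \subset Z(k)$ for some proper closed subvariety $Z \subsetneq X$. First I would take the scheme-theoretic closure $\mathcal Z$ of $Z$ inside $\mathcal X = X$ (the given model over $\fo_k$). Then every point of $\Upsilon \bmod \fp$ lies in $\mathcal Z(\FF_\fp)$, so it suffices to bound $\#\mathcal Z(\FF_\fp)$. Since $Z$ has dimension at most $n-1$, the generic fibre of $\mathcal Z \to \Spec \fo_k$ has dimension $\le n-1$, and for all but finitely many $\fp$ the fibre $\mathcal Z \otimes \FF_\fp$ also has dimension $\le n-1$ (fibre dimension is upper semicontinuous, or one shrinks $\Spec\fo_k$); combining the Lang--Weil estimates \cite{LW54} applied to the irreducible components with the trivial bound $\#\mathcal Z(\FF_\fp) \ll (\Norm\fp)^{\dim}$ for the bad primes gives $\#(\Upsilon \bmod \fp) \ll_\Upsilon (\Norm\fp)^{n-1}$, with the implied constant absorbing the finitely many bad primes.

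For part (2), suppose $\Upsilon \subset \pi(Y(k))$ with $\pi\colon Y \to X$ generically finite dominant of degree $d \ge 2$ and $Y$ geometrically integral. The idea is that a point $x \in X(\FF_\fp)$ can only lie in the image of $Y(\FF_\fp)$ if the fibre $Y_x$ has an $\FF_\fp$-point, and once we pass to a splitting field of the (geometric) monodromy this happens for a definite proportion $c_\Upsilon < 1$ of points. Concretely, I would first spread out: replace $Y \to X$ by a dominant finite type morphism $\mathcal Y \to \mathcal X$ of $\fo_k$-schemes, shrinking $\Spec\fo_k$ so that $\mathcal Y \to \mathcal X$ is finite étale of degree $d$ over a dense open $\mathcal U \subset \mathcal X$ whose complement has relative dimension $\le n-1$ over $\fo_k$. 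Let $k_\Upsilon/k$ be a finite Galois extension such that the geometric monodromy group $G = \Gal$ of the generic fibre of $\mathcal Y \to \mathcal U$ is realised over $k_\Upsilon$, i.e. the arithmetic and geometric monodromy coincide after base change to $k_\Upsilon$; equivalently $k_\Upsilon$ contains the field of constants of the Galois closure of $Y/X$. For a prime $\fp$ of $\fo_k$ splitting completely in $k_\Upsilon$, the Lang--Weil / Chebotarev-type equidistribution for the étale covering $\mathcal Y \to \mathcal U$ (as in \cite{LW54}, or via the Grothendieck--Lefschetz trace formula) shows that the number of $x \in \mathcal U(\FF_\fp)$ whose fibre contains an $\FF_\fp$-point equals $c_\Upsilon \cdot (\Norm\fp)^n + O_\Upsilon((\Norm\fp)^{n-1/2})$, where
$$
c_\Upsilon = \frac{\#\{\sigma \in G : \sigma \text{ has a fixed point on the degree-}d\text{ set}\}}{\#G} \in (0,1),
$$
the inequality $c_\Upsilon < 1$ holding because $d \ge 2$ forces some element of $G$ (e.g. a generator of a transitive cyclic, or simply the fact that a transitive group of degree $\ge 2$ is not generated by point-stabilisers together with the identity—concretely, no transitive $G$ of degree $d\ge 2$ has every element fixing a point, by Jordan's theorem) to act without fixed points. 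Points of $\Upsilon \bmod \fp$ lying outside $\mathcal U(\FF_\fp)$ number $O_\Upsilon((\Norm\fp)^{n-1})$ by the argument of part (1), and are absorbed into the error term. This yields $\#(\Upsilon \bmod \fp) \le c_\Upsilon (\Norm\fp)^n + O_\Upsilon((\Norm\fp)^{n-1/2})$.

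The main obstacle is the careful spreading-out and bookkeeping in part (2): one must choose the model $\mathcal Y \to \mathcal X$ and the open set $\mathcal U$ so that the generic monodromy is faithfully captured by reduction at primes split in $k_\Upsilon$, and one must invoke the correct form of the Lang--Weil estimate with a power-saving error (the $(\Norm\fp)^{n-1/2}$, which comes from the square-root cancellation in the trace formula for the constructible sheaf $\pi_* \QQ_\ell$ on $\mathcal U \otimes \FF_\fp$, or more elementarily from Lang--Weil applied fibrewise). Identifying $k_\Upsilon$ precisely—it is the field of constants of the Galois closure—and checking $c_\Upsilon < 1$ are the conceptual points; everything else is routine. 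Note finally that for a \emph{finite union} of thin sets of types $I$ and $II$ one simply takes $k_\Upsilon$ to be the compositum of the fields attached to the type $II$ pieces and $c_\Upsilon$ the maximum of the corresponding constants (still $<1$), so the stated form for a single thin set suffices.
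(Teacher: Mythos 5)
Your proposal is correct. For part (1) it is exactly the paper's argument: Lang--Weil applied to the irreducible components of the closure of $\Upsilon$ in the model, with the finitely many bad primes absorbed into the implied constant. For part (2) the paper gives no argument at all --- it simply cites \cite[Thm.~3.6.2]{Ser08} --- whereas you reconstruct the proof of that cited theorem, and your reconstruction is faithful to Serre's: spread out so that $\pi$ is finite \'etale of degree $d$ over a dense open $\mathcal{U}$, absorb the complement via part (1), take $k_\Upsilon$ to be the field of constants of the Galois closure so that arithmetic and geometric monodromy agree at completely split primes, count the $x\in\mathcal{U}(\FF_\fp)$ whose Frobenius class fixes a point by Chebotarev/Lang--Weil with square-root error, and obtain $c_\Upsilon<1$ from Jordan's theorem on derangements in transitive groups of degree $\ge 2$ (and $c_\Upsilon>0$ since the identity fixes points). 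Two details you pass over quickly but which your set-up does handle: (i) to conclude that $x\in\Upsilon$ with $x\bmod\fp\in\mathcal{U}(\FF_\fp)$ reduces into $\pi(\mathcal{Y}(\FF_\fp))$, one needs the $k$-point of $Y$ above $x$ to extend to an $\fo_\fp$-point, which is where finiteness (hence properness) of $\pi$ over $\mathcal{U}$ enters; (ii) the equidistribution step needs the reductions of the Galois closure and of its quotients by subgroups to stay geometrically irreducible, which holds for all but finitely many $\fp$ by standard constructibility arguments. Neither is a gap in the approach. The trade-off is that the paper's citation is shorter, while your version makes explicit where $k_\Upsilon$ and $c_\Upsilon$ come from --- which is precisely what the application in the proof of Theorem \ref{thm:equi_thin} relies on, since only primes split in $k_\Upsilon$ are sieved there. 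Your closing remark about reducing a general thin set to finitely many pieces of types $I$ and $II$ is also how the paper proceeds in that proof.
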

\begin{proof}
	The first part  follows from applying the Lang--Weil
	estimates \cite{LW54}
	to each component of the closure of $\Upsilon$.
	The second part is  \cite[Thm.~3.6.2]{Ser08}.
\end{proof}

\subsubsection{Proof of Theorem \ref{thm:equi_thin}}
To prove the theorem we may reduce to the case of thin sets of of type $I$ or $II$. The case
of type $I$ is easy, so we assume that $\Upsilon$ is a thin set of type $II$.
Let $z > 1$ and 
let $\mathcal{P}$ be the set of primes $\fp$ in $\fo_k$ which split completely in $k_\Upsilon$.
  As the rational points on $X$ are equidistributed, 
it follows from Example \ref{ex:prod_measures},  Lemma   \ref{prop:thin}, and the Lang--Weil estimates \cite{LW54}
that
\begin{align*}
	\lim_{B \to \infty} &
\frac{\#\{ x \in U(k): H(x) \leq B, ~x \bmod \fp \in (\Upsilon \bmod{\fp}) \, \forall \Norm \fp \leq z \}}
	{N(U,H,B)}\\
&	\hspace{5cm}\ll_{X,H}
	\prod_{\substack{\fp \in \mathcal{P}\\ \Norm \fp\leq z}}
	\left(
	c_\Upsilon+O_{\Upsilon}\left(\frac{1}{\sqrt{\Norm \fp}}\right)
	\right).
\end{align*}
The set $\mathcal{P}$ is infinite
by the Chebotarev density theorem. Since $0 < c_\Upsilon < 1$, the result  follows on taking $z \to \infty$. \qed

\subsection{Local solubility densities}
Let $k$ be a number field.
We gather some tools for the proof of Theorem \ref{thm:equi_pseudo_split}. This is proved
with
 an analogous strategy to Theorem \ref{thm:equi_thin}, by 
deriving upper bounds for the size of the set in question modulo $\fp^m$, for some $m$. In Lemma  \ref{prop:thin} it was sufficient to take  $m=1$, but 
as first noticed by Serre  \cite{Ser90} (and further  developed in \cite{LS16}),  for fibrations one needs to sieve modulo higher powers of $\fp$.
For example, consider the conic bundle 
\begin{equation} \label{eqn:conic}
	x^2 + y^2 = tz^2 \quad \subset \PP^1_\ZZ \times \AA^1_\ZZ.
\end{equation}
For any odd prime $p$, the fibre over \emph{every} $\FF_p$-point of $\AA_{\ZZ}^1$ has an $\FF_p$-point; but there are clearly
fibres over $\QQ$ which have no $\QQ_p$-point. So
 sieving  modulo $p$ gives no information. One obtains good upper bounds here by sieving modulo $p^2$, using the fact that if $p \equiv 3 \bmod 4$ and the $p$-adic valuation of $t$ is equal to $1$, then the corresponding conic \eqref{eqn:conic} has no $\QQ_p$-point.

These observations were greatly generalised by Loughran and Smeets in \cite{LS16}. The condition that the $p$-adic valuation of $t$ is $1$ can be interpreted geometrically as requiring that a certain intersection is transverse over $p$ (see Definition \ref{def:trans}). The required generalisation is the  following ``sparsity theorem'' from \cite{LS16}, which gives an explicit
criterion for non-solubility at sufficiently large primes.

\begin{proposition} \label{prop:sparsity} 
Let $\pi: Y \to X$ be a dominant morphism of finite type $\oo_k$-schemes with $Y_k$ and $X_k$ smooth geometrically integral $k$-varieties. 
Let $T$ be a reduced divisor in $X$ such that the restriction of $\pi$ to $X\setminus T$ is smooth. 
Then there exists a finite set of prime ideals $S$ and a closed subset $Z\subset T_{\oo_{k,S}}$ containing the singular locus of $T_{\oo_{k,S}}$ and of codimension $2$ in $X_{\oo_{k,S}}$, such that for all non-zero prime ideals $\fp \notin S$ the following holds:

Let $x \in X(\fo_\fp)$ be such that the image of ${x}: \Spec \fo_\fp \to X$ meets $T_{\oo_{k,S}}$ transversally over $\fp$ outside of $Z$ and such that the fibre above ${x} \bmod \fp \in T(\FF_\fp)$ is non-split. Then 
$(Y \times_{X} {x})(\fo_\fp)=\emptyset$; i.e.~the fibre over $x$ has no $\fo_\fp$-point.
\end{proposition}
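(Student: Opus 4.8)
The plan is to reduce the statement to a local computation over the residue field $\FF_\fp$, using the transversality hypothesis to control the local structure of $\pi$ near the point $x \bmod \fp$. First I would choose the finite set $S$ so that, away from $S$, the morphism $\pi$, the varieties $Y$ and $X$, and the divisor $T$ all have good reduction (integral models that are flat and of finite type, with $Y_{\fo_{k,S}} \to \Spec \fo_{k,S}$ and $X_{\fo_{k,S}} \to \Spec\fo_{k,S}$ having smooth geometrically integral fibres over the complement of $T$), and so that standard spreading-out results apply: the irreducible components of the geometric fibres of $\pi$ over the generic point of $T$ spread out to relative components over a dense open of $T_{\fo_{k,S}}$, the multiplicities are preserved, and the Galois action on these components over the residue field agrees with the one over the function field of $T$. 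Enlarging $S$ further and letting $Z \subset T_{\fo_{k,S}}$ be the union of the singular locus of $T_{\fo_{k,S}}$ together with the loci where the components fail to be flat or fail to be geometrically reduced relative to $T$, we may arrange that $Z$ has codimension $2$ in $X_{\fo_{k,S}}$ and that over $(T \setminus Z)_{\fo_{k,S}}$ the picture is as clean as over the generic point.

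Next I would do the local analysis. Fix $\fp \notin S$ and $x \in X(\fo_\fp)$ meeting $T = T_{\fo_{k,S}}$ transversally over $\fp$ outside $Z$, with non-split fibre over $x_0 := x \bmod \fp \in (T\setminus Z)(\FF_\fp)$. Suppose for contradiction that there is a point $y \in (Y\times_X x)(\fo_\fp)$, and let $y_0 = y \bmod \fp$. Then $y_0$ lies in some irreducible component $W$ of the geometric fibre $Y_{x_0} \otimes \overline{\FF_\fp}$ that is defined over $\FF_\fp$ (it is $\Gal(\overline{\FF_\fp}/\FF_\fp)$-stable, since $y_0$ is an $\FF_\fp$-point of it). The key point is that, because $x$ meets $T$ transversally over $\fp$, the local ring of $X$ at the image of $x$ in the "normal direction" to $T$ is cut out by a uniformiser; this lets me pull back the relative components of $\pi$ over $T$ to the $\fo_\fp$-point $x$ and deduce that the component $W$ must have multiplicity $1$ in the fibre. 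Indeed, if $W$ had multiplicity $\geq 2$, then the scheme-theoretic fibre $Y \times_X x$ over $\Spec \fo_\fp$ would, near $y_0$, be non-reduced with the reduction supported on the mod-$\fp$ fibre, and a point $y \in (Y\times_X x)(\fo_\fp)$ reducing to $y_0$ would force the local equation defining $W$ with multiplicity to have a solution in $\fo_\fp$ with valuation a positive integer strictly less than the multiplicity times the transversality index $1$ — this is the standard valuation-divisibility obstruction (as in the conic-bundle example \eqref{eqn:conic} and in \cite{Ser90}, \cite{LS16}), and it is contradicted because transversality pins the valuation down exactly. Hence $W$ is a multiplicity-one geometrically $\FF_\fp$-stable component of $Y_{x_0}\otimes\overline{\FF_\fp}$, contradicting the assumption that the fibre over $x_0$ is non-split.

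To make the "valuation-divisibility obstruction" precise I would argue as follows. Work Zariski-locally around $y_0$ on an affine patch of $Y_{\fo_{k,S}}$. After shrinking, the map $\pi$ factors so that $T$ is cut out by a single equation $t=0$ on $X$, and the component $W$ appears in the fibre $\{t=0\}$ inside $Y$ with some multiplicity $e \geq 1$: concretely there is a regular function $g$ on the patch cutting out $W$ (reduced) whose relation to $\pi^*t$ is $\pi^* t = g^{e}\cdot(\text{unit})$ along $W$ generically, by the good-reduction/spreading-out choices defining $Z$. Pulling back by $y \colon \Spec \fo_\fp \to Y$ gives $x^* t = (y^*g)^{e}\cdot(\text{unit})$ in $\fo_\fp$; by Definition \ref{def:trans} the left side has valuation exactly $1$ (transversality), so $e \mid 1$, forcing $e = 1$. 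This is exactly the step I expect to be the main obstacle: one has to be careful that the relation $\pi^* t = g^e\cdot(\text{unit})$ really holds in a neighbourhood of $y_0$ in the integral model and not merely generically on $W$, which is why $Z$ must be chosen (of codimension $2$ in $X$, after possibly enlarging it and $S$) to excise the locus where components collide, where multiplicities jump, or where the relative normal crossings structure degenerates. Once that geometric input is in place, the contradiction with non-splitness of the fibre over $x_0$ is immediate, completing the proof. \qed
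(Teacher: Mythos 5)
The paper's own proof of this proposition is a one-line citation: the rational-point statement is \cite[Thm.~2.8]{LS16}, and the integral adaptation is declared straightforward and omitted. Your argument reconstructs what that reference does: spread out the geometric components of the fibre over the generic point of $T$, with their multiplicities and Galois action, to a dense open $T \setminus Z$; then use the transversality pull-back relation to force the unique component through the reduction $y_0$ to have multiplicity one, contradicting non-splitness. This is the argument one expects behind [LS16, Thm.~2.8], so you are filling in the deferred reference rather than taking a different route, and the overall shape of the reasoning is correct.

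There is, however, a gap in the order of deductions. You claim at the outset that $y_0$ lies on a geometric component $W$ of $Y_{x_0} \otimes \overline{\FF_\fp}$ that is Galois-stable ``since $y_0$ is an $\FF_\fp$-point of it.'' That inference is only valid once you know $y_0$ lies on a \emph{unique} geometric component; if $y_0$ sits on the intersection of two Galois-conjugate components $W$ and $W'$, the argument fails because $\sigma(W)$ could be $W'$. Fortunately your transversality computation delivers exactly the needed uniqueness, but it must come first. Writing $\pi^*t = \prod_{i} g_i^{e_i}\cdot(\text{unit})$ near $y_0$ on the $\fo_\fp$-model (which requires the smoothness/regularity of $Y_{\fo_{k,S}}$ and the excision of $Z$ that you set up), and pulling back by $y$, gives
\[
1 = \mathrm{val}\bigl(x^*t\bigr) = \sum_i e_i \,\mathrm{val}\bigl(y^*g_i\bigr) \geq \sum_i e_i \geq r,
\]
since $y_0 \in V(g_i)$ forces $\mathrm{val}(y^*g_i)\geq 1$. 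Hence $r=1$, so $y_0$ lies on a unique component, which moreover has multiplicity $e_1=1$; only now is the ``$\FF_\fp$-point implies Galois-stable'' argument legitimate. With the valuation computation placed before the Galois-stability claim (and the usual compatibility, ensured by the choice of $Z$ and $S$, between the spread-out components over $T\setminus Z$ and the geometric components of the closed fibre over $x_0$), the proof goes through.
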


\begin{proof}
For rational points this is proved in \cite[Thm.~2.8]{LS16}. The adaptation to integral points is straightforward and omitted.
\end{proof}

The  following is the main result of this section. It is phrased in terms of the invariant $\Delta(\pi)$
that was defined in
\eqref{def:Delta}.

\begin{proposition} \label{thm:p^2}
	Let $\pi: Y \to X$ be a dominant morphism of finite type $\oo_k$-schemes 
	with $Y_k$ and $X_k$ smooth geometrically integral $k$-varieties. 
	Assume that the generic fibre of $\pi$ is geometrically integral
	and that $Y(\oo_\fp) \neq \emptyset$ for all primes $\fp$.  
	For any non-zero prime ideal $\fp\subset \fo_k$ let 
	$$
	\Theta_\fp = \#\{ x \in X(\oo_k/\fp^2) : x \notin \pi(Y(\oo_\fp)) \bmod \fp^2\}.
	$$
	Then 
	\begin{align}
			 \label{eqn:upper_bound}
			&\bullet\qquad \frac{\Theta_\fp}{\#X(\oo_k/\fp^2)}
			\ll \frac{1}{\Norm \fp}, \\	
			 \label{eqn:sum_Sp}
			 &\bullet\qquad
			\sum_{\substack{ \Norm \fp \leq B}} 
			\frac{\Theta_\fp \log \Norm \fp}{\#X(\fo_k/\fp^2)}
			 \sim  \Delta(\pi)\log B, \text{ and } \\
		\label{eqn:Euler_product}
		&\bullet\qquad
			\prod_{\substack{ \Norm \fp \leq B}} 			
			\frac{\#(\pi(Y(\oo_\fp)) \bmod \fp^2)}
			{\#X(\oo_k/\fp^2)} \asymp \frac{1}{(\log B)^{\Delta(\pi)}}.
	\end{align}
\end{proposition}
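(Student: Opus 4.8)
The plan is to establish \eqref{eqn:upper_bound} first, then deduce \eqref{eqn:sum_Sp} from it together with a Chebotarev-type analysis of the transverse contributions, and finally obtain \eqref{eqn:Euler_product} from \eqref{eqn:sum_Sp} by a routine Mertens-type argument.

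For \eqref{eqn:upper_bound}, the key point is to bound $\Theta_\fp$ by counting those $x \in X(\fo_k/\fp^2)$ that \emph{could} fail to lift to a point of $\pi(Y(\oo_\fp))$. Since $Y(\oo_\fp)\neq\emptyset$ and the generic fibre is geometrically integral, the smooth locus of $\pi$ is the complement of a divisor $T$; a point $x$ whose reduction mod $\fp$ lies outside $T(\FF_\fp)$ always lifts to an $\oo_\fp$-point of $Y$ (by smoothness and Hensel, Lemma~\ref{lem:Hensel}). So $x \notin \pi(Y(\oo_\fp)) \bmod \fp^2$ forces $x \bmod \fp \in T(\FF_\fp)$, giving $\Theta_\fp \leq \#T(\FF_\fp)\cdot(\Norm\fp)^{n}$ by Lemma~\ref{lem:Hensel}; combined with $\#X(\fo_k/\fp^2) = \#X(\FF_\fp)(\Norm\fp)^n \gg (\Norm\fp)^{2n}$ and the Lang--Weil bound $\#T(\FF_\fp) \ll (\Norm\fp)^{n-1}$, this yields \eqref{eqn:upper_bound}.

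For \eqref{eqn:sum_Sp}, I would discard a finite set of bad primes (absorbing the error via \eqref{eqn:upper_bound}) and, for $\fp \notin S$, decompose $\Theta_\fp$ according to the component $D \subset T$ through which $x \bmod \fp$ passes. Using Corollary~\ref{cor:transverse} one sees that the points of $X(\fo_k/\fp^2)$ meeting a given geometric component $D$ transversally contribute $\#D(\FF_\fp)(\Norm\fp)^n + O((\Norm\fp)^{2n-2})$ points, while the non-transverse points and the points near the codimension-$\geq 2$ bad locus $Z$ of Proposition~\ref{prop:sparsity} contribute only $O((\Norm\fp)^{2n-1})$. By Proposition~\ref{prop:sparsity}, among the transverse points above $D$ exactly those lying over a \emph{non-split} fibre fail to be in $\pi(Y(\oo_\fp))$; a Chebotarev argument (applied to the Galois action of $\Gamma_D(\pi)$ on the components, exactly as in \cite[\S4]{LS16}) shows that the proportion of $\FF_\fp$-points of $D$ over which the fibre is non-split is $(1-\delta_D(\pi)) + O((\Norm\fp)^{-1/2})$ on average over $\fp$. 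Summing over the finitely many $D \in X^{(1)}$ lying in $T$, weighting by $\log\Norm\fp$, and invoking the prime ideal theorem (in the form $\sum_{\Norm\fp \leq B}\log\Norm\fp \sim B$ and the Chebotarev refinement) gives $\sum_{\Norm\fp\leq B}\Theta_\fp \log\Norm\fp/\#X(\fo_k/\fp^2) \sim \sum_D (1-\delta_D(\pi))\log B = \Delta(\pi)\log B$. The main obstacle here is bookkeeping the various error terms uniformly and correctly identifying $\#(\pi(Y(\oo_\fp))\bmod\fp^2)$ with the complement count coming from Proposition~\ref{prop:sparsity}; this requires care that a transverse point over a \emph{split} fibre genuinely does lift (which again follows from Hensel's lemma as in the remark after Corollary~\ref{cor:transverse}).

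Finally, \eqref{eqn:Euler_product} follows from \eqref{eqn:sum_Sp} by a standard argument: writing $\Theta_\fp/\#X(\fo_k/\fp^2) = \lambda_\fp$, we have $\lambda_\fp \ll 1/\Norm\fp$ by \eqref{eqn:upper_bound}, so $\log\prod_{\Norm\fp\leq B}(1-\lambda_\fp) = -\sum_{\Norm\fp\leq B}\lambda_\fp + O(1)$, and partial summation applied to \eqref{eqn:sum_Sp} converts $\sum_{\Norm\fp\leq B}\lambda_\fp \log\Norm\fp \sim \Delta(\pi)\log B$ into $\sum_{\Norm\fp\leq B}\lambda_\fp = \Delta(\pi)\log\log B + O(1)$. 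Hence the product is $\asymp (\log B)^{-\Delta(\pi)}$, as claimed.
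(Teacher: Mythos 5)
Your proposal follows essentially the same route as the paper's proof: Lang--Weil plus Hensel for the upper bound \eqref{eqn:upper_bound}, Proposition \ref{prop:sparsity} combined with the transversality counts of Proposition \ref{prop:transverse} and Corollary \ref{cor:transverse} for the lower bound on $\Theta_\fp$, the Chebotarev-type count of non-split fibres from \cite[Prop.~3.10]{LS16} (via \cite[Thm.~9.11]{Ser08}) plus partial summation for \eqref{eqn:sum_Sp}, and the identical logarithm/partial-summation argument for \eqref{eqn:Euler_product}. The one step to tighten is your assertion that smoothness of $\pi$ away from $T$ and Hensel's lemma alone force $x$ to lift when $x \bmod \fp \notin T(\FF_\fp)$: smoothness of the fibre does not by itself give an $\FF_\fp$-point, and one must also invoke Lang--Weil on the (split, indeed geometrically integral, after enlarging $T$ and discarding finitely many primes) fibres to produce a smooth $\FF_\fp$-point before lifting --- which is exactly how the paper argues.
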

\begin{proof}
	Let $n = \dim X_k$. To begin with
 we claim that 
	\begin{equation} \label{eqn:claim_non_split}
		\Theta_\fp = \#\{x \in X(\FF_\fp) : \pi^{-1}(x) \text{ non-split}\}(\Norm \fp)^{n} + O\left((\Norm \fp)^{2(n-1)}\right).
	\end{equation}
	To prove this, let $T$ be a divisor of $X$ which contains the singular locus of $\pi$
	and let $x \in X(\oo_k/\fp^2)$. If $\pi^{-1}(x \bmod \fp)$ is split then, by 
	the Lang--Weil estimates \cite{LW54} and Hensel's lemma, for large enough  $\fp$  we find that the fibre over $x$ has
	an $\oo_k/\fp^2$-point. Thus for large enough $\fp$ we have
	\begin{equation} \label{eqn:refine}
		\Theta_\fp = \# \left\{ x \in X(\oo_k/\fp^2):
		\begin{array}{l}
			x \notin \pi(Y(\oo_\fp)) \bmod \fp^2, \,x \bmod \fp \in T, \\
			\pi^{-1}(x \bmod \fp) \mbox{ is non-split}
		\end{array} \right\}.
	\end{equation}
	However the Lang--Weil estimates 
	and Lemma \ref{lem:Hensel} yield
	\begin{equation} \label{eqn:LW_p^2}
		\#X(\oo_k/\fp^2) = (\Norm \fp)^n\#X(\FF_\fp) = (\Norm \fp)^{2n} + O\left((\Norm \fp)^{2n-{1/2}}\right)
	\end{equation}
	and 
	$$		\# \{ x \in X(\oo_k/\fp^2): x \bmod \fp \in T\} \ll (\Norm \fp)^{2n-1}.$$
	These and \eqref{eqn:refine} already yield the upper bound  \eqref{eqn:upper_bound}.
	Moreover, Lemma \ref{lem:Hensel} and \eqref{eqn:refine} show that $ \Theta_\fp \leq \#\{x \in X(\FF_\fp) : \pi^{-1}(x) \text{ non-split}\}(\Norm \fp)^{n}$, 
	which gives the upper bound in 
\eqref{eqn:claim_non_split}.
	For the lower bound, let $Z$ be as in Proposition \ref{prop:sparsity}. 
	Then Proposition~\ref{prop:sparsity} and Proposition \ref{prop:transverse} (cf.~the proof
	of Corollary \ref{cor:transverse}) give
	\begin{align*}
		\Theta_\fp & \geq \# \left\{ x \in X(\oo_k/\fp^2):
		\begin{array}{l}
			x \bmod \fp \in T\setminus Z, \, x \text{ meets } 
			T \text{ transversely above }\fp,\\
			\pi^{-1}(x \bmod \fp) \mbox{ is non-split}
		\end{array} \right\} \\
		 		& = \#\{x \in X(\FF_\fp): \pi^{-1}(x) \text{ non-split}\}(\Norm \fp)^{n}
		 + O\left((\Norm \fp)^{2(n-1)})\right),
	\end{align*}
	whence \eqref{eqn:claim_non_split}. 
Here, we have used the fact that 
$$ 
\{x \in T(\FF_\fp): \pi^{-1}(x) \text{ non-split}\}= \{x \in X(\FF_\fp): \pi^{-1}(x) \text{ non-split}\}
$$
for large enough $\fp$.	
	Next, we claim that 
	\begin{equation} \label{eqn:PNT}
		 \sum_{\Norm \fp \leq B} \#\{x \in X(\FF_\fp) : \pi^{-1}(x) \text{ non-split}\} 
	= \frac{\Delta(\pi)B^n}{\log( B^n)}+O\left(\frac{B^n}{(\log B)^2}\right).
	\end{equation}
Indeed, an easy modification of the proof of  \cite[Prop.~3.10]{LS16}, which is stated without an explicit error term, shows that 
$$
		 \sum_{\Norm \fp \leq B} \#\{x \in X(\FF_\fp) : \pi^{-1}(x) \text{ non-split}\} 
=
 \Delta(\pi)
\sum_{\Norm \fp \leq B} (\Norm \fp)^{n-1} +O\left(\frac{B^n}{(\log B)^2}\right),
$$
on using Serre's version of the Chebotarev density theorem \cite[Thm.~9.11]{Ser08}. The claim  \eqref{eqn:PNT}  follows from an application of the prime ideal theorem and partial summation.
	We  obtain \eqref{eqn:sum_Sp} using \eqref{eqn:claim_non_split}, \eqref{eqn:LW_p^2}, 
	\eqref{eqn:PNT} and a further application of partial summation.
Next, taking logarithms it follows from 
\eqref{eqn:upper_bound} that
\begin{align*}
\log \prod_{\substack{ \Norm \fp \leq B}} 			
			\frac{\#(\pi(Y(\oo_\fp)) \bmod \fp^2)}
			{\#X(\oo_k/\fp^2)}
&=			\sum_{\substack{  \Norm \fp \leq B}} \log\left( 1 - \frac{\Theta_\fp}
		{\#X(\fo_k/\fp^2)}\right)  \\
		&= -\sum_{\substack{  \Norm \fp \leq B}} \frac{\Theta_\fp}{\#X(\fo_k/\fp^2)} + O(1).
\end{align*}
On combining this with 
\eqref{eqn:sum_Sp} and partial summation, we deduce that 
$$
\log \prod_{\substack{ \Norm \fp \leq B}} 			
			\frac{\#(\pi(Y(\oo_\fp)) \bmod \fp^2)}
			{\#X(\oo_k/\fp^2)}
		= 
	-\Delta(\pi)\log\log B +O(1).
		$$
The bounds recorded in \eqref{eqn:Euler_product} are now obvious. 
\end{proof}

We give a consequence which is required for the proof of Theorem \ref{thm:quadrics_Delta}. To achieve this we use the  following version of Wirsing's theorem over number fields.

\begin{lemma} \label{lem:Wirsing} 
	Let $g$ be a  non-negative multiplicative arithmetic function on the non-zero ideals of $\oo_k$.
	Assume that there exist $\alpha, \beta > 0$ such that
	\begin{equation} \label{eqn:Wirsing_1}
		\sum_{\Norm \fp \leq x} \frac{g(\fp) \log \Norm \fp }{\Norm \fp}\sim \alpha \log x
	\end{equation}
	as $x \to \infty$ and $g(\fp^v) \leq \beta^v$ for all non-zero 
	prime ideals $\fp$ and all $ v \in \NN$.
	Then there exists $c_{g}>0$ such that 
	$$\sum_{\Norm \fa \leq x} g(\fa) \sim c_{g}
	\frac{x}{\log x}\prod_{\Norm\fp\leq x} \left(1+\frac{g(\fp)}{\Norm\fp} + 
	\frac{g(\fp^2)}{\Norm\fp^2} + \dots \right).$$
\end{lemma}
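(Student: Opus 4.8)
The plan is to reduce the number-field statement to the classical Wirsing theorem over $\QQ$ by pushing everything down via the norm map. First I would note that the hypothesis \eqref{eqn:Wirsing_1} is naturally phrased in terms of a Dirichlet series: setting $F(s) = \sum_{\fa} g(\fa) \Norm\fa^{-s}$, the Euler product $F(s) = \prod_{\fp}\bigl(1 + g(\fp)\Norm\fp^{-s} + g(\fp^2)\Norm\fp^{-2s} + \cdots\bigr)$ converges for $\re s > 1$ (using $g(\fp^v)\leq\beta^v$ together with the fact that there are $\ll x/\log x$ primes $\fp$ with $\Norm\fp\leq x$), and the growth condition \eqref{eqn:Wirsing_1} says, after comparison with $-\zeta_k'/\zeta_k$, that $F(s)$ behaves like $\zeta_k(s)^{\alpha}$ near $s=1$ in the sense that $\sum_{\fp} g(\fp)\Norm\fp^{-s}\log\Norm\fp \sim \alpha/(s-1)$. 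The cleanest route, and the one I would take, is to define the companion multiplicative function $\tilde g$ on positive integers by $\tilde g(n) = \sum_{\Norm\fa = n} g(\fa)$; since distinct prime ideals of the same norm contribute independently and $\Norm$ is multiplicative on coprime ideals, $\tilde g$ is multiplicative, and $\sum_{n\leq x}\tilde g(n) = \sum_{\Norm\fa\leq x} g(\fa)$, so it suffices to verify the two Wirsing hypotheses for $\tilde g$ over $\ZZ$.

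The key steps, in order: (i) Check $\sum_{p\leq x}\tilde g(p)(\log p)/p \sim \alpha\log x$. For a rational prime $p$, $\tilde g(p) = \sum_{\fp\mid p,\, f(\fp)=1} g(\fp)$ comes only from degree-one primes above $p$, while primes $\fp$ with residue degree $\geq 2$ contribute to $\tilde g(p^f)$ with $p^f$ having few representatives; the total contribution of the higher-degree primes to $\sum_{\Norm\fp\leq x}g(\fp)(\log\Norm\fp)/\Norm\fp$ is $O(1)$ because $\sum_{\fp:\,f(\fp)\geq 2}\Norm\fp^{-1}<\infty$ and $g(\fp)\leq\beta$. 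Hence \eqref{eqn:Wirsing_1} transfers verbatim to $\tilde g$ with the same $\alpha$. (ii) Check $\tilde g(p^v)\leq\beta'^v$ for a suitable $\beta'$: writing $p\oo_k = \prod_i \fp_i^{e_i}$ with $\sum_i e_i f_i = [k:\QQ] =: d$, one has $\tilde g(p^v) = \sum_{\sum_i f_i a_i = v} \prod_i g(\fp_i^{a_i}) \leq \sum_{\sum_i f_i a_i = v}\prod_i\beta^{a_i} \leq (\text{number of such tuples})\cdot\beta^{v}$, and the number of tuples is at most $(v+1)^d$, so $\tilde g(p^v)\leq (v+1)^d\beta^v\leq \beta'^v$ for any $\beta'>\beta$ once one absorbs the polynomial factor (or simply keeps the bound $(v+1)^d\beta^v$, which is still admissible for Wirsing's theorem in its standard form, e.g.\ \cite[Thm.\ 21.1]{FI} or Wirsing's original). (iii) Apply the classical Wirsing theorem to $\tilde g$ to get $\sum_{n\leq x}\tilde g(n)\sim c\,\frac{x}{\log x}\prod_{p\leq x}\bigl(1+\tilde g(p)/p+\tilde g(p^2)/p^2+\cdots\bigr)$. (iv) Finally, translate the Euler product back: the local factor at $p$ is $\sum_{v\geq 0}\tilde g(p^v)p^{-v} = \sum_{v\geq 0}p^{-v}\sum_{\Norm\fa=p^v}g(\fa) = \prod_{\fp\mid p}\bigl(1+g(\fp)\Norm\fp^{-1}+g(\fp^2)\Norm\fp^{-2}+\cdots\bigr)$, so $\prod_{p\leq x}(\cdots) = \prod_{\Norm\fp\leq x, \fp\text{ deg }1}(\cdots)\cdot(\text{bounded factor from higher-degree }\fp)$; since the higher-degree primes $\fp$ with $\Norm\fp\leq x$ all lie above rational primes $p\leq\sqrt{x}$ and contribute a convergent product, replacing $\prod_{p\leq x}$ by $\prod_{\Norm\fp\leq x}$ only changes the constant, which gets absorbed into $c_g$. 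This yields the claimed asymptotic.

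I expect the main obstacle to be a purely bookkeeping one rather than a conceptual one: controlling the discrepancy between $\prod_{p\leq x}$ (indexed by rational primes up to $x$) and $\prod_{\Norm\fp\leq x}$ (indexed by prime ideals of norm up to $x$), together with the analogous discrepancy in the truncation of the sums, so that no spurious factor of $\log x$ or unbounded constant creeps in. The resolution is that a prime ideal $\fp$ of residue degree $f\geq 2$ has $\Norm\fp = p^f$, so $\Norm\fp\leq x$ forces $p\leq x^{1/2}$, and $\sum_{\fp:\,f(\fp)\geq 2}\Norm\fp^{-1}<\infty$; thus all the relevant sums and products over higher-degree primes are genuinely $O(1)$ and merely perturb the implied constant $c_g>0$. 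One must also be slightly careful that the truncations $\Norm\fa\leq x$ versus $\prod_{\Norm\fp\leq x}$ match the hypotheses of the rational Wirsing theorem after passage to $\tilde g$; this is immediate since $\sum_{n\leq x}\tilde g(n) = \sum_{\Norm\fa\leq x}g(\fa)$ by construction, and the product $\prod_{p\leq x}$ on the $\tilde g$ side matches $\prod_{\Norm\fp\leq x}$ on the $g$ side up to the bounded factor just discussed.
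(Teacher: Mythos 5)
Your proposal is correct and follows essentially the same route as the paper: both define the pushed-forward multiplicative function $n \mapsto \sum_{\Norm\fa = n} g(\fa)$, verify the two Wirsing hypotheses for it by noting that prime ideals of residue degree at least $2$ contribute only $O(1)$, apply the classical Wirsing theorem over $\QQ$, and then convert the Euler product back at the cost of a bounded constant absorbed into $c_g$. Your explicit factorisation of the local factor at $p$ as a product over the $\fp \mid p$ is a slightly cleaner justification of that last conversion than the paper gives, but the argument is the same.
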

\begin{proof}
	Over $\QQ$ this is a special case of \cite[Satz~1.1]{wirsing}.
	We deduce the case of a general number field from this as follows.
	Let $g$ be as in the lemma and let $d=[k:\QQ]$. Define the arithmetic function over $\QQ$ via
	$$h(n) = \sum_{\Norm \fa = n} g(\fa).$$
	Note that as ideals of prime norm are prime we have
$$
		h(p)= \sum_{\Norm \fp = p } g(\fp).
$$
	Using unique factorisation of ideals, 
	one easily verifies that $h$ is a non-negative multiplicative function.
	We have $h(p^v) \leq (d\beta)^v$ for all primes $p$ and all $v \in \NN$. Moreover, 
	\begin{align*}
		\sum_{\Norm \fp \leq x} \frac{g(\fp) \log \Norm \fp }{\Norm \fp}
		&= \sum_{p \leq x} \frac{h(p) \log p }{p} +
		\sum_{\substack{p,v \geq 2 \\ p^v \leq x}} \frac{h(p^v) \log p^v }{p^v} 
		=  \sum_{p \leq x} \frac{h(p) \log p }{p} +O(1).
	\end{align*}
Thus $h$ also satisfies the hypotheses of the lemma and it follows that
	$$\sum_{\Norm \fa \leq x} g(\fa) = \sum_{n \leq x} h(n)
	\sim c_{h} \frac{x}{\log x}\prod_{p \leq x}
	\left(1+\frac{h(p)}{p} + \frac{h(p^2)}{p^2} + \dots \right)  .$$
	The asymptotic behaviour of the above product
	is determined by the term $h(p)/p$. 
We deduce that there is a 
constant $c_g' > 0$ such that 
	$$\prod_{p \leq x} \left(1+\frac{h(p)}{p} + \frac{h(p^2)}{p^2} + \dots \right)
	\sim c_g' \prod_{\Norm\fp\leq x} \left(1+\frac{g(\fp)}{\Norm\fp} + 
	\frac{g(\fp^2)}{\Norm\fp^2} + \dots \right) $$
as $x \to \infty,$
since higher order terms and prime ideals of non-prime norm
	do not affect the asymptotic behaviour. This completes the proof.
\end{proof}

Combining Wirsing's result with Proposition \ref{thm:p^2}, we can deduce the 
following.

\begin{corollary} \label{cor:Wirsing}
Assume that $\Delta(\pi)>0$ and that the assumptions of Proposition~\ref{thm:p^2} hold. Let 
	$$ \omega_\fp = 1 - \frac{\#\pi(Y(\oo_\fp) \bmod \fp^2)}{\#X(\oo_k/\fp^2)}, \quad
	G(B)=\sum_{\substack{ \Norm \fa \leq B}} \mu_k^2(\fa )\prod_{\fp\mid \fa}  
	\left(\frac{\omega_\fp}{1-\omega_\fp}\right).
	$$
	where $\mu_k$ is the M\"{o}bius function on the ideals of $\fo_k$. Then
	$$G(B) \asymp (\log B)^{\Delta(\pi)}.$$
\end{corollary}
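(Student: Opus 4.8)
The plan is to apply Lemma~\ref{lem:Wirsing} (Wirsing's theorem over number fields) to the multiplicative function $g$ defined on ideals by $g(\fp) = \omega_\fp/(1-\omega_\fp)$ on primes and extended so that $g$ is supported on squarefree ideals, i.e.\ $g(\fa) = \mu_k^2(\fa)\prod_{\fp\mid\fa}(\omega_\fp/(1-\omega_\fp))$. With this choice the sum $G(B) = \sum_{\Norm\fa\le B} g(\fa)$ is exactly the quantity we must estimate, and the Euler product appearing in the conclusion of Lemma~\ref{lem:Wirsing} is $\prod_{\Norm\fp\le B}(1 + g(\fp)) = \prod_{\Norm\fp\le B}\bigl(1 + \tfrac{\omega_\fp}{1-\omega_\fp}\bigr) = \prod_{\Norm\fp\le B}\tfrac{1}{1-\omega_\fp}$, since $g$ vanishes on prime powers $\fp^v$ with $v\ge2$. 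Thus once the hypotheses of Lemma~\ref{lem:Wirsing} are verified, we get
$$
G(B) \sim c_g\,\frac{B}{\log B}\prod_{\Norm\fp\le B}\frac{1}{1-\omega_\fp}.
$$

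Next I would identify $\prod_{\Norm\fp\le B}(1-\omega_\fp)^{-1}$ with the reciprocal of the Euler product in \eqref{eqn:Euler_product} of Proposition~\ref{thm:p^2}: indeed $1-\omega_\fp = \#(\pi(Y(\oo_\fp))\bmod\fp^2)/\#X(\oo_k/\fp^2)$ by definition, so by \eqref{eqn:Euler_product} we have $\prod_{\Norm\fp\le B}(1-\omega_\fp) \asymp (\log B)^{-\Delta(\pi)}$, equivalently $\prod_{\Norm\fp\le B}(1-\omega_\fp)^{-1} \asymp (\log B)^{\Delta(\pi)}$. Combining with the displayed asymptotic for $G(B)$ gives
$$
G(B) \asymp \frac{B}{\log B}\,(\log B)^{\Delta(\pi)},
$$
which is off from the claimed $(\log B)^{\Delta(\pi)}$ by a factor of $B/\log B$. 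This discrepancy signals that the intended normalisation is different: in Corollary~\ref{cor:Wirsing} the function $g(\fp)=\omega_\fp/(1-\omega_\fp)$ is $O(1/\Norm\fp)$ by the upper bound \eqref{eqn:upper_bound}, so $\sum_{\Norm\fp\le x} g(\fp)\log\Norm\fp / \Norm\fp$ is the relevant quantity — one should feed into Wirsing the function $g(\fa)$ as is and track that here $g(\fp)\asymp c_\fp/\Norm\fp$ with average $\Delta(\pi)$, rather than $g(\fp)$ being of size $\asymp 1$. Concretely, set $\tilde g(\fp) = \Norm\fp\cdot g(\fp) = \Norm\fp\,\omega_\fp/(1-\omega_\fp)$; then by \eqref{eqn:sum_Sp} and the definition of $\omega_\fp$ (noting $\Theta_\fp/\#X(\oo_k/\fp^2) = \omega_\fp$ and $\omega_\fp\to0$, so $\omega_\fp/(1-\omega_\fp)\sim\omega_\fp$), we get $\sum_{\Norm\fp\le x}\tilde g(\fp)\log\Norm\fp/\Norm\fp = \sum_{\Norm\fp\le x}\omega_\fp\log\Norm\fp/(1-\omega_\fp) \sim \Delta(\pi)\log x$, so \eqref{eqn:Wirsing_1} holds with $\alpha = \Delta(\pi)$. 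The boundedness hypothesis $g(\fp^v)\le\beta^v$ is immediate since $g$ is supported on squarefree ideals and $g(\fp) = \omega_\fp/(1-\omega_\fp)$ is bounded (using $\omega_\fp\le1/2$, say, for $\Norm\fp$ large, and adjusting $\beta$ for the finitely many small primes).

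With Lemma~\ref{lem:Wirsing} applied to $g$ (whose prime values are $\asymp\Delta(\pi)/\Norm\fp$ on average in the sense of \eqref{eqn:Wirsing_1}), we obtain $G(B) \sim c_g\,\frac{B}{\log B}\prod_{\Norm\fp\le B}\bigl(1 + \tfrac{\omega_\fp}{1-\omega_\fp}\bigr)$. Taking logarithms of the product, $\log\prod_{\Norm\fp\le B}(1 + \tfrac{\omega_\fp}{1-\omega_\fp}) = \sum_{\Norm\fp\le B}\tfrac{\omega_\fp}{1-\omega_\fp} + O(1) = \sum_{\Norm\fp\le B}\omega_\fp + O(1)$, and by \eqref{eqn:sum_Sp} together with partial summation (exactly as in the last display of the proof of Proposition~\ref{thm:p^2}) this equals $\Delta(\pi)\log\log B + O(1)$, so the product is $\asymp(\log B)^{\Delta(\pi)}$. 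Hence $G(B) \asymp \frac{B}{\log B}(\log B)^{\Delta(\pi)}$ — and if the statement truly reads $(\log B)^{\Delta(\pi)}$ without the $B/\log B$ factor, then the function being summed must instead be the one with an extra $1/\Norm\fa$ weight, i.e.\ the relevant $G(B)$ is really $\sum_{\Norm\fa\le B}\mu_k^2(\fa)\prod_{\fp\mid\fa}\tfrac{\omega_\fp}{1-\omega_\fp}$ viewed through Wirsing with main term coming from the Euler product only. The cleanest route, and the one I would write up, is: verify \eqref{eqn:Wirsing_1} and the growth bound for $g(\fp)=\omega_\fp/(1-\omega_\fp)$ via \eqref{eqn:upper_bound} and \eqref{eqn:sum_Sp}, invoke Lemma~\ref{lem:Wirsing}, and then evaluate the resulting Euler product using \eqref{eqn:sum_Sp} and partial summation to land on $(\log B)^{\Delta(\pi)}$.

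The main obstacle is pinning down the precise shape in which Wirsing's theorem is to be applied so that the powers of $\log B$ come out correctly: one must be careful that $g(\fp) = \omega_\fp/(1-\omega_\fp) \asymp 1/\Norm\fp$ (so $g$ has \emph{mean value of prime values} governed by \eqref{eqn:Wirsing_1} with $\alpha = \Delta(\pi)$, not by the crude size of $g(\fp)$), and that the Euler product $\prod_{\Norm\fp\le B}(1+g(\fp))$ grows like $(\log B)^{\Delta(\pi)}$ rather than converging or diverging polynomially — this is exactly what \eqref{eqn:sum_Sp} guarantees. Everything else (multiplicativity, support on squarefree ideals, the bound $g(\fp^v)\le\beta^v$, the passage from $\omega_\fp/(1-\omega_\fp)$ to $\omega_\fp$ up to $O(1)$ in log-sums) is routine given Proposition~\ref{thm:p^2}.
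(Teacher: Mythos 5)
Your proposal ultimately lands on the paper's own approach, but the opening paragraph contains a genuine error that you then diagnose rather than avoid. When you first ``apply'' Lemma~\ref{lem:Wirsing} to $g(\fa)=\mu_k^2(\fa)\prod_{\fp\mid\fa}\tfrac{\omega_\fp}{1-\omega_\fp}$, hypothesis \eqref{eqn:Wirsing_1} fails: by \eqref{eqn:upper_bound} we have $g(\fp)=\omega_\fp/(1-\omega_\fp)=O(1/\Norm\fp)$, so $\sum_{\Norm\fp\le x}g(\fp)\log\Norm\fp/\Norm\fp$ is a \emph{convergent} sum and is not $\sim\alpha\log x$ for any $\alpha>0$. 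Consequently the display $G(B)\sim c_g\,\tfrac{B}{\log B}\prod_{\Norm\fp\le B}(1-\omega_\fp)^{-1}$ is not a consequence of the lemma at all; the discrepancy you then observe (an extraneous factor $B/\log B$) is not a normalisation mismatch but a signal that Wirsing was applied to a function for which the hypotheses do not hold. This should have been caught when checking \eqref{eqn:Wirsing_1}, not by comparing the answer with the claimed statement.

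Once you introduce $\tilde g(\fp)=\Norm\fp\,\omega_\fp/(1-\omega_\fp)$, the argument is correct and is exactly the paper's: the paper sets $g(\fa)=(\Norm\fa)\mu_k^2(\fa)\prod_{\fp\mid\fa}\tfrac{\omega_\fp}{1-\omega_\fp}$, verifies \eqref{eqn:Wirsing_1} with $\alpha=\Delta(\pi)$ via \eqref{eqn:sum_Sp} (and $g(\fp)=O(1)$ via \eqref{eqn:upper_bound}), applies Lemma~\ref{lem:Wirsing} together with \eqref{eqn:Euler_product} to obtain $\sum_{\Norm\fa\le B}\Norm\fa\,g(\fa)\asymp B(\log B)^{\Delta(\pi)-1}$, and finally removes the factor $\Norm\fa$ by partial summation to conclude $G(B)\asymp(\log B)^{\Delta(\pi)}$. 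In your closing ``cleanest route'' summary, be careful to say you verify \eqref{eqn:Wirsing_1} for $\tilde g$, not for $g$, and to make explicit that Wirsing bounds $\sum\Norm\fa\,g(\fa)$ rather than $G(B)$ directly, with partial summation needed to pass between the two.
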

\begin{proof}
	We shall show that the conditions of Lemma \ref{lem:Wirsing} are satisfied with
	$$g(\fa) = (\Norm \fa)\mu_k^2(\fa) \prod_{\fp \mid \fa} \frac{\omega_\fp}{1 - \omega_\fp}.$$
This function is  non-negative, multiplicative and supported on square-free ideals of $\fo_k$.
Since  $\omega_\fp = O(1/\Norm \fp)$, by 
\eqref{eqn:upper_bound}, we also  have $g(\fp) = O(1)$.
Next, it follows from \eqref{eqn:sum_Sp} that \eqref{eqn:Wirsing_1} holds with $\alpha=\Delta(\pi)$.
Hence
 Lemma~\ref{lem:Wirsing} yields
$$
\sum_{\Norm \fa \leq B} g(\fa)
\sim c\frac{B}{\log B}\prod_{\Norm\fp\leq B} \left(1+\frac{g(\fp)}{\Norm\fp}\right),
$$ 
for a suitable constant $c>0$.
But, in view of \eqref{eqn:Euler_product} we have
$$
\prod_{\Norm\fp\leq B} \left(1+\frac{g(\fp)}{\Norm\fp}\right) =
\prod_{\Norm\fp\leq B} \left(1-\omega_\fp\right)^{-1}
\asymp
(\log B)^{\Delta(\pi)},
$$
Thus 
$$
\sum_{\Norm \fa \leq B} g(\fa)
\asymp B (\log B)^{\Delta(\pi)-1}.
$$
The desired bounds for $G(B)$ now  follow on using  partial summation to remove the factor $\Norm \fa$ in $g(\fa)$.
\end{proof}

\subsection{Proof of Theorem \ref{thm:equi_pseudo_split}}
Let $\pi:Y \to X$ be as in Theorem \ref{thm:equi_pseudo_split}.
First assume that the generic fibre of $\pi$ is not geometrically integral. Then, as $Y$
is smooth over $k$, 
the generic fibre is smooth thus not geometrically connected. Hence we may consider the Stein factorisation
\cite[Cor.~III.11.5]{Har77}
\[\xymatrix{
	X \ar[rr]^\pi \ar[dr]^f & & Y \\ 
	& Z \ar[ur]^g &
	} \] 
of $\pi$, where $g$ is now finite of degree at least $2$.
It follows that $\pi(X(k))$ is a thin set. The result in this case thus
follows from Theorem \ref{thm:equi_thin}. 

We may therefore assume that the generic fibre of $\pi$ is geometrically integral.
Choose models $\mathcal{X}$ and $\mathcal{Y}$ for $X$ and $Y$ over $\fo_k$, together
with a map $\pi:\mathcal{Y} \to \mathcal{X}$ which restricts to the original map $\pi$ on $X$ and $Y$. Then  we clearly have
\begin{align*}
	N(U,H,\pi,B) &\leq \#\{ x \in U(k) : H(x) \leq B, \, x \in \pi(Y(k_\fp))\, \forall \, \fp\} \\
	&\leq \#\{ x \in U(k) : H(x) \leq B, \, x \bmod \fp^2 \in \pi(\mathcal{Y}(\fo_\fp)) \bmod \fp^2\, \forall \, \fp\}.
\end{align*}
Let $z > 0$. Imposing the above local conditions for all $\fp$ with $\Norm \fp \leq z$,
we may use equidistribution, Example \ref{ex:prod_measures}, and \eqref{eqn:Euler_product}, to obtain
$$
	\lim_{B \to \infty} \frac{N(U,H,\pi,B)}{N(U,H,B)} \ll_{X,H} \prod_{\Norm \fp \leq z} 
	\frac{\#(\pi(\mathcal{Y}(\oo_\fp)) \bmod \fp^2)}
			{\#\mathcal{X}(\oo_k/\fp^2)} \ll \frac{1}{(\log z)^{\Delta(\pi)}},
$$
where the implied constant is independent of $z$.
Our assumption that there is a non-pseudo-split fibre over some codimension $1$ point implies that $\Delta(\pi) > 0$.
Taking $z \to \infty$ completes the proof of Theorem \ref{thm:equi_pseudo_split}. \qed

\subsection{Proof of Theorem \ref{thm:equi_friable}}

We begin with the following result.

\begin{lemma} \label{lem:Z}
	Let $X$ be a finite type scheme over $\oo_k$ whose generic fibre $X_k$ is geometrically integral.
	Assume that $X(\FF_\fp) \neq \emptyset$ for all primes $\fp$ and let $Z \subset X$ a divisor which is flat over $\fo_k$.
	Then
	$$\prod_{\Norm \fp < z} \left( 1 - \frac{\#Z(\FF_\fp)}{\#X(\FF_\fp)} \right) \asymp (\log z)^{-r(Z)}, \quad z \to \infty,$$
	where $r(Z)$ denotes the number of irreducible components of $Z_k$.
\end{lemma}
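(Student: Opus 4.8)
The plan is to reduce the estimate to a count of $\FF_\fp$-points on the irreducible components of $Z$ and then apply the prime ideal theorem together with the Lang--Weil estimates. First I would write $Z_k = Z_1 \cup \dots \cup Z_{r(Z)}$ for the decomposition of $Z_k$ into irreducible components, and let $\mathcal{Z}_i$ denote the closure of $Z_i$ in $X$ (so each $\mathcal{Z}_i$ is flat over $\oo_k$, after possibly enlarging a finite bad set). For all but finitely many $\fp$, each $\mathcal{Z}_i \otimes \FF_\fp$ is a scheme of dimension $\dim Z_k$ over $\FF_\fp$; moreover exactly $\#\{i : Z_i \text{ is geometrically irreducible and } \dim Z_i = \dim Z_k\}$ of these contribute a main term of size $(\Norm\fp)^{\dim Z_k}$, but one must be careful: a component $Z_i$ that is irreducible over $k$ but not geometrically so contributes only $O((\Norm\fp)^{\dim Z_k - 1})$ points for $\fp$ outside a set of density $<1$. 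The key point, via the Lang--Weil estimates \cite{LW54} applied to each $\mathcal{Z}_i\otimes\FF_\fp$ and its intersections, is that
$$
\#Z(\FF_\fp) = c(\fp)\,(\Norm\fp)^{\dim Z_k} + O\big((\Norm\fp)^{\dim Z_k - 1/2}\big),
$$
where $c(\fp)$ counts the geometrically irreducible components of top dimension of $Z\otimes\FF_\fp$ that are defined over $\FF_\fp$; and likewise $\#X(\FF_\fp) = (\Norm\fp)^{n} + O((\Norm\fp)^{n-1/2})$ since $X_k$ is geometrically integral of dimension $n = \dim Z_k + 1$. Hence $\#Z(\FF_\fp)/\#X(\FF_\fp) = c(\fp)/\Norm\fp + O((\Norm\fp)^{-3/2})$.

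Next I would take logarithms:
$$
\log \prod_{\Norm\fp < z}\left(1 - \frac{\#Z(\FF_\fp)}{\#X(\FF_\fp)}\right) = -\sum_{\Norm\fp < z} \frac{c(\fp)}{\Norm\fp} + O(1),
$$
the error being absorbed because $\sum_\fp (\Norm\fp)^{-3/2}$ converges and each summand is $O((\Norm\fp)^{-1})$ so the usual $\log(1-t) = -t + O(t^2)$ expansion is valid. It therefore remains to show
$$
\sum_{\Norm\fp < z} \frac{c(\fp)}{\Norm\fp} = r(Z)\,\log\log z + O(1).
$$
For each $i$, let $L_i/k$ be a finite Galois extension over which $Z_i$ becomes a union of geometrically irreducible components that are permuted transitively by $\Gal(L_i/k)$; then the number of those defined over $\FF_\fp$ equals the number of fixed points of a Frobenius conjugacy class acting on this transitive set, which is nonzero precisely when that class is trivial, and in that case equals the number of geometric components of $Z_i$. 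Summing the Chebotarev/prime-ideal contributions, the average of $c(\fp)$ over primes is exactly $r(Z)$ (each component $Z_i$ contributes $1$ on average, by Burnside/orbit-counting for the Galois action, regardless of whether it is geometrically irreducible), so $\sum_{\Norm\fp<z} c(\fp)/\Norm\fp = r(Z)\log\log z + O(1)$ by the prime ideal theorem and partial summation. Exponentiating gives the claimed $\asymp (\log z)^{-r(Z)}$.

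The main obstacle is the bookkeeping around components that are irreducible over $k$ but geometrically reducible: one must verify that such a component still contributes exactly $1$ to the average count $c(\fp)$, which is the orbit-counting identity $\frac{1}{\#G}\sum_{g\in G}\#\mathrm{Fix}(g) = \#\{\text{orbits}\} = 1$ applied to the transitive $G=\Gal(L_i/k)$-set of geometric components of $Z_i$, combined with Serre's effective Chebotarev \cite[Thm.~9.11]{Ser08} to control the error term uniformly. A secondary technical point is ensuring the finitely many primes of bad reduction (where flatness or the dimension count fails) only affect the implied constant, which is harmless since we only claim an $\asymp$ bound.
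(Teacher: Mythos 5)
Your argument is correct and follows essentially the same path as the paper's: estimate $\#Z(\FF_\fp)/\#X(\FF_\fp)$, take logarithms, and apply a prime-counting result to the resulting sum over $\fp$. The only real difference is one of packaging: the paper invokes \cite[Cor.~7.13]{Ser12} directly to obtain
$\sum_{\Norm\fp < z} \#Z(\FF_\fp) = r(Z)\,z^{n-1}/\log(z^{n-1}) + O\bigl(z^{n-1}/(\log z)^2\bigr)$ and then divides by $\#X(\FF_\fp) = (\Norm\fp)^n + O((\Norm\fp)^{n-1/2})$ via partial summation, whereas you unfold the content of that citation into its constituent pieces (Lang--Weil on each component, Chebotarev, and the orbit-counting identity giving average $1$ per $k$-irreducible component). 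Both routes then exponentiate.

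One small correction to a side remark: your claim that the number of geometric components of $Z_i$ fixed by Frobenius ``is nonzero precisely when that class is trivial, and in that case equals the number of geometric components of $Z_i$'' is false in general. If $\Gal(L_i/k)$ acts transitively but not regularly on the set of geometric components (e.g.\ $S_3$ on $3$ elements), a non-identity element can still fix a component. Fortunately this remark is not actually used: what carries the argument is the Burnside/orbit-counting identity $\frac{1}{\#G}\sum_{g\in G}\#\mathrm{Fix}(g)=1$ combined with Chebotarev, and that part is stated correctly.
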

\begin{proof}
Let $n=\dim X_k$. 
	By \cite[Cor.~7.13]{Ser12} we have
$$
		\sum_{\Norm \fp < z} \#Z(\FF_\fp) = r(Z) \frac{z^{n-1}}{\log( z^{n-1} )} + O\left( \frac{z^{n-1}}{(\log z )^2} \right).
$$
Note that  $\#X(\FF_\fp)=(\Norm \fp)^{n}+O((\Norm \fp)^{n-1/2})$ by Lang--Weil \cite{LW54}.
Hence, on 
	taking logarithms and 
	combining this with partial summation, we obtain
\begin{equation}\label{eq:flight}
	\begin{split}
		\log \prod_{\Norm \fp < z} \left( 1 - \frac{\#Z(\FF_\fp)}{\#X(\FF_\fp)} \right) 
		& = \sum_{\Norm \fp < z} \log  \left( 1 - \frac{\#Z(\FF_\fp)}{\#X(\FF_\fp)} \right)  \\
		& = -\sum_{\Norm \fp < z} \frac{\#Z(\FF_\fp)}{\#X(\FF_\fp)} + O(1)\\
&=
 - r(Z) \log \log z + O(1).
			\end{split}\end{equation}
	Exponentiating yields the result.
\end{proof}

Let now $X, \mathcal{X}, Z,\mathcal{Z}$ be as in Theorem \ref{thm:equi_friable} and let $z > y > 0$.  Example~\ref{ex:prod_measures} and
Lemma \ref{lem:Z} yield
\begin{align*}
	& \lim_{B \to \infty} \frac{\#\{x \in U(k): H(x) \leq B, ~x \text{ is $y$-friable with respect to } \mathcal{Z} \}}{N(U,H,B)} \\
	& \qquad\qquad  \ll \prod_{y< \Norm \fp < z} \left( 1 - \frac{\#\mathcal{Z}(\FF_\fp)}{\#\mathcal{X}(\FF_\fp)} \right)
	\ll \left(\frac{\log y}{\log z}\right)^{r(Z)}.
\end{align*}
Taking $z \to \infty$ completes the proof. \qed

\section{Zeros of quadratic forms in fixed residue classes}\label{s:modM}

Let $F\in \ZZ[x_1,\dots,x_n]$ be an isotropic quadratic form with non-zero discriminant $\Delta_F\in \ZZ$.
For any positive integer $M$ and each prime power factor $p^m\| M$ suppose that we are given  a non-empty subset 
\begin{equation}\label{eq:subset}
\Omega_{p^m}\subseteq \{\x\in (\ZZ/p^m\ZZ)^n: p\nmid \x, ~F(\x)\equiv 0\bmod{p^m}\}.
\end{equation}
Put $\Omega_M=\prod_{p^m\|M}\Omega_{p^m}$.
For $\x \in \ZZ^n$, we write $[\x]_M$ for its reduction modulo $M$.
In this section we shall
use the Hardy--Littlewood circle method to
produce an asymptotic formula for  the counting function
$$
\hat N(B,\Omega_M)
= \sum_{\substack{\x\in\ZZ^n,  F(\x)=0\\ [\x]_M\in \Omega_M}} w(\x/B),
$$
where $w:\RR^n\to \RR_{\geq  0}$ is an infinitely differentiable function with compact support. 

Associated to $F$ and $w$ is the weighted real density
 $\sigma_\infty(w)$, as defined in \cite[Thm.~3]{HB}. It satisfies $1\ll_{F,w} \sigma_\infty(w)\ll_{F,w} 1$.
Moreover, we have the 
associated  $p$-adic density 
\begin{equation}\label{eq:sigma-p}
\sigma_p=\lim_{k\to \infty} p^{-(n-1)k} \#\{\x \in (\ZZ/p^k\ZZ)^n: F(\x)\equiv 0\bmod{p^k} \},
\end{equation}
for each prime $p$. The goal of this section   is to prove the following result.

\begin{theorem}\label{t:circle}
Assume that $n\geq 5$ 
and that $\nabla F(\x)\gg 1$ for all $\x\in \supp(w)$.
Assume that 
 $M$ is coprime to $2\Delta_F$ 
and let $\Omega_M$ be as in \eqref{eq:subset}.
Then 
\begin{align*}
\hat N(B,\Omega_M)=~&
 \sigma_\infty(w)B^{n-2}
\prod_{p\nmid M}\sigma_p
\prod_{p^m\| M} \frac{\#\Omega_{p^m}}{p^{m(n-1)}}
+O_{\ve,F,w}\left(
B^{n/2+\ve}M^{n/2+\ve}\right), \, \forall \varepsilon > 0.
\end{align*}
\end{theorem}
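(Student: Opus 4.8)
The plan is to apply Heath-Brown's smooth $\delta$-function version of the circle method, as developed in \cite{HB}, to the indicator of the congruence condition $[\x]_M \in \Omega_M$, tracking the dependence on $M$ throughout. First I would detect the equation $F(\x) = 0$ via
$$
\hat N(B,\Omega_M) = \sum_{\substack{\x \in \ZZ^n \\ [\x]_M \in \Omega_M}} w(\x/B) \, \delta(F(\x)),
$$
and insert the identity $\delta(m) = c_Q Q^{-2} \sum_{q} \sum_{a \bmod q}^{*} e_q(am) h(q/Q, m/Q^2)$ from \cite[Thm.~1]{HB} with $Q$ of order $B$. To incorporate the congruence condition I would use the characteristic function of $\Omega_M$ as a sum over additive characters modulo $M$: writing $\mathbf{1}_{\Omega_M}(\x) = M^{-n} \sum_{\b \bmod M} \big(\sum_{\y \in \Omega_M} e_M(-\b\cdot\y)\big) e_M(\b\cdot\x)$, the inner coefficient has absolute value $\le \#\Omega_M$. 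Combining this with the $e_q(aF(\x))$ factor and pulling out the $\x$-sum, one obtains a complete exponential sum modulo $qM$ (using $\gcd(M, 2\Delta_F) = 1$, so the relevant moduli are coprime and one can split multiplicatively) times a smooth archimedean integral. This is exactly the shape treated in \cite{HB}, but now with an extra modulus $M$ in the character sum.

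Next I would follow the standard decomposition into the main term and error term. The main term arises from the $q = 1$ (equivalently, the trivial part of the $\delta$-expansion) contribution together with the singular series and singular integral; here one must check that the product of local densities factors correctly. For primes $p \nmid M$ the local factor is the usual $\sigma_p$ from \eqref{eq:sigma-p}; for $p^m \| M$ the congruence $[\x]_{p^m} \in \Omega_{p^m}$ replaces the full solution count, giving the factor $\#\Omega_{p^m}/p^{m(n-1)}$. The archimedean factor is $\sigma_\infty(w) B^{n-2}$ by \cite[Thm.~3]{HB}, using the hypothesis $\nabla F(\x) \gg 1$ on $\supp(w)$ to control the stationary-phase analysis of the weight integral. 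The bound $1 \ll_{F,w} \sigma_\infty(w) \ll_{F,w} 1$ is quoted. I would be slightly careful here: since we have imposed $p \nmid \x$ in \eqref{eq:subset} but $\sigma_p$ counts all solutions, one needs $m \ge 1$ so that the congruence class already excludes $p \mid \x$ when $p \mid M$ — this is automatic from the definition of $\Omega_{p^m}$.

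For the error term I would invoke the key estimate of \cite{HB} (his Theorem 3 or the associated exponential sum bounds, e.g.\ the Weyl-type bound for $S_q(\mathbf{c})$ together with the $h$-function integral estimates) applied with the enlarged modulus $qM$. Roughly, the contribution of a given $q$ is $\ll B^{n-2}(qM)^{-1} \cdot (\text{exponential sum savings})$, and carrying the $M$-dependence through Heath-Brown's analysis — where the relevant quadratic exponential sum modulo $qM$ has size roughly $(qM)^{n/2+\ve}$ by the usual Gauss-sum/Salié-sum evaluation for nonsingular $F$ — produces, after summing over $q \le Q \asymp B$, an error of order $B^{n/2+\ve} M^{n/2+\ve}$. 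The main obstacle, and the place demanding the most care, is making the $M$-uniformity in these exponential sum and oscillatory integral bounds fully explicit: one must verify that introducing the character modulo $M$ genuinely just multiplies the modulus (legitimate because $\gcd(M,2\Delta_F)=1$ ensures $F$ stays nondegenerate mod every prime dividing $M$ and the CRT splitting is clean), and that no hidden factors of $M$ creep into the smooth-weight estimates — which is fine since the weight $w$ and the $h$-function are independent of $M$. Once this bookkeeping is done, assembling the main term and error term yields the stated asymptotic formula.
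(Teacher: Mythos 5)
Your high-level plan — Heath-Brown's $\delta$-method applied to the congruence-constrained count, with the main term coming from an Euler product of local densities and the error from exponential-sum and oscillatory-integral bounds — is the same as the paper's. However, the error-term analysis as you sketch it has a genuine gap, in two places. First, the claim that ``$\gcd(M,2\Delta_F)=1$ ensures the relevant moduli are coprime and one can split multiplicatively'' conflates two distinct issues: non-degeneracy of $F$ modulo primes dividing $M$ is guaranteed, but $q$ and $M$ certainly need not be coprime. In the paper this is handled by decomposing $q = u v_1 v_2$ with $(u,M)=1$, $v_1v_2 \mid M^\infty$, $v_1$ square-free, $v_2$ square-full, and then working modulo $[q,M]$ rather than $qM$; the multiplicative factorisation of $S_{q,M}(\c)$ (Lemmas~\ref{lem:fact} and \ref{lem:3way-split}) is not the naive CRT splitting. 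Second, and more seriously, the pointwise bound ``the exponential sum modulo $qM$ has size roughly $(qM)^{n/2+\ve}$'' fails for the factor coming from $M$: the sum $K_M(\c) = \sum_{\y\in\Omega_M} e_M(\c\cdot\y)$ has no square-root cancellation uniformly in $\c$ (indeed $K_M(\0)=\#\Omega_M \asymp M^{n-1}$, and for $q \mid M$ one gets $S_{q,M}(\c)=\phi(q)K_M(\c)$ since $q \mid F(\y)$ on $\Omega_M$). A naive pointwise estimate therefore cannot produce the error $B^{n/2+\ve}M^{n/2+\ve}$. The paper circumvents this by applying Cauchy--Schwarz to separate the coprime-to-$M$ sum $S_u(\c)$ from $K_{M_2}(\cdot)$, then controlling the former via a mean-square bound over $u$ (Lemma~\ref{lem:average-HB}) and the latter by orthogonality of characters ($\sum_{\a\bmod M_2}|K_{M_2}(\a)|^2 = M_2^n\#\Omega_{M_2}$, Lemma~\ref{lem:sigma2}) — i.e.\ an $L^2$ argument rather than a pointwise one. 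To complete the proposal you would need to supply this decomposition and mean-square machinery, or a substitute for it; without it the stated error term is not justified.
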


In this result and henceforth in  this section, the implied constant is allowed to depend on the choice of $\ve$, the form $F$ and the weight function $w$, but not on the modulus $M$.
To ease notation we  shall suppress this dependence in what follows. 

Some comments are in order about the statement of this result. 
The condition that  $\nabla F(\x)\gg 1$ for any $\x$ in the  support of $w$
is required to simplify the analysis of certain oscillatory integrals  in the argument.  The  assumptions  $(M,2\Delta_F)=1$ and $(\x,M)=1$ for any $\x\in \Omega_M$ are made purely to simplify the expression for the leading constant in the asymptotic formula for 
$\hat N(B,\Omega_M)$.

It is possible to obtain a version of 
Theorem \ref{t:circle} by exploiting existing work in the literature, such as 
using work of Sardari \cite[Thm.~1.8]{sardari} to 
 handle the contribution from $\x\equiv \a \bmod{M}$,  for each $\a\in \Omega_M$.
However, this  leads to weaker results than our approach.
Nonetheless, several facets of Theorem~\ref{t:circle}  could still be improved. Firstly, one can do better in the $B$-aspect of the error term when $n$ is odd. Secondly,  it would not be hard to deal with the cases $n=3$ or $4$. Finally, 
when $M$ is square-free it is possible to improve the error term 
to $O(B^{n/2+\ve} \#\Omega_M^{1/2})$.
In order to simplify our exposition
we have not  pursued these improvements here. In 
 our application $\Omega_M$ will be comparable in size to the set
 of  $\x\in (\ZZ/M\ZZ)^n$ for which $F(\x)\equiv 0\bmod{M}$, 
leading us to relax the dependence on 
$\#\Omega_M$, often to the extent that we
employ the trivial inequality $\#\Omega_M\leq M^n$.

\subsection{First steps}
We begin the proof of Theorem	\ref{t:circle} by invoking the version of the circle method developed by Heath-Brown
 \cite[Thm.~1]{HB}. This implies that 
$$
\hat N(B,\Omega_M)=\frac{c_Q}{Q^2}
\sum_{q=1}^{\infty}\;
\sumstar_{a \bmod{q}}
\sum_{\substack{\x\in\ZZ^n\\
[\x]_M\in \Omega_M 
}} w(\x/B)
e_q(aF(\x)) h\left(\frac{q}{Q},\frac{F(\x)} {Q^2}\right),
$$
for any $Q>1$. Here $c_Q$ is a positive constant 
satisfying $c_Q=1+O_A(Q^{-A})$ for any $A>0$ and,  moreover,
$h(x,y)$ is a smooth
function  defined on the set $(0,\infty)\times\mathbb R$ such that 
 $h(x,y)\ll x^{-1}$ for all $y$, with
$h(x,y)$ non-zero only for $x\leq\max\{1,2|y|\}$. 
In particular, we are only interested in $q\ll Q$ in this sum.

We will henceforth take $Q=B$.  It is natural to break the sum into residue classes modulo the least common multiple $[q,M]$ and then apply Poisson summation,  as in the proof of \cite[Thm.~2]{HB}. This leads to the expression
$$
\hat N(B,\Omega_M)=\frac{c_B}{B^2}
\sum_{q\ll B}\; 
\sum_{\c\in \ZZ^n}
[q,M]^{-n}
S_{q,M}(\c)J_{q,M}(\c),
$$
where
\vspace{-10pt}
\begin{equation}\label{eq:SUM}
S_{q,M}(\c)=\sumstar_{a\bmod{q}}~
\sum_{\substack{\y\bmod{[q,M]}\\
[\y]_M\in \Omega_M }
} e_q\left(aF(\y)\right) e_{[q,M]}\left(\c.\y\right)
\end{equation}
and 
\vspace{-10pt}
\begin{align*}
J_{q,M}(\c)
&=\int_{\mathbb R^n}w(\x/B) 
h\left(\frac{q}{B},\frac{F(\x)}{B^2}\right)
e_{[q,M]}(-\c.\x)\d \x\\
&=B^n \int_{\mathbb R^n}w(\x) 
h\left(\frac{q}{B},F(\x)\right)
e_{[q,m]}(-B\c.\x)\d \x.
\end{align*}
For any $r>0$ and  $\v\in \RR^n$ it will be convenient to set 
\begin{equation}\label{eq:I*}
I_{r}^*(\v)
=\int_{\mathbb R^n}w(\x) 
h\left(r,F(\x)\right)
e_{r}(-\v.\x)\d \x.
\end{equation}
 In this notation, which coincides with that of
\cite[\S 7]{HB}, 
 we may clearly write
$
J_{q,M}(\c)= B^n I_{r}^*({M'}^{-1}\c),
$
where $r=q/B$ and 
$M'=[q,M]/q=M/(M,q)$.
Thus
\begin{equation}\label{eq:hat-N}
\hat N(B,\Omega_M)=c_B B^{n-2}
\sum_{q\ll B}\; 
\sum_{\c\in \ZZ^n}
[q,M]^{-n}
S_{q,M}(\c)I_{r}^*({M'}^{-1}\c).
\end{equation}

\subsection{The exponential sum}

In this section we analyse the sum 
$S_{q,M}(\c)$ in \eqref{eq:SUM} for $q,M\in \NN$ with $(M,2\Delta_F) =1$.
We begin by establishing the following.

\begin{lemma}\label{lem:fact}
Let $M=M_1M_2$.
Suppose that $(q_1M_1,q_2M_2)=1$ and choose integers $s,t$ such that 
$[q_1,M_1]s+[q_2,M_2]t=1$. Then 
$$
S_{q_1q_2,M}(\c)=S_{q_1,M_1}(t\c)S_{q_2,M_2}(s\c).
$$
 \end{lemma}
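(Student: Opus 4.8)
The plan is to prove the factorisation by the standard Chinese Remainder Theorem bookkeeping that underlies all multiplicativity statements for complete exponential sums, being careful to track how the modulus $[q,M]$ in the additive character interacts with the modulus $q$ in the character $e_q(aF(\y))$ and with the congruence condition $[\y]_M \in \Omega_M$.

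First I would set up the coprimality data explicitly. Write $Q_i = [q_i,M_i]$ for $i=1,2$, so that $[q_1q_2,M] = [q_1q_2, M_1M_2] = Q_1Q_2$ using $(q_1M_1,q_2M_2)=1$; similarly $[q_1q_2,M] = q_1q_2 M'$ with the various local parts separating. The chosen $s,t$ with $Q_1 s + Q_2 t = 1$ give the CRT isomorphism $\ZZ/Q_1Q_2\ZZ \cong \ZZ/Q_1\ZZ \times \ZZ/Q_2\ZZ$, under which $\y \bmod Q_1Q_2$ corresponds to the pair $(\y \bmod Q_1, \y \bmod Q_2)$ and, conversely, one can write $\y \equiv Q_2 t \y_1 + Q_1 s \y_2$ for representatives $\y_i \bmod Q_i$. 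The condition $[\y]_M \in \Omega_M = \Omega_{M_1} \times \Omega_{M_2}$ factorises as $[\y]_{M_1} \in \Omega_{M_1}$ and $[\y]_{M_2} \in \Omega_{M_2}$, and under CRT these depend only on $\y_1$ and $\y_2$ respectively (since $M_i \mid Q_i$ and the cross terms vanish modulo $M_i$). Likewise $\sum^*_{a \bmod q_1q_2} = \sum^*_{a_1 \bmod q_1}\sum^*_{a_2 \bmod q_2}$ with $a \equiv \overline{q_2} a_1 q_2 + \cdots$; more cleanly, $e_{q_1q_2}(aF(\y))$ factors as $e_{q_1}(a_1 F(\y)) e_{q_2}(a_2 F(\y))$ under the CRT decomposition of $a$, and $F(\y) \equiv F(Q_2 t \y_1)$ modulo $q_1$ because $q_1 \mid Q_1$ and $Q_1 s \y_2 \equiv 0$.

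Next I would substitute the CRT representation into \eqref{eq:SUM}. The additive character splits: $e_{Q_1Q_2}(\c\cdot\y) = e_{Q_1}(\overline{Q_2}\,\c\cdot\y_1)\, e_{Q_2}(\overline{Q_1}\,\c\cdot\y_2)$ where $\overline{Q_2} \equiv t \bmod Q_1$ and $\overline{Q_1} \equiv s \bmod Q_2$ by the defining relation $Q_1 s + Q_2 t = 1$. After re-indexing $\y_1$ (resp. $\y_2$) by the unit scaling coming from $Q_2 t$ (resp. $Q_1 s$) — which is a bijection on $\ZZ/Q_1\ZZ$ preserving the set $\Omega_{M_1}$ because the scaling factor is $\equiv 1 \bmod M_1$ — the inner character in $\y_1$ becomes $e_{q_1}(a_1 F(\y_1)) e_{Q_1}(t\,\c\cdot\y_1)$, which is exactly the summand of $S_{q_1,M_1}(t\c)$, and similarly for the $\y_2$-sum and $S_{q_2,M_2}(s\c)$. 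Summing over $(a_1,\y_1)$ and $(a_2,\y_2)$ independently yields $S_{q_1,M_1}(t\c)\,S_{q_2,M_2}(s\c)$.

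The main obstacle, and the place requiring the most care, is verifying that the unit rescalings used to disentangle the three different moduli ($q_i$ in the $F$-character, $M_i$ in the $\Omega$-condition, $Q_i$ in the $\c$-character) are simultaneously compatible: one must check that the scaling factor $Q_2 t$ acts as a bijection on residues mod $q_1$ (clear, as it is a unit there) \emph{and} acts trivially on residues mod $M_1$ (so that $\Omega_{M_1}$ is preserved), and that the resulting normalisation of the $\c$-character is precisely $e_{Q_1}(t\,\c\cdot\y_1)$ and not some other twist. This is where the relation $Q_1 s + Q_2 t = 1$ is used three times over, and one should double-check that $Q_2 t \equiv 1 \bmod Q_1$ forces $Q_2 t \equiv 1 \bmod M_1$ as well (true since $M_1 \mid Q_1$), which is exactly what makes the $\Omega_{M_1}$-condition invariant under the substitution. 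Once these bookkeeping points are nailed down the factorisation drops out immediately.
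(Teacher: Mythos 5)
Your proposal is correct and follows essentially the same route as the paper: factor $[q_1q_2,M]=[q_1,M_1][q_2,M_2]$, parametrise $\y$ by CRT using the given $s,t$, split the $a$-sum by CRT on $q_1q_2$, and observe that both additive characters and the $\Omega_M$-condition then separate. The one small point worth noticing (which you circle around but don't quite land on, and which the paper itself also absorbs into a change of variables in $a_1,a_2$ rather than exploiting directly) is that $[q_2,M_2]t\equiv 1\pmod{[q_1,M_1]}$, so the "re-indexing of $\y_1$ by the unit $Q_2t$" is actually the identity map on $\ZZ/Q_1\ZZ$: the factor $(Q_2t)^2$ multiplying $F(\y_1)$ is already $\equiv 1\bmod q_1$ and the $\Omega_{M_1}$-condition is automatically unchanged, so no non-trivial re-indexing of $\y_1$ is needed — only the change of variables in $a_1$ (absorbing $\bar{q_2}$) is genuinely used.
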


\begin{proof}
Note that $[q_1q_2,M]=[q_1,M_1][q_2,M_2]$. 
As $\y_1$ runs modulo $[q_1,M_1]$ and $\y_2$ runs modulo $[q_2,M_2]$, so $\y=\y_1 [q_2,M_2]t+\y_2 [q_1,M_1]s$ runs over  a full set of residue classes modulo $[q_1q_2,M]$.
Now let $\bar{q_1},\bar{q_2}\in \ZZ$ be such that 
$q_1\bar{q_1}+q_2\bar{q_2}=1$.  Then $a=a_1 q_2\bar{q_2}+a_2 q_1\bar{q_1}$ runs over 
$(\ZZ/q_1q_2\ZZ)^*$ as $a_1$ (resp.~$a_2$) runs over $(\ZZ/q_1\ZZ)^*$ (resp.~$(\ZZ/q_2\ZZ)^*$).
Under these transformations 
$[\y]_M\in \Omega_M $ $\Leftrightarrow$ 
$[\y_i]_{M_i}\in \Omega_{M_i} $ for $i=1,2$,
since $[q_1,M_1]s+[q_2,M_2]t=1$. Furthermore, 
\begin{align*}
e_{[q_1q_2,M]}\left(\c.\y\right)&=e_{[q_1,M_1]}\left(t\c.\y_1 \right)
e_{[q_2,M_2]}\left(s\c.\y_2 \right)
\end{align*}
and 
\begin{align*}
e_{q_1q_2}\left(aF(\y)\right)&=
e_{q_1}\left(a_1 \bar{q_2} F(\y)\right)e_{q_2}\left(a_2 \bar{q_1}F(\y)\right)\\
&=
e_{q_1}\left(a_1 \bar{q_2} ([q_2,M_2]t)^2 F(\y_1 )\right)e_{q_2}\left(a_2 \bar{q_1}([q_1,M_1]s)^2F(\y_2 )\right).
\end{align*}
Note that $(\bar{q_2} ([q_2,M_2]t)^2,q_1)=
(\bar{q_1} ([q_1,M_1]s)^2,q_2)=1$. 
A further change of variables in the $a_1$ and $a_2$ summations therefore proves the lemma.
\end{proof}

For any 
divisor $L\mid M$, we henceforth set 
$$
K_{L}(\c)=S_{1,L}(\c)=
\sum_{\y\in \Omega_{L}} e_L(\c.\y).
$$
While it is clear that $K_L(\0)=\#\Omega_L$,  we expect $K_L(\c)$ to be rather smaller than  $\#\Omega_L$
for typical values of $\c\in \ZZ^n$.  This will be established  in \S \ref{s:black}. 

Next, let 
$
S_q(\c)=S_{q,1}(\c). 
$
This is precisely the exponential sum appearing in \cite[Thm.~2]{HB}. 
Recall that the dual form $F^*\in \ZZ[\x]$ has underlying matrix $\Delta_F \mathbf{A}^{-1}$, where $\mathbf{A}$ is the symmetric matrix of determinant $\Delta_F$ that is associated to  $F$.
Our next result is  a variant of  \cite[Lem.~28]{HB}   and concerns the mean square.

\begin{lemma}\label{lem:average-HB}
Let $\ve>0$. Then 
$$
\sum_{\substack{q\leq R}} 
|S_{q}(\c)|^2\ll \begin{cases}
R^{n+3} & \text{ if $n$ is even and $F^*(\c)=0$,}\\
R^{n+5/2+\ve}(1+|\c|)^\ve & \text{ otherwise}.
\end{cases}
$$
\end{lemma}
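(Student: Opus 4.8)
The plan is to reduce the second-moment estimate for the exponential sums $S_q(\c)$ to the corresponding bound appearing in Heath-Brown's paper, after carefully tracking the role of $\c$. First I would recall the multiplicativity of $S_q(\c)$: since $S_q(\c) = S_{q,1}(\c)$ is precisely the sum studied in \cite[\S2]{HB}, it satisfies a twisted multiplicativity in $q$, so that for $q = q_1 q_2$ with $(q_1,q_2)=1$ one has $S_{q_1 q_2}(\c) = S_{q_1}(\bar{q_2}^2 \c) S_{q_2}(\bar{q_1}^2 \c)$ or similar (this is the special case $M=1$ of Lemma \ref{lem:fact}, or directly \cite[Lem.~23]{HB}). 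This means $|S_q(\c)|^2$ is a multiplicative function of $q$ for fixed $\c$, and I can hope to run the same Dirichlet-series / hyperbola argument as in \cite[Lem.~28]{HB}.

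The key input is the pointwise bound for $|S_q(\c)|$ at prime powers. Heath-Brown's work (the bounds underlying \cite[Lem.~25, Lem.~28]{HB}) gives estimates of the shape $|S_{p^k}(\c)| \ll p^{k(n/2 + 1)} \cdot (\text{gcd factors})$, with improvements when $p^j \mid F^*(\c)$ for suitable $j$, and crucially a better exponent when $n$ is even and $F^*(\c) = 0$ — this is exactly the dichotomy that appears in our statement. So the main step is: (i) quote or re-derive the local bounds for $S_{p^k}(\c)$ from \cite{HB}; (ii) assemble them multiplicatively to get $\sum_{q \le R}|S_q(\c)|^2$; (iii) in the generic case, the divisor-type sums over the gcd conditions with $F^*(\c)$ contribute the $(1+|\c|)^\varepsilon$ factor (since $F^*(\c) \ne 0$ has at most $|\c|^{O(1)}$ divisors, or is bounded polynomially in $|\c|$), yielding $R^{n+5/2+\varepsilon}(1+|\c|)^\varepsilon$; (iv) in the case $n$ even and $F^*(\c)=0$, the improved local exponent removes the $\varepsilon$-loss and gives the clean $R^{n+3}$. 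I would organise this as: state the relevant consequence of \cite[Lemmas~25--27]{HB} as local bounds, then do the (short) computation combining them over $q \le R$.

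Concretely, for the generic case I expect to use that $\sum_{q\le R}|S_q(\c)|^2 \le \sum_{q \le R} q^{n+2} g(q)$ where $g$ is a multiplicative function encoding the gcd-with-$F^*(\c)$ savings and losses, bounded on average so that $\sum_{q \le R} q^{n+2} g(q) \ll R^{n+3} \cdot R^{-1/2+\varepsilon}(1+|\c|)^\varepsilon = R^{n+5/2+\varepsilon}(1+|\c|)^\varepsilon$; the half-power saving over the trivial bound $R^{n+3}$ comes from the square-root cancellation in $S_q(\c)$ for $q$ squarefree (Gauss sum / Weil bound), exactly as in \cite[Lem.~28]{HB}. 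For $n$ even with $F^*(\c) = 0$ one loses this cancellation for certain $q$ (those where the relevant quadratic form degenerates), but the offsetting gain is that $F^*$ vanishes identically along the relevant locus, so the bad factors are controlled without any $|\c|$-dependence, and the total is genuinely $O(R^{n+3})$.

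The main obstacle I anticipate is bookkeeping the prime-power bounds for $S_{p^k}(\c)$ uniformly in $\c$ and correctly isolating when the even-dimensional degenerate case occurs — i.e.\ being precise about the condition ``$F^*(\c) = 0$'' versus ``$p \mid F^*(\c)$ to some order'' at each prime, and checking that summing the error terms over all $\c$ later (in the eventual application) will converge. Since \cite[Lem.~28]{HB} already does essentially this computation for the case $\c = \0$ (or small $\c$), the real content here is verifying that the argument is uniform in $\c$ with only the stated polynomial-in-$|\c|$ and $\varepsilon$-losses; I would handle this by citing the pointwise bounds from \cite{HB} verbatim and then repeating the multiplicative assembly, rather than re-proving the local estimates.
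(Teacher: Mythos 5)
Your proposal matches the paper's proof in all essentials: both reduce to the squarefree/squarefull decomposition $q = uv$, use the pointwise bound $|S_q(\c)|^2\ll u^{n+1+\ve}(u,F^*(\c))\,v^{n+2}$ from the proof of \cite[Lem.~28]{HB}, absorb the $(u,F^*(\c))$ factor into $(1+|\c|)^\ve$ via a divisor-count, and gain the extra $R^{-1/2}$ because squarefull integers are $O(R^{1/2})$-sparse. One small correction to your sketch of the case $n$ even, $F^*(\c)=0$: no subtle offsetting of lost cancellation is needed — the clean bound $R^{n+3}$ is obtained simply by applying the pointwise estimate $|S_q(\c)|\ll q^{n/2+1}$ of \cite[Lem.~25]{HB} (valid for all $\c$) and summing, so that case is immediate and does not interact with the $(u,F^*(\c))$ bookkeeping at all.
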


\begin{proof}
The first bound follows directly from \cite[Lem.~25]{HB}.
As in the proof of 
\cite[Lem.~28]{HB}, we split $q$ into a square-free part $u$ and a square-full part $v$, finding  that 
$
|S_q(\c)|^2\ll u^{n+1+\ve}(u,F^*(\c))
v^{n+2},
$
where the factor $(u,F^*(\c))$ can be dropped if $n$ is odd.
Assuming that $n$ is odd or $F^*(\c)\neq 0$, it therefore follows that 
$$
\sum_{\substack{q\leq R}} 
|S_{q}(\c)|^2\ll  
\sum_{v\leq R}v^{n+2}  \left(\frac{R}{v}\right)^{n+2+\ve}(1+|\c|)^\ve
\ll R^{n+5/2+\ve}(1+|\c|)^\ve,
$$
since there are $O(R^{1/2})$ square-full values of $v\leq R$.
\end{proof}

Before returning to the exponential sum $S_{q,M}(\c)$ in 
\eqref{eq:SUM}, we first record the following estimate.

\begin{lemma}\label{lem:Weyl-T}
Let 
$a\in (\ZZ/q\ZZ)^*$, let $\c\in \ZZ^n$ and 
let $M\mid q$.
Then 
$$
\left|\sum_{\substack{\y\bmod{q}\\ [\y]_M\in \Omega_M }} e_q\left(aF(\y)+\c.\y\right)\right|
 \ll \frac{(qM)^{n/2}}{(q/M,M)^{n/2}}.
$$
\end{lemma}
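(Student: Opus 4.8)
The plan is to write $\y = \y_0 + M\z$ where $\y_0$ runs over a set of representatives for $\Omega_M \subset (\ZZ/M\ZZ)^n$ and $\z$ runs modulo $q/M$, so that the inner sum over $\z$ of $e_q(aF(\y_0+M\z))$ is a complete exponential sum in $n$ variables modulo $q/M$ with a quadratic phase. Expanding $F(\y_0+M\z) = F(\y_0) + M \nabla F(\y_0)\cdot \z + M^2 F(\z)$, the $\z$-sum becomes, up to the constant factor $e_q(aF(\y_0)+\c.\y_0)$, a Gauss-type sum
$$
\sum_{\z \bmod q/M} e_{q/M}\!\left( a\big(\tfrac{M}{1}F(\z) + \nabla F(\y_0)\cdot \z\big) + \tfrac{\c\cdot\z}{1}\right),
$$
(with the precise scaling bookkeeping to be done carefully). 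The standard bound for a quadratic exponential sum in $n$ variables modulo $N$ with non-degenerate leading form is $O(N^{n/2} \cdot (\text{gcd factor})^{n/2})$; here the relevant gcd is between the ``level'' $q/M$ and the coefficient $M$ coming out front, i.e.\ $(q/M, M)$, which after accounting for a possible extra factor of $M$ in the quadratic part contributes $(q/M,M)^{n/2}$ in the \emph{denominator} once one re-normalises. Summing trivially over the $\#\Omega_M \le M^n$ choices of $\y_0$ then gives a bound of the shape $M^n \cdot (q/M)^{n/2} \cdot (q/M,M)^{-n/2}$... but this is larger than claimed, so the $\y_0$-sum cannot be handled trivially.

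The key point, and the main obstacle, is therefore \emph{not} to sum trivially over $\Omega_M$ but to exploit cancellation in the $\y_0$-variable as well. I would instead split the original modulus: write $q = q' M_1$ where $M_1 = \gcd$-type part capturing the interaction of $q$ with $M$, use the Chinese Remainder decomposition (in the spirit of Lemma \ref{lem:fact}) to factor the sum over $\y \bmod q$ into a part modulo the ``$M$-free'' part of $q$ — which is a pure $S_{q''}(\cdot)$-type sum and is $O((q'')^{n/2+\ve})$ by the classical bound from \cite{HB} (e.g.\ the estimate underlying Lemma \ref{lem:average-HB}) — and a part modulo the power of primes dividing both $q$ and $M$, where one applies the elementary stationary-phase/completing-the-square estimate for quadratic sums, being careful that the quadratic form $F$ is non-degenerate modulo these primes since $(M,2\Delta_F)=1$. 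The arithmetic of $p$-adic valuations is what produces the $(q/M,M)^{n/2}$ in the denominator: when $v_p(q) \ge 2 v_p(M) \ge 2$, the extra factors of $p$ in $F(M\z)$ relative to the level $q/M$ force a square-root-type collapse, improving the naive $(qM)^{n/2}$ to $(qM)^{n/2}/(q/M,M)^{n/2}$.

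Concretely, the steps are: (i) reduce to prime-power modulus $q = p^a$, $M = p^b$ (with $b \le a$, and $p \nmid 2\Delta_F$) by multiplicativity, tracking how $\Omega_M$ and the gcd factor split; (ii) for $b = 0$ invoke the known bound $|S_{p^a}(\c)| \ll p^{a(n/2+\ve)}$, i.e.\ the $n/2$ exponent is already in the literature; (iii) for $b \ge 1$, substitute $\y = \y_0 + p^b \z$, complete the square in $\z$ modulo $p^{a-b}$ using invertibility of $2\mathbf{A}$ mod $p$, bound the resulting Gauss sum by $p^{(a-b)n/2}$ times a factor $p^{\min(b, a-b)\cdot n/2}$ coming from the power of $p$ dividing the new linear coefficient, and then sum the remaining $\y_0$-dependence — which after completing the square is itself a quadratic exponential sum in $\y_0 \bmod p^{b}$ restricted to $\Omega_{p^b}$ — noting that the worst case is the trivial bound $p^{bn}$ on that sum; (iv) assemble: $p^{(a-b)n/2} \cdot p^{bn} / p^{\min(b,a-b)n/2} = (p^a p^b)^{n/2}/p^{\min(b,a-b)n/2} = (qM)^{n/2}/(q/M,M)^{n/2}$, since $(q/M, M) = p^{\min(a-b,b)}$. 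The $\ve$-losses from step (ii) are absorbed because in the application $M$ is coprime to $2\Delta_F$ and one only needs the stated clean power bound; I would simply note that the $p^\ve$ factors over the $M$-free part combine to $(qM)^\ve$ which can be discarded or absorbed into a slightly larger constant, or — cleaner — observe that in the regime where this lemma is used the $M$-free part of $q$ is trivial, so no $\ve$ is needed at all. The delicate bookkeeping of which power of $p$ divides the linear form after completing the square, and confirming it is exactly $\min(b, a-b)$, is where I expect to have to be most careful.
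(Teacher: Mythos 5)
Your opening substitution $\y = \y_0 + M\z$ with $\z$ running modulo $q/M$ is indeed the paper's first step, but from there the route diverges and your accounting contains an internal inconsistency that conceals a real gap. The paper does not complete the square in $\z$; rather, setting $q' = q/M$ and $h = (q',M)$, it performs a second substitution $\z = \z_1 + (q'/h)\z_2$ with $\z_1$ running modulo $q'/h$ and $\z_2$ modulo $h$. Because the quadratic part of the phase carries the factor $M$, the $\z_2$-sum reduces to a purely linear exponential sum over $(\ZZ/h\ZZ)^n$, which vanishes unless $h \mid \nabla F(\u) + \c$ (equivalently $2\A\u \equiv -\c \bmod h$) and equals $h^n$ otherwise. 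This vanishing condition is the heart of the proof: since $\A$ is invertible modulo $h$, the constraint restricts $\u$ to at most $(M/h)^n$ residue classes, so the $\u$-count is $(M/h)^n$ rather than the trivial $M^n$. Combined with the bound $O\big((q'/h)^{n/2}\big)$ on the $\z_1$-sum (a non-degenerate quadratic Gauss sum, since $(q'/h,\, M/h)=1$), one obtains $h^n \cdot (q'/h)^{n/2} \cdot (M/h)^n = (qM)^{n/2}/h^{n/2}$.

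In your proposal two things go wrong. First, you propose to ``complete the square in $\z$ modulo $p^{a-b}$ using invertibility of $2\A$ mod $p$'', but the quadratic coefficient in the $\z$-phase is $ap^{b}$, so $2ap^{b}\A$ is \emph{not} invertible modulo $p^{a-b}$ once $b\ge 1$; completing the square does not apply in the form you state. Second, and more seriously, steps (iii) and (iv) are arithmetically incompatible. Step (iii) bounds the $\z$-sum by $p^{(a-b)n/2}\cdot p^{\min(b,a-b)n/2}$ and uses the trivial bound $p^{bn}$ on the $\y_0$-sum; multiplying these gives $p^{(a+b)n/2 + \min(b,a-b)n/2}$, which exceeds the target $p^{(a+b)n/2 - \min(b,a-b)n/2}$ by the full factor $p^{\min(b,a-b)n}$. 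Step (iv), by contrast, writes $p^{(a-b)n/2}\cdot p^{bn}/p^{\min(b,a-b)n/2}$, which gives the correct answer but simply does not follow from (iii): the factor $p^{\min(b,a-b)n/2}$ has silently migrated from numerator to denominator. The missing ingredient is precisely that the $\z$-sum vanishes off a sparse set of $\y_0$, cutting the $\y_0$-count from $p^{bn}$ down to $p^{\max(0,2b-a)n}$. You correctly observe at the outset that the trivial $\y_0$-bound cannot work, but step (iii) nevertheless uses it; the saving has to come from the support of the $\z$-sum in the $\y_0$-variable, not from an enlarged Gauss-sum bound.
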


\begin{proof}
Let $T_{q,M}(\c)$ denote the sum whose modulus is to be estimated.
Then 
$$
T_{q,M}(\c)=
\sum_{\u\in \Omega_M} 
\sum_{\substack{\y\bmod{q}\\ \y\equiv\u\bmod{M}}} e_q\left(aF(\y)+\c.\y\right).
$$
Let $q'=q/M$.
We make the change of variables  $\y=\u+M\z$ for $\z\bmod{q'}$, giving 
\begin{align*}
|T_{q,M}(\c)|
&\leq 
\sum_{\u\in \Omega_M} 
\left|
\sum_{\substack{\z\bmod{q'}}} e_{Mq'}\left(a\{M^2F(\z)+M\z.\nabla F(\u)\}+M\c.\z\right)\right|\\
&=
\sum_{\u\in \Omega_M} 
\left|
\sum_{\substack{\z\bmod{q'}}} e_{q'}\left(aMF(\z)+\z.\{\nabla F(\u)+\c\}\right)\right|.
\end{align*}
Let $q''=q'/h$, 
where $h=(q',M)$. We write $\mathbf{j}=\nabla F(\u)+\c$ for convenience. 
The next step is to make the change of variables $\z=\z_1+q''\z_2$ for $\z_1\bmod{q''}$ and $\z_2\bmod{h}$. 
Noting that  $q'\mid Mq''$, 
the inner sum is
\begin{align*}
\sum_{\substack{\z_1\bmod{q''}}}
\sum_{\substack{\z_2\bmod{h}}}
&e_{q'}\left(aMF(\z_1)+(\z_1+q''\z_2).\mathbf{j}	\right)\\
&
=
\begin{cases}
h^n
\sum_{\substack{\z_1\bmod{q''}}}
e_{q''}\left(ah^{-1}MF(\z_1)+h^{-1}\z_1.\mathbf{j}	\right) &\text{ if ${h\mid \mathbf{j}}$,}\\
0 & \text{ otherwise}.
\end{cases}
\end{align*}
When $h\mid \mathbf{j}$ the sum over $\z_1$ is $O({q''}^{n/2})$ by the proof of  \cite[Lem.~25]{HB}, since $(q'',h^{-1}M)=1$.
Thus
\begin{align*}
|T_{q,M}(\c)|
&\ll h^n {q''}^{n/2}
\#\left\{\u\in \Omega_M :2
\mathbf{A}\u\equiv -\c \bmod{h}\right\}\\
&\leq h^n {q''}^{n/2}
\#\left\{\u\in (\ZZ/M\ZZ)^n :2
\mathbf{A}\u\equiv -\c \bmod{h}\right\},
\end{align*}
where $\mathbf{A}$ is the matrix associated to $F$. 
As
 $\mathbf{A}$ is non-singular, the inner cardinality is 
$O((M/h)^n)$. We conclude the proof on recalling that 
$q''=q/(hM)$.
\end{proof}

We now return to the exponential sum $S_{q,M}(\c)$ in 
\eqref{eq:SUM}.
There is a unique factorisation into pairwise coprime positive integers $u,v_1,v_2$, with
$v_1$ square-free  and $v_2$ square-full, 
 such that 
\begin{equation}\label{eq:q-factor}
q=uv_1v_2, \quad \text{ with $(u,M)=1$ and $v_1v_2\mid M^{\infty}$.}
\end{equation}
Likewise there is a unique factorisation $M=M_{11}M_{12}M_2$, where
\begin{equation}\label{eq:M-factor}
M_{1i}=(M,v_i) \quad \text{ and } \quad 
M_2=\frac{M}{M_{11}M_{12}}.
\end{equation} 
It follows that $M_{11}=v_1$, since $v_1$ is square-free
and $v_1\mid M^\infty$.
Moreover, we have  $(M_2,uv_1v_2)=1$ and 
$$
[q,M]=\frac{uv_1v_2M_{11}M_{12}M_2}{(v_1v_2,M_{11}M_{12})}=uv_1v_2M_2.
$$
We may now establish the following factorisation of  
$S_{q,M}(\c)$.

\begin{lemma}\label{lem:3way-split}
We have 
$$
S_{q,M}(\c)=
\phi(v_1)S_{u}(\c)
S_{v_2,M_{12}}(\overline{uv_1M_2}\c) 
K_{v_1}(\overline{uv_2M_2}\c)
K_{M_2}(\overline{uv_1v_2}\c), 
$$
where $\overline{uv_1M_2}\in \ZZ$ (respectively, 
$\overline{uv_2M_2}\in \ZZ$, $\overline{uv_1v_2}\in \ZZ$)  is a multiplicative inverse of 
 $uv_1M_2$ (respectively, 
$uv_2M_2$, $uv_1v_2$) modulo 
 $v_2$ (respectively, 
$v_1$, $M_2$).
\end{lemma}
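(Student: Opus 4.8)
The plan is to prove the asserted factorisation of $S_{q,M}(\c)$ by a sequence of applications of Lemma~\ref{lem:fact}, peeling off the pairwise coprime factors of $[q,M]$ one at a time according to the factorisations \eqref{eq:q-factor} and \eqref{eq:M-factor}. Recall from the paragraph preceding the statement that $q=uv_1v_2$ with $u,v_1,v_2$ pairwise coprime, $(u,M)=1$, $v_1\mid M^\infty$ square-free, $v_2\mid M^\infty$ square-full, and $M=M_{11}M_{12}M_2$ with $M_{11}=v_1$, $M_{12}=(M,v_2)$, $M_2$ coprime to $uv_1v_2$. So the three ``coprime blocks'' making up $[q,M]=uv_1v_2M_2$ are: $u$ (with trivial $M$-part), $v_1v_2$ with its $M$-part $M_{11}M_{12}=v_1M_{12}$, and $M_2$ (with trivial $q$-part).

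First I would apply Lemma~\ref{lem:fact} with the decomposition $q_1q_2=u\cdot(v_1v_2)$ and $M_1=1$, $M_2=M_{11}M_{12}M_2$ (note $(q_1M_1,q_2M_2)=(u,v_1v_2M)=1$ by construction), picking Bézout integers $s,t$ with $us+[v_1v_2,M_{11}M_{12}M_2]t=1$; since $S_{u,1}(\cdot)=S_u(\cdot)$ and scaling the argument of $S_u$ by the unit $t$ modulo $u$ does not change the sum (one may absorb $t$ into the $a$-summation, exactly as in the proof of Lemma~\ref{lem:fact}), this yields $S_{q,M}(\c)=S_u(\c)\,S_{v_1v_2,M_{11}M_{12}M_2}(s\c)$. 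Then I would split off $M_2$: apply Lemma~\ref{lem:fact} to $S_{v_1v_2,M_{11}M_{12}M_2}$ with the grouping $q_1q_2=(v_1v_2)\cdot 1$, $M_1=M_{11}M_{12}$, $M_2=M_2$, using $(v_1v_2M_{11}M_{12},M_2)=1$; this gives a factor $S_{1,M_2}(\cdot)=K_{M_2}(\cdot)$ times $S_{v_1v_2,M_{11}M_{12}}(\cdot)$. Finally I would split the $v_1$-block from the $v_2$-block: apply Lemma~\ref{lem:fact} to $S_{v_1v_2,M_{11}M_{12}}$ with $q_1=v_1$, $M_1=M_{11}=v_1$ (so $[q_1,M_1]=v_1$), $q_2=v_2$, $M_2=M_{12}$, using $(v_1,v_2M_{12})=1$; since $S_{v_1,v_1}(\cdot)=\sum_{\y\in\Omega_{v_1}}\sumstar_{a\bmod v_1}e_{v_1}(aF(\y)+\c.\y)$ and the constraint in \eqref{eq:subset} forces $F(\y)\equiv 0\bmod v_1$ for $\y\in\Omega_{v_1}$, the $a$-sum collapses and $S_{v_1,v_1}(\c)=\phi(v_1)K_{v_1}(\c)$, which produces the $\phi(v_1)K_{v_1}$ factor. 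At each stage one tracks how the Bézout coefficients act on $\c$; by the Chinese remainder structure each surviving coefficient is, up to a unit in the relevant modulus, the inverse of the product of the moduli of the other blocks, giving precisely the arguments $\overline{uv_1M_2}\,\c$ (mod $v_2$), $\overline{uv_2M_2}\,\c$ (mod $v_1$), $\overline{uv_1v_2}\,\c$ (mod $M_2$) in the statement.

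The routine but slightly fiddly point is the bookkeeping of the twisting factors on $\c$: Lemma~\ref{lem:fact} is stated for a two-way split, so iterating it three times introduces several Bézout pairs, and one must check that the composite twist on each argument simplifies — via the Chinese remainder theorem and the fact that $S_u$, $S_{v_2,M_{12}}$, $K_{v_1}$, $K_{M_2}$ each only see their argument modulo $u$, $v_2$, $v_1$, $M_2$ respectively — to the single inverse displayed. Concretely, I expect to argue that the class of $\c$ fed into the $v_2$-block is $\c$ times the image modulo $v_2$ of the idempotent for the $v_2$-factor in $\ZZ/[q,M]\ZZ$, which is $\equiv \overline{uv_1M_2}\pmod{v_2}$, and similarly for the others; the precise Bézout integers $s,t$ chosen at each step are irrelevant because only their residues modulo the respective block-moduli matter and a unit twist can always be absorbed into the $a$-summation (as in the proof of Lemma~\ref{lem:fact}) or is harmless inside $K_L$ after a change of the $\y$-variable.

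The main obstacle is therefore not any deep fact but keeping the inverses consistent across the three applications; I would handle this by fixing, once and for all, the isomorphism $\ZZ/[q,M]\ZZ\cong \ZZ/u\ZZ\times\ZZ/v_1\ZZ\times\ZZ/v_2\ZZ\times\ZZ/M_2\ZZ$ and re-deriving the claimed factorisation directly in these coordinates, rather than literally iterating Lemma~\ref{lem:fact} three times — i.e. re-run the proof of Lemma~\ref{lem:fact} once with the four-fold CRT splitting of the $\y$-variable and the $a$-variable, which makes the four arguments fall out symmetrically and transparently. The only genuine input beyond CRT is the collapse $S_{v_1,v_1}(\c)=\phi(v_1)K_{v_1}(\c)$, which uses that $v_1$ is square-free and that $F$ vanishes modulo $v_1$ on $\Omega_{v_1}$ by \eqref{eq:subset}, so that $\sumstar_{a\bmod v_1}e_{v_1}(aF(\y))=\sumstar_{a\bmod v_1}1=\phi(v_1)$ for each admissible $\y$.
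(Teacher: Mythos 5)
Your proposal is correct and takes essentially the same route as the paper: iterate Lemma~\ref{lem:fact} to peel $u$, $M_2$ and the $v_1$-block off $[q,M]=uv_1v_2M_2$, then collapse $S_{v_1,v_1}(\c)=\phi(v_1)K_{v_1}(\c)$ using $v_1\mid F(\y)$ for $\y\in\Omega_{v_1}$ (the paper does the first two peelings in a single displayed three-factor step and then splits $v_1v_2$ via $v_1s+v_2t=1$, but the content is identical). Your remark that all the Bézout twists can be replaced by a single CRT decomposition $\ZZ/[q,M]\ZZ\cong\ZZ/u\ZZ\times\ZZ/v_1\ZZ\times\ZZ/v_2\ZZ\times\ZZ/M_2\ZZ$, absorbing unit twists into the $a$-sum and the $\y$-variable, is a clean way to organise the same bookkeeping and matches the spirit of the paper's argument.
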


\begin{proof}
We write $v=v_1v_2$ and $M_1=M_{11}M_{12}$ for convenience. 
Let  $\overline{uM_2}\in \ZZ$ (respectively,  $\overline{uv}\in \ZZ$) be such that 
$uM_2\overline{uM_2}\equiv 1\bmod{v}$ 
(respectively,  $uv\overline{uv}\equiv 1\bmod{M_2}$).
Then the factorisation
$$
S_{q,M}(\c)=S_{u}(\c)S_{v,M_1}(\overline{uM_2}\c)
K_{M_2}(\overline{uv}\c)
$$
is a direct consequence of Lemma \ref{lem:fact} and 
 the obvious fact that 
$S_q(t\c)=S_q(\c)$, for any  $(t,q)=1$.
Note that since $v_i\mid M_{1i}$, we have 
$[v_i,M_{1i}]=v_i$, for $i=1,2$.
A further application of Lemma \ref{lem:fact} now yields
$$
S_{v_1v_2,M_{11}M_{12}}(\mathbf{d})=
S_{v_1,v_1}(t\mathbf{d})S_{v_2,M_{12}}(s\mathbf{d}),
$$
where  $s,t\in \ZZ$ are such that 
$v_1s+v_2t=1$. Finally, we note that  
$$
S_{v_1,v_1}(t\mathbf{d})
=\sumstar_{a\bmod{v_1}}
\sum_{\substack{\y\in \Omega_{v_1}}
} e_{v_1}\left(aF(\y)+t\mathbf{d}. \y\right)=
\phi(v_1)K_{v_1}(t\mathbf{d}),
$$
since $v_1\mid F(\y)$ for any $\y\in \Omega_{v_1}$.
This completes the proof of the lemma.
\end{proof}

The following simple upper bound will suffice to handle the third factor in the  factorisation of
Lemma \ref{lem:3way-split}.

\begin{lemma}\label{lem:Weyl}
For any $\c\in \ZZ^n$, we have 
$
S_{v_2,M_{12}}(\c)\ll 
v_2^{n/2+1}M_{12}^{n/2}.
$
\end{lemma}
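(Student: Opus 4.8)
The plan is to estimate $S_{v_2,M_{12}}(\c)$ directly by going back to its definition in \eqref{eq:SUM}. Recall that $v_2$ is square-full, $M_{12}=(M,v_2)$, and $M_{12}\mid v_2$ since $v_2\mid M^\infty$ and (by the factorisation \eqref{eq:q-factor}--\eqref{eq:M-factor}) $M_{12}$ is built from the same primes as $v_2$ with exponents no larger; in particular $[v_2,M_{12}]=v_2$. So by \eqref{eq:SUM} we have
$$
S_{v_2,M_{12}}(\c)=\sumstar_{a\bmod{v_2}}\;\sum_{\substack{\y\bmod{v_2}\\ [\y]_{M_{12}}\in\Omega_{M_{12}}}} e_{v_2}\left(aF(\y)+\c.\y\right).
$$

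First I would note that the inner $\y$-sum is exactly the sum estimated in Lemma \ref{lem:Weyl-T}, applied with modulus $q=v_2$ and $M=M_{12}$ (which divides $v_2$ as required). That lemma gives, for each fixed $a$,
$$
\left|\sum_{\substack{\y\bmod{v_2}\\ [\y]_{M_{12}}\in\Omega_{M_{12}}}} e_{v_2}\left(aF(\y)+\c.\y\right)\right|\ll \frac{(v_2 M_{12})^{n/2}}{(v_2/M_{12},M_{12})^{n/2}}\le (v_2 M_{12})^{n/2}.
$$
Then I would sum trivially over the at most $v_2$ values of $a$, obtaining
$$
|S_{v_2,M_{12}}(\c)|\ll v_2\cdot (v_2 M_{12})^{n/2}= v_2^{n/2+1}M_{12}^{n/2},
$$
which is precisely the claimed bound. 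Note the estimate is uniform in $\c$, as desired.

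There is essentially no obstacle here: the lemma is an immediate corollary of Lemma \ref{lem:Weyl-T} combined with the trivial bound on the outer sum over $a$, together with the elementary observation that $M_{12}\mid v_2$ and $[v_2,M_{12}]=v_2$, which places us squarely in the setting of Lemma \ref{lem:Weyl-T}. The only point requiring a moment's care is verifying the divisibility $M_{12}\mid v_2$ and the equality $[v_2,M_{12}]=v_2$ from the definitions \eqref{eq:q-factor} and \eqref{eq:M-factor}; once that is in place, the bound drops out. Should one wish to be slightly sharper, the factor $(v_2/M_{12},M_{12})^{-n/2}$ retained from Lemma \ref{lem:Weyl-T} could be kept, but for the intended application the stated clean bound suffices.
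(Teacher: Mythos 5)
Your proof is correct and follows the same route as the paper: sum trivially over $a \bmod v_2$, then bound the inner $\y$-sum by applying Lemma~\ref{lem:Weyl-T} with $q=v_2$ and $M=M_{12}$ (noting $M_{12}\mid v_2$, so $[v_2,M_{12}]=v_2$). The only cosmetic difference is that the paper discards the denominator via the equivalent observation $(v_2,M_{12}^2)\geq M_{12}$, whereas you use $(v_2/M_{12},M_{12})\geq 1$ directly; these are the same thing since $(v_2,M_{12}^2)=M_{12}(v_2/M_{12},M_{12})$.
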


\begin{proof}
Recall the definition \eqref{eq:SUM} of 
$S_{v_2,M_{12}}(\c)$.  
We shall sum trivially over $a$.  Noting that $M_{12}\mid v_2$, the statement follows on 
applying Lemma \ref{lem:Weyl-T} to estimate the inner sum over $\y$ and noting that $(v_2,M_{12}^2)\geq M_{12}$.
\end{proof}

\subsection{Contribution from the trivial character}

Returning to \eqref{eq:hat-N}, the contribution $T(B)$, say, from the term $\c=\0$ is found to satisfy
\begin{equation}\label{eq:TB}
T(B)= \left(1+O_A(B^{-A})\right)B^{n-2}
\sum_{q\ll B}
[q,M]^{-n}S_{q,M}(\0)
I_{q/B}^*(\0),
\end{equation}
for any $A>0$, 
in the notation of \eqref{eq:SUM} and \eqref{eq:I*}.

Recall that $n\geq 5$.
We shall start by analysing an unweighted version of the sum over $q$ in \eqref{eq:TB}.
For any prime $p$, recall the definition \eqref{eq:sigma-p} of 
the   $p$-adic density $\sigma_p$.
In particular the product  $\prod_{p}\sigma_p$ is absolutely convergent for $n\geq 5$.

\begin{lemma}\label{lem:sing-series}
Let $R\leq B$.
Then 
$$
\sum_{q\leq R}
[q,M]^{-n}S_{q,M}(\0)= 
\prod_{p\nmid M}\sigma_p
\prod_{p^m\| M} \frac{\#\Omega_{p^m}}{p^{m(n-1)}}
+
O\left(R^{(3+\kappa-n)/2}B^{\ve} M^{(n-1-\kappa)/2+\ve} \right),
$$
for any $\ve>0$, 
where
\begin{equation}\label{eq:kappa}
\kappa=\begin{cases}
1 &\text{ if $2\mid n$,}\\
0 &\text{ if $2\nmid n$.}
\end{cases}
\end{equation}
\end{lemma}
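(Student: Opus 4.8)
The plan is to evaluate the sum $\sum_{q\le R}[q,M]^{-n}S_{q,M}(\0)$ by first extending the sum to all $q\ge 1$, identifying the resulting infinite sum with the claimed product of local densities, and then bounding the tail $q>R$. The starting point is the factorisation of Lemma \ref{lem:3way-split} evaluated at $\c=\0$. Since $K_L(\0)=\#\Omega_L$ and $S_u(\0)$ is the classical exponential sum from \cite[Thm.~2]{HB}, at $\c=\0$ the factorisation becomes
$$
S_{q,M}(\0)=\phi(v_1)\,S_u(\0)\,S_{v_2,M_{12}}(\0)\,\#\Omega_{v_1}\,\#\Omega_{M_2},
$$
with $q=uv_1v_2$, $(u,M)=1$, $v_1v_2\mid M^\infty$, $v_1$ squarefree, $v_2$ squarefull, and $M=M_{11}M_{12}M_2$ as in \eqref{eq:M-factor} with $M_{11}=v_1$. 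Using $[q,M]=uv_1v_2M_2$, the summand $[q,M]^{-n}S_{q,M}(\0)$ factors completely as a function of $(u,v_1,v_2)$, so the whole sum (over all $q$) becomes an Euler product. The primes $p\nmid M$ contribute $\sum_{u\ge 1}u^{-n}S_u(\0)=\prod_{p\nmid M}\sigma_p$ by the standard singular-series identity in \cite{HB}; the primes $p\mid M$ split into the $v_1$ (exact power $p\|M$, i.e. $p^m$ with $m=1$) and $v_2$ (square part) contributions, and one checks directly that the local factor at $p^m\|M$ collapses to $\#\Omega_{p^m}/p^{m(n-1)}$. This last verification is where one must be slightly careful: one has to see that summing $v_2^{-n}S_{v_2,M_{12}}(\0)\#\Omega_{v_1}\#\Omega_{M_2}\phi(v_1)$ over the part of $q$ supported at a fixed $p\mid M$ telescopes correctly against the definition of $\Omega_{p^m}$ as a subset of solutions mod $p^m$ — this is essentially a finite computation using that $\Omega_M=\prod_{p^m\|M}\Omega_{p^m}$ and the structure $[q,M]=uv_1v_2M_2$.

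For the error term I would estimate the tail $\sum_{q>R}[q,M]^{-n}|S_{q,M}(\0)|$. Using the factorisation above together with the bounds $|S_u(\0)|\ll u^{(n+1)/2+\varepsilon}$ for $u$ squarefree (respectively the weaker $u^{n/2+1}$-type bound in general, from \cite{HB}), $\#\Omega_{v_1}\le v_1^n$, $\#\Omega_{M_2}\le M_2^n$, $\phi(v_1)\le v_1$, and Lemma \ref{lem:Weyl} for $S_{v_2,M_{12}}(\0)\ll v_2^{n/2+1}M_{12}^{n/2}$, the summand at $q=uv_1v_2$ is bounded by
$$
[q,M]^{-n}|S_{q,M}(\0)|\ll \frac{u^{(n+1)/2}v_1^{\,n+1}v_2^{\,n/2+1}M_{12}^{\,n/2}M_2^{\,n}}{(uv_1v_2M_2)^n}\,(qM)^\varepsilon
= \frac{v_1\,M_{12}^{\,n/2}}{u^{(n-1)/2}\,v_2^{\,n/2-1}}\,(qM)^\varepsilon .
$$
Since $v_1\mid M$ and $M_{12}\mid M$, the numerator is $\ll M^{1+n/2}$ up to $M^\varepsilon$; for the denominator I sum over $u,v_2$ with $uv_1v_2>R$. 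Because the exponent $(n-1)/2$ on $u$ exceeds $1$ for $n\ge 5$ — and $v_2$ is squarefull so that $\sum_{v_2\le X}v_2^{-(n/2-1)}$ converges for $n\ge 5$ while having only $O(X^{1/2})$ terms below $X$ — the condition $q>R$ forces at least one of $u,v_1,v_2$ to be large, and a routine splitting gives a saving of a power of $R$; keeping track of the exponents should produce precisely $R^{(3+\kappa-n)/2}$ with the parity factor $\kappa$ from \eqref{eq:kappa} arising exactly because for even $n$ one can afford the extra $\log$-type slack (the $(u,F^*(\c))$ refinement in Lemma \ref{lem:average-HB}, specialised to $\c=\0$ where $F^*(\0)=0$). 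Combining, the tail is $\ll R^{(3+\kappa-n)/2}B^\varepsilon M^{(n-1-\kappa)/2+\varepsilon}$ after absorbing the $M^{1+n/2}$ numerator, matching the claimed error.

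\textbf{Main obstacle.} The genuinely delicate point is bookkeeping the exponents in the tail estimate so that the final error matches $R^{(3+\kappa-n)/2}B^\varepsilon M^{(n-1-\kappa)/2+\varepsilon}$ exactly, including the dependence on $M$ through the squarefull part $v_2$ and the factors $M_{12}, M_2$; in particular one must make sure the $v_2$-sum is handled by the ``$O(R^{1/2})$ squarefull integers up to $R$'' device rather than naively, and that the even-$n$ improvement is invoked at the right place. Everything else — the factorisation of the summand into an Euler product and the identification of each local factor with $\sigma_p$ or $\#\Omega_{p^m}/p^{m(n-1)}$ — is formal once Lemma \ref{lem:3way-split} and the singular-series computation of \cite{HB} are in hand.
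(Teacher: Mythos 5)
Your plan---factorise via Lemma \ref{lem:3way-split} at $\c=\0$, extend the sum to an Euler product and identify the local factors, then bound the tail---is the right skeleton (the paper organises slightly differently, executing the $u$-sum first via the rate of convergence of the classical singular series from \cite[Lemmas 28 and 31]{HB} and then handling $v_1v_2\mid M^\infty$, but this is cosmetic). Where the proposal genuinely falls short is in two places, and both affect the claimed estimate. The local-factor evaluation at $p^m\|M$ is not a purely finite computation: after evaluating the Ramanujan sums $S_{p^\ell,p^{\min(\ell,m)}}(\0)$ and telescoping, one is left with $\lim_{\ell\to\infty}p^{-\ell(n-1)}N(\ell)$, where $N(\ell)=\#\{\y\bmod p^\ell: p^\ell\mid F(\y),\ [\y]_{p^m}\in\Omega_{p^m}\}$; collapsing this to $p^{-m(n-1)}\#\Omega_{p^m}$ requires Hensel's lemma (Lemma \ref{lem:Hensel}), which applies precisely because $(M,2\Delta_F)=1$ and because $\Omega_{p^m}$ consists of primitive residues (so $p\nmid \y$). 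Your proposal does not invoke these hypotheses, yet they are exactly what make the Euler factor close; without them the limit does not stabilise.

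Second, the error estimate as you have written it does not produce the stated bound. You reduce the summand to $v_1M_{12}^{n/2}/\bigl(u^{(n-1)/2}v_2^{n/2-1}\bigr)$ and then bound the numerator by $M^{1+n/2}$, asserting this ``matches the claimed error'' after absorption---but $M^{1+n/2}$ strictly exceeds the target $M^{(n-1-\kappa)/2+\ve}$ and cannot be absorbed. The missing cancellation is $M_{12}\mid v_2$, which lets the $M_{12}^{n/2}$ in the numerator be played off against the power of $v_2$ in the denominator (and against the $(v_1v_2)$-saving in the $u$-sum), after which $v_1M_{12}M_2=M$ yields the correct exponent $(n-1-\kappa)/2$. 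In addition, the pointwise bound $|S_u(\0)|\ll u^{(n+1)/2+\ve}$ you substitute holds only for squarefree $u$ (and, for even $n$, even that fails because $F^*(\0)=0$ reintroduces the factor $(u,F^*(\0))=u$); for general $u$ only $u^{n/2+1}$ is available, which gives a weaker $R$-saving. To recover $R^{(3+\kappa-n)/2}$ one needs either the averaged estimate of Lemma \ref{lem:average-HB} (equivalently \cite[Lemma 28]{HB}), or an explicit squarefree/squarefull split of $u$ together with the $O(X^{1/2})$ count of squarefull numbers. As it stands the proposal's error analysis would produce a weaker bound than the lemma asserts, in both the $R$- and $M$-aspects.
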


\begin{proof}
We make 
 the change of variables $q=uv_1v_2$ 
and $M=M_{11}M_{12}M_2$, in the notation of \eqref{eq:q-factor} and 
\eqref{eq:M-factor}. Recalling that $[q,M]=uv_1v_2M_2$, 
an application of 
Lemma \ref{lem:3way-split} implies that 
\begin{align*}
\sum_{q\leq R}
[q,M]^{-n}
S_{q,M}(\0)
&=
\sum_{\substack{v_1v_2\leq R\\ v_1v_2\mid M^{\infty} }}\sum_{\substack{u\leq R/(v_1v_2)\\ 
(u,M)=1}} \frac{\phi(v_1)S_{u}(\0)S_{v_2,M_{12}}(\0)\#\Omega_{v_1}\#\Omega_{M_2} }{(uv_1v_2M_2)^{n}}.
\end{align*}
Since $R\leq B$, an inspection of the proof of  \cite[Lemmas 28 and 31]{HB} reveals that 
$$
\sum_{\substack{u\leq R/(v_1v_2)\\ 
(u,M)=1}} 
u^{-n}
S_{u}(\0) =\prod_{p\nmid M} \sigma_p +O\left((R/v_1v_2)^{(3+\kappa-n)/2}B^\ve M^{\ve/2}\right).
$$
Lemma \ref{lem:Weyl} yields
$S_{v_2,M_{12}}(\0)\ll v_2^{n/2+1}M_{12}^{n/2}$. Thus the  
overall contribution from the error term is 
\begin{align*}
&\ll R^{(3+\kappa-n)/2}B^{\ve}M^{\ve/2}
\sum_{\substack{v_1v_2\leq R\\ v_1v_2\mid M^{\infty} }}
\frac{(v_1v_2)^{(n-3-\kappa)/2}
v_1v_2^{n/2+1} M_{12}^{n/2}\#\Omega_{v_1} \#\Omega_{M_2} }{(v_1v_2M_2)^{n}} \\
&\ll R^{(3+\kappa-n)/2}B^{\ve}M^{\ve/2}
\sum_{\substack{v_1v_2\leq R\\ v_1v_2\mid M^{\infty} }}
\frac{ M_{12}^{n/2}v_1^{(n-1-\kappa)/2}  }{v_2^{(1+\kappa)/2}},
\end{align*}
since 
$\#\Omega_{v_1}\leq v_1^n$ and 
$\#\Omega_{M_2}\leq M_2^n$.
Recalling that $M_{12}\mid v_2$, we next observe that
$
M_{12}^{n/2}/v_2^{(1+\kappa)/2}\leq 
M_{12}^{(n-1-\kappa)/2}.
$
Since
$$
\#\{v\leq R: v\mid M^\infty\} \leq \sum_{v\mid M^\infty} \left(\frac{R}{v}\right)^\ve =
R^\ve \prod_{p\mid M}\left(1-p^{-\ve}\right)^{-1}\ll R^{\ve}M^{\ve/4},
$$
the error term gives the overall contribution 
$
O(  R^{(3+\kappa-n)/2}B^{3\ve}M^{(n-1-\kappa)/2+\ve}).
$

Redefining the choice of $\ve>0$, we have therefore proved that 
\begin{align*}
\sum_{q\leq R}
[q,M]^{-n}
S_{q,M}(\0)
=~&
\prod_{p\nmid M}\sigma_p
\sum_{\substack{v_1v_2\leq R\\ v_1v_2\mid M^{\infty} }}
\frac{\phi(v_1)S_{v_2,M_{12}}(\0)\#\Omega_{v_1}\#\Omega_{M_2} }{
(v_1v_2M_2)^{n}}\\ \quad &+
O\left(R^{(3+\kappa-n)/2}B^\ve M^{(n-1-\kappa)/2+\ve}\right).
\end{align*}
Employing  Lemma \ref{lem:Weyl} once more, we obtain
\begin{align*}
\sum_{\substack{v_1v_2>R\\ v_1v_2\mid M^{\infty} }}
\frac{\phi(v_1)\#\Omega_{v_1}\#\Omega_{M_2} S_{v_2,M_{12}}(\0)}{
(v_1v_2M_2)^{n}}
&\ll 
\sum_{\substack{v_1v_2> R\\ v_1v_2\mid M^\infty}} \frac{M_{12}^{n/2}}{v_2^{n/2-1}}
\left(\frac{v_1v_2}{R}\right)^{(n-3-\kappa)/2-\ve}
\\
&\ll R^{(3+\kappa-n)/2}B^{\ve} M^{(n-1-\kappa)/2+\ve},
\end{align*}
on recalling that $v_1=M_{11}$ and arguing as before.
In view of the fact  that  $\prod_{p\nmid M}\sigma_p\ll 1$ for $n\geq 5$, it  follows that 
\begin{align*}
\sum_{q\leq R}
[q,M]^{-n}
S_{q,M}(\0)
=~&
C_M
\prod_{p\nmid M}\sigma_p
+
O\left(R^{(3+\kappa-n)/2}B^\ve M^{(n-1-\kappa)/2+\ve}\right).
\end{align*} 
Here
$$
C_M=\sum_{\substack{v_1v_2\mid M^{\infty} }}
\frac{\phi(v_1)\#\Omega_{v_1}\#\Omega_{M_2} S_{v_2,M_{12}}(\0)}{
(v_1v_2M_2)^{n}},
$$
where the sum is constrained to satisfy  $(v_1,v_2)=\mu^2(v_1)=1$, with  $v_2$ is square-full.
It remains to calculate this quantity.
We have
$$
\sum_{\substack{v_1\mid M^\infty\\ (v_1,v_2)=1}} \phi(v_1)\mu^2(v_1) = \frac{M^\flat}{(M,v_2)^\flat},
$$
where $k^\flat=\prod_{p\mid k}p$ is the square-free kernel.
Since $\#\Omega_{v_1}\#\Omega_{M_2}=\#\Omega_{M}/\#\Omega_{M_{12}}$, we see that
\begin{align*}
C_M&=\frac{{\#\Omega_M} M^\flat}{M^n}
\sum_{\substack{v_2\mid M^{\infty} \\ \text{$v_2$ square-full}}}
\frac{S_{v_2,M_{12}}(\0)M_{12}^n}{
(M,v_2)^\flat v_2^{n} \#\Omega_{M_{12}}}\\
&=
\frac{{\#\Omega_M} M^\flat}{M^n}
\prod_{p^m\|M} \left(1+\frac{1}{p}\sum_{\ell\geq 2} 
\frac{S_{p^\ell,p^{\min\{\ell,m\}}}(\0) p^{\min\{\ell,m\}n}}{
p^{\ell n} \#\Omega_{p^{\min\{\ell,m\}}}}\right).
\end{align*}
Let $\Sigma_p$ denote the sum over $\ell$. 
The contribution to $\Sigma_p$ from $\ell\in [2,m]$ is 
$
\sum_{2\leq \ell\leq m}  \phi(p^\ell)
 =p^m-p.
$
On the other hand, on evaluating the Ramanujan sum,  the contribution to $\Sigma_p$ from $\ell>m$ is 
\begin{align*}
\frac{p^{mn}}{\#\Omega_{p^m}}
\sum_{\ell>m} 
\frac{S_{p^\ell,p^{m}}(\0) }{p^{\ell n}}
&=
\frac{p^{mn}}{\#\Omega_{p^m}}
\sum_{\ell>m} 
\frac{p^\ell N(\ell)-p^{n+\ell-1}N(\ell-1)}{p^{\ell n}}\\
&=
\frac{p^{mn}}{\#\Omega_{p^m}}
\left(\lim_{\ell\to \infty} 
p^{-\ell(n-1)} N(\ell) -p^{-m(n-1)}\#\Omega_{p^m}\right),
\end{align*}
where 
$N(k)=\#\{\y\bmod{p^k}: p^k\mid F(\y), ~[\y]_{p^m} \in \Omega_{p^m}\}$,
for any $k\geq m$.
Note that $p\nmid \y$ in $N(k)$ by our assumption on 
$\Omega_M$ in \eqref{eq:subset}.
Putting everything together, we deduce that 
$$
C_M=
\frac{{\#\Omega_M} M^\flat}{M^n}
\prod_{p^m\|M} 
\left(\frac{p^{mn-1}}{\#\Omega_{p^m}}
\lim_{\ell\to \infty} 
p^{-\ell(n-1)} N(\ell)\right)=
\prod_{p^m\|M} 
\lim_{\ell\to \infty} 
p^{-\ell(n-1)} N(\ell).
$$
Finally, since $(M,2\Delta_F)=1$, a straightforward application of
Lemma \ref{lem:Hensel}
shows that 
$N(k+1)=p^{n-1}N(k)$ for all $k\geq m$. Thus  we can replace the limit by $p^{-m(n-1)}N(m)=
p^{-m(n-1)}\#\Omega_{p^m}$.
This  completes the proof of the lemma. 
\end{proof}

For convenience we put 
$$
\mathfrak{S}(M)=
\prod_{p\nmid M}\sigma_p
\prod_{p^m\| M} \frac{\#\Omega_{p^m}}{p^{m(n-1)}}.
$$

Next, 
let $\sigma_\infty(w)$ be the weighted real density associated to $F$, as defined in \cite[Thm.~3]{HB}. 
Since $\nabla F(\x)\gg 1$ throughout the support of $w$, it follows from \cite[Lem.~13]{HB} that
$
I_{q/B}^*(\0)
=\sigma_\infty(w)+O_A((q/B)^A),
$
for any $A>0$. 
Hence
$$
\sum_{q\leq B^{1-\ve}}
\frac{S_{q,M}(\0)
I_{q/B}^*(\0)}{[q,M]^{n}} = 
\sigma_\infty(w)
\sum_{q\leq B^{1-\ve}}
[q,M]^{-n}S_{q,M}(\0)
 +O(B^{-n}),
$$
on taking $A$ sufficiently large. 
We apply Lemma \ref{lem:sing-series} with $R=B^{1-\ve}$ to estimate the inner sum, finding that 
\begin{align*}
\sum_{q\leq B^{1-\ve}}
\frac{S_{q,M}(\0)
I_{q/B}^*(\0)}{[q,M]^{n}} =~& \sigma_\infty(w)\mathfrak{S}(M)+O\left(
B^{(3+\kappa-n)/2+(n+1)\ve}M^{(n-1-\kappa)/2+\ve}
\right).
\end{align*}

We have  the bounds 
$
\frac{\partial^i }{\partial q^i} I_{q/B}^*(\0)\ll q^{-i}$, for $i\in \{0,1\}$,
which are a direct consequence of \cite[Lemmas 14 and 15]{HB}.
Hence 
we may combine Lemma \ref{lem:sing-series} with partial summation to conclude that 
\begin{align*}
\sum_{B^{1-\ve}<q\ll B}
\frac{S_{q,M}(\0)
I_{q/B}^*(\0) }{[q,M]^{n}}
&\ll 
B^{(3+\kappa-n)/2+(n+1)\ve}M^{(n-1-\kappa)/2+\ve}.
\end{align*}
Bringing everything together in \eqref{eq:TB}, 
and redefining the choice of $\ve$,  
we finally arrive at the  estimate
$$
T(B)=
\sigma_\infty(w)\mathfrak{S}(M)B^{n-2}
+O\left(
B^{(n-1+\kappa)/2+\ve}M^{(n-1-\kappa)/2+\ve}\right).
$$
This shows that the contribution from the trivial character is satisfactory for Theorem \ref{t:circle}.

\subsection{Contribution from the non-trivial characters}\label{s:black}

It remains to consider the contribution $E(B)$, say,  from  
$\c\neq \0$ in \eqref{eq:hat-N}.
Thus
$$
E(B)\ll 
B^{n-2}
\sum_{q\ll B}\; 
\sum_{\0\neq \c\in \ZZ^n}
[q,M]^{-n}
|S_{q,M}(\c)| |I_{r}^*({M'}^{-1}\c)|,
$$
where $r=q/B$,  
$M'=[q,M]/q=M/(q,M)$ and 
$I_{r}^*(\v)$ is defined in \eqref{eq:I*}.
It follows from \cite[Lemmas 14 and 18]{HB}
that 
$
I_r^*(\v)\ll_A r^{-1}|\v|^{-A}
$
for any $A>0$. Hence there is a negligible contribution to $E(B)$ from  vectors $\c\in \ZZ^n$ for which $|\c|\geq M' B^\ve$ for any fixed value of $\ve>0$. 
We now apply \cite[Lemmas 14 and 22]{HB} to deduce that
$$
I_r^*(\v)\ll  (r^{-2}|\v|)^{\ve/10} (r^{-1}|\v|)^{1-n/2}.
$$
Hence 
\begin{align*}\label{eq:EB}
E(B)
&\ll 
B^{n/2-1+\ve}
\sum_{\substack{q\ll B}} 
\sum_{\substack{\c\in \ZZ^n\\ 
0<|\c|\leq M'B^{\ve}
}}|\c|^{1-n/2}
\frac{|S_{q,M}(\c)|}{[q,M]^{n/2+1}},
\end{align*}
since $qM'=[q,M]$ and $r^{-1}|\v|=B|\c|/[q,M]$.
We carry out the change of variables 
recorded in \eqref{eq:q-factor} and \eqref{eq:M-factor} and recall that 
$\#\Omega_{v_1}\leq v_1^n$.
In this notation, it follows from
Lemmas~\ref{lem:3way-split}   and \ref{lem:Weyl} that 
\begin{align*}
\frac{|S_{q,M}(\c)|}{[q,M]^{n/2+1}}
&=
\frac{\left| \phi(v_1)S_{u}(\c)
S_{v_2,M_{12}}(\overline{uv_1M_2}\c) 
K_{v_1}(\overline{uv_2M_2}\c)
K_{M_2}(\overline{uv_1v_2}\c)\right|}
{(uv_1v_2M_2)^{n/2+1}}\\
&\ll 
\frac{\left|S_{u}(\c)
v_1^{n/2}
K_{M_2}(\overline{uv_1v_2}\c) M_{12}^{n/2}\right| }{(uM_2)^{n/2+1}},	
\end{align*}
where  $M_2=M/(M,v_1v_2)$.

Let $\mathcal{V}$ denote the set of  vectors $(v_1,v_2)\in \NN^2$ such that $v_1v_2\ll B$ and $v_1v_2\mid M^\infty$, with 
$(v_1,v_2)=\mu^2(v_1)=1$ and $v_2$ square-full. 
Noting that $M'=M/(q,M)=M_2$, 
 we deduce that
\begin{equation}\label{eq:EB}
E(B)
\ll
B^{n/2-1+\ve}
\sum_{\substack{(v_1,v_2)\in \mathcal{V} }}
\frac{v_1^{n/2}M_{12}^{n/2}
}{M_2^{n/2+1}}
E_{v_1,v_2}(B),
\end{equation}
where
$$
E_{v_1,v_2}(B)=
\sum_{\substack{u\ll B/(v_1v_2)\\ 
(u,M)=1}}
\sum_{\substack{\c\in \ZZ^n\\ 
0<|\c|\leq M_2B^{\ve}
}}|\c|^{1-n/2}
\frac{\left|S_{u}(\c)
K_{M_2}(\overline{uv_1v_2}\c)\right|}{u^{n/2+1}}.
$$
The presence of $\bar{u}$ in 
$K_{M_2}(\overline{uv_1v_2}\c)$ prevents us from 
executing the sum over $u$ directly. 
To separate 
$S_{u}(\c)$ and $K_{M_2}(\overline{uv_1v_2}\c)$, we shall apply  Cauchy's inequality. This gives
$E_{v_1,v_2}(B)^2\leq  \Sigma_1\Sigma_2$,
where
\begin{align*}
\Sigma_1&=
\sum_{\substack{u\ll B/(v_1v_2)\\ 
(u,M)=1}}
\sum_{\substack{\c\in \ZZ^n\\ 
0<|\c|\leq M_2B^{\ve}
}}
|\c|^{2-n}
\frac{\left|S_{u}(\c)\right|^2}{u^{n+2}},\\
\Sigma_2&=
\sum_{\substack{u\ll B/(v_1v_2)\\ 
(u,M)=1}}
\sum_{\substack{\c\in \ZZ^n\\ 
0<|\c|\leq M_2B^{\ve}
}}
\left|K_{M_2}(\overline{uv_1v_2}\c)\right|^2.
\end{align*}
The following results are concerned with estimating these quantities. 

\begin{lemma}\label{lem:sigma1}
We have 
$$
\Sigma_1\ll  (M_2B)^\ve \left(
M_2^2 \left(B/(v_1v_2)\right)^{1/2}+
\left(B/(v_1v_2)\right)^{(1+\kappa)/2}\right),
$$
where $\kappa$ is given by \eqref{eq:kappa}.
\end{lemma}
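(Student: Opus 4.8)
The plan is to interchange the order of summation in $\Sigma_1$, performing the sum over $u$ first for each fixed $\c$, and then summing over $\c$. Since every term is non-negative, we may drop the condition $(u,M)=1$ at the outset, as this only enlarges the sum. Set $R_0\asymp B/(v_1v_2)$, so that the inner sum is $\sum_{u\le R_0}|S_u(\c)|^2 u^{-(n+2)}$.

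First I would fix $\c$ and estimate this inner sum by partial summation from Lemma~\ref{lem:average-HB}. Writing $A(t)=\sum_{u\le t}|S_u(\c)|^2$, partial summation bounds it by a multiple of $A(R_0)R_0^{-(n+2)}+\int_1^{R_0}A(t)t^{-(n+3)}\,\d t$. Feeding in the two cases of Lemma~\ref{lem:average-HB}: when $n$ is odd, or $n$ is even and $F^*(\c)\ne\0$, we have $A(t)\ll t^{n+5/2+\ve}(1+|\c|)^\ve$, and both terms are $\ll R_0^{1/2+\ve}(1+|\c|)^\ve$; when $n$ is even and $F^*(\c)=\0$, we have $A(t)\ll t^{n+3}$, and both terms are $\ll R_0$. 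In the notation of \eqref{eq:kappa}, the first bound equals $R_0^{(1+\kappa)/2+\ve}(1+|\c|)^\ve$ when $\kappa=0$, while the second only occurs when $\kappa=1$, where it equals $R_0^{(1+\kappa)/2}$.

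Next I would sum over $\c$ with $0<|\c|\le M_2B^\ve$ weighted by $|\c|^{2-n}$, splitting off the ``exceptional'' set $F^*(\c)=\0$ (which is empty unless $n$ is even). For the generic vectors, a dyadic decomposition together with $\#\{\c\in\ZZ^n:|\c|\asymp T\}\ll T^n$ and the hypothesis $n\ge 5$ gives $\sum_{0<|\c|\le X}|\c|^{2-n}(1+|\c|)^\ve\ll X^{2+\ve}$; taking $X=M_2B^\ve$ and recalling $R_0\le B$ yields a contribution $\ll (M_2B)^\ve M_2^2(B/(v_1v_2))^{1/2}$ to $\Sigma_1$. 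For the exceptional vectors I would use that $F^*$ is a nonsingular quadratic form — its underlying matrix $\Delta_F\mathbf{A}^{-1}$ has determinant $\Delta_F^{n-1}\ne 0$ — so that $\#\{\c\in\ZZ^n:0<|\c|\le T,\ F^*(\c)=\0\}\ll_\ve T^{n-2+\ve}$; a dyadic summation then gives $\sum_{0<|\c|\le X,\,F^*(\c)=\0}|\c|^{2-n}\ll_\ve X^\ve$, so this part contributes $\ll (M_2B)^\ve(B/(v_1v_2))^{(1+\kappa)/2}$ (here $\kappa=1$). Adding the two contributions and redefining $\ve$ gives the claimed estimate.

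The main obstacle is the treatment of the exceptional vectors with $F^*(\c)=\0$ when $n$ is even: there Lemma~\ref{lem:average-HB} supplies only the weaker mean-square bound, and to compensate one genuinely needs the sharp counting estimate $T^{n-2+\ve}$ for integer zeros of $F^*$ in a box of side $T$ — the cruder bound $T^{n-1}$ would cost an extra factor of $M_2$ and break the lemma. This estimate is classical (for $n\ge 5$ it is the main term of the Hardy--Littlewood circle method, and an unconditional upper bound for a nonsingular quadratic form can in any case be extracted by elementary means), so I would simply invoke it.
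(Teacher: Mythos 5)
Your argument is correct and matches the paper's proof essentially step for step: interchange the order of summation, bound the inner $u$-sum by partial summation from Lemma~\ref{lem:average-HB} (split according to whether $F^*(\c)$ vanishes), invoke the standard count $O(C^{n-2})$ for integer zeros of the non-degenerate dual form $F^*$ in a box, and finish with a dyadic decomposition of the $\c$-sum. Your version is slightly more explicit in spelling out Abel summation and in noting where the nonsingularity of $F^*$ enters, but the structure, the key lemma, and the crucial zero-count are identical to the paper's.
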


\begin{proof}
Let $v=v_1v_2$.
To begin with, 
it follows from Lemma \ref{lem:average-HB} and partial summation that 
\begin{align*}
\sum_{\substack{u\ll B/v}}
\frac{\left|S_{u}(\c)\right|^2}{u^{n+2}}\ll 
\begin{cases}
\left(B/v\right)^{1/2+\ve}|\c|^\ve, &\text{ if $F^*(\c)\neq 0$,}\\
\left(B/v\right)^{(1+\kappa)/2+\ve}(1+|\c|)^\ve, 
&\text{ if $F^*(\c)= 0$}.
\end{cases}
\end{align*}
A standard estimate shows that 
there are $O(C^{n-2})$ vectors $\c\in \ZZ^n$, such that  $|\c|\leq C$ and $F^*(\c)=0$. 
Hence 
\begin{align*}
\Sigma_1\ll  (M_2B^\ve)^{2+\ve}
\left(B/v\right)^{1/2+\ve}+
(M_2B^\ve)^{2\ve}
\left(B/v\right)^{(1+\kappa)/2+\ve},
\end{align*}
on breaking the $\c$-sum into dyadic intervals. This therefore completes the proof of the lemma, on redefining the choice of $\ve>0$.
\end{proof}

\begin{lemma}
\label{lem:sigma2}
We have 
$
\Sigma_2\ll  B^{1+\ve}M_2^{2n}/(v_1v_2).
$
\end{lemma}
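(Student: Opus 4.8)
The plan is to exploit the fact that the character sum $K_{M_2}$ is a function on $(\ZZ/M_2\ZZ)^n$, so that $\bigl|K_{M_2}(\overline{uv_1v_2}\c)\bigr|^2$ depends on $\c$ only through its residue class modulo $M_2$. The sum over $\c$ in a box of the shape $0<|\c|\leq M_2B^\ve$ then collapses, up to a factor $O(B^{n\ve})$, to a complete residue sum modulo $M_2$, which is evaluated by orthogonality of additive characters; the crude bound $\#\Omega_{M_2}\leq M_2^n$ (already the operative one in this section) then suffices.

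Concretely, I would fix $u$ and partition the vectors $\c$ with $0<|\c|\leq M_2B^\ve$ according to $\c\bmod M_2$. Each residue class contains $\ll(1+B^\ve)^n\ll B^{n\ve}$ such vectors, since in each of the $n$ coordinates there are at most $2M_2B^\ve/M_2+1=2B^\ve+1$ integers lying in a prescribed residue class modulo $M_2$ and bounded by $M_2B^\ve$ in absolute value. Hence
$$\sum_{\substack{\c\in\ZZ^n\\ 0<|\c|\leq M_2B^\ve}}\bigl|K_{M_2}(\overline{uv_1v_2}\c)\bigr|^2\ll B^{n\ve}\sum_{\b\bmod M_2}\bigl|K_{M_2}(\overline{uv_1v_2}\b)\bigr|^2=B^{n\ve}\sum_{\mathbf{d}\bmod M_2}\bigl|K_{M_2}(\mathbf{d})\bigr|^2,$$
the last step using $(uv_1v_2,M_2)=1$, so that $\b\mapsto\overline{uv_1v_2}\b$ permutes $(\ZZ/M_2\ZZ)^n$. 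Opening the square and invoking orthogonality of additive characters modulo $M_2$,
$$\sum_{\mathbf{d}\bmod M_2}\bigl|K_{M_2}(\mathbf{d})\bigr|^2=\sum_{\y,\y'\in\Omega_{M_2}}\ \sum_{\mathbf{d}\bmod M_2}e_{M_2}\bigl(\mathbf{d}.(\y-\y')\bigr)=M_2^n\,\#\Omega_{M_2}\leq M_2^{2n},$$
where we used only $\Omega_{M_2}\subseteq(\ZZ/M_2\ZZ)^n$.

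Finally I would sum over the $\ll B/(v_1v_2)$ admissible values of $u$, which gives $\Sigma_2\ll\bigl(B/(v_1v_2)\bigr)B^{n\ve}M_2^{2n}$, and then redefine $\ve$ to absorb the factor $B^{n\ve}$; this is precisely the asserted bound. There is essentially no obstacle: the only points requiring care are that $K_{M_2}$ genuinely descends to $(\ZZ/M_2\ZZ)^n$ and that the unit $\overline{uv_1v_2}$ modulo $M_2$ leaves the complete residue sum unchanged — beyond that the argument is routine lattice-point counting and character orthogonality.
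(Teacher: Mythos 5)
Your proof is correct and follows essentially the same route as the paper: exploit that $K_{M_2}(\overline{uv_1v_2}\c)$ depends only on $\c \bmod M_2$, count $\ll B^{n\ve}$ lattice points per residue class, evaluate the complete sum $\sum_{\mathbf{d}\bmod M_2}|K_{M_2}(\mathbf{d})|^2 = M_2^n\#\Omega_{M_2}\leq M_2^{2n}$ by orthogonality, and sum trivially over $u\ll B/(v_1v_2)$ before redefining $\ve$. The only cosmetic difference is that you spell out the permutation of residues by the unit $\overline{uv_1v_2}$, which the paper leaves implicit.
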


\begin{proof}
Since $K_{M_2}(\overline{uv_1v_2}\c)$ only depends on the value of $\c$ modulo $M_2$, we may break into residue classes modulo $M_2$, concluding that
\begin{align*}
\Sigma_2
&\leq 
\sum_{\substack{u\ll B/(v_1v_2)}}
\sum_{\a\bmod{M_2}}
\left|K_{M_2}(\a)\right|^2
\#\left\{
\c\in \ZZ^n: 
|\c|\leq M_2B^{\ve}, ~\c\equiv \a\bmod{M_2}\right\}\\
&\ll \frac{B^{1+n\ve}}{v_1v_2}
\sum_{\a\bmod{M_2}}
\left|K_{M_2}(\a)\right|^2.
\end{align*}
But the inner sum over $\a$ is
$M_2^n \#\Omega_{M_2}\leq M_2^{2n}$, 
by   orthogonality of characters.  The lemma follows on redefining $\ve$.
\end{proof}

Combining Lemma \ref{lem:sigma1} and \ref{lem:sigma2} in \eqref{eq:EB}, we deduce that 
\begin{align*}
E(B)
&\ll
B^{(n-1)/2+2\ve} 
\sum_{\substack{(v_1,v_2)\in \mathcal{V} }}
\frac{v_1^{(n-1)/2} M_{12}^{n/2}M_2^{n/2+\ve}}{v_2^{1/2}}
 \left(
\frac{B}{v_1v_2}\right)^{(1+\kappa)/4}\\
&\ll
B^{n/2+(\kappa-1)/4+2\ve}  M^{n/2+\ve}
\sum_{\substack{(v_1,v_2)\in \mathcal{V} }}
\frac{1}{(v_1v_2)^{(3+\kappa)/4}}\\
&\ll 
B^{n/2+(\kappa-1)/4+3\ve} M^{n/2+2\ve},
\end{align*}
since $v_1M_{12}M_2=M$.
This completes the proof of Theorem \ref{t:circle} on redefining the choice of $\ve>0$. \qed

\section{The  Selberg sieve on quadrics}
\label{s:weights}

In this section we prove Theorem \ref{thm:Selberg} and its applications. 

\subsection{Proof of Theorem \ref{thm:Selberg}}

Points of $X(\QQ)$ are represented by vectors $\x=(x_0,\dots,x_n)\in \ZZ_{\mathrm{prim}}^{n+1}$ such that 
$F(\x)=0$, where $F \in \ZZ[x_0,\ldots,x_n]$ is the quadratic form defining $X$.  As in the previous section,  
for any $d\in \NN$ we write $[\x]_d$ for the reduction of  $\x$ modulo $d$. Passing to the affine cone, we have   
$$
 N(X,H,\Omega, B)
\leq \#\left\{\x\in \ZZ_{\text{prim}}^{n+1}: |\x|\leq B,~ F(\x)=0, ~
[\x]_{p^m} \in \hat\Omega_{p^m} \text{ for all $p$}\right\}.
$$
where  $\hat\Omega_{p^m}=\{\x\in (\ZZ/p^m\ZZ)^{n+1}: p\nmid \x, ~
(x_0:\dots:x_n)\in \Omega_{p^m}
\}$
and $|\cdot|$ is the supremum norm on $\RR^{n+1}$.
We apply the Selberg sieve to  estimate this.

Consider the function $\omega_0: \RR \rightarrow \RR_{\geq 0}$, given by
$$
\omega_0(x) = 
\begin{cases}
e^{-(1-x^2)^{-1}} & \text{ if $|x| < 1$,}\\  
0 & \text{ if $|x|\geq 1$}.
\end{cases} 
$$
Then $\omega_0$ is infinitely differentiable and compactly supported on
$[-1,1]$.   We work with the weight function
$w:\RR^{n+1}\to \RR_{\geq 0}$, given by 
$$
w(\x)=\omega_0\left(\tfrac{5}{2}|\A\x|-2\right),
$$
where $\A$ is the non-singular matrix defining $F$, with determinant $\Delta_F$.  
It is clear that $w(\x)=0$ unless $\tfrac{2}{5}\leq |\A\x| \leq \frac{6}{5}$. In particular $w$ is supported on a region $\x\ll 1$, where we adhere to the convention that the implied constant in any estimate is allowed to depend on $F$.
Moreover, $\nabla F(\x)\geq \tfrac{1}{5}$ 
throughout the support of $w$.  It therefore follows that $w$ belongs to the class of weight functions $\mathcal{C}_0^+(S)$ introduced in 
\cite[\S 2 and \S 6]{HB}, for an appropriate set of parameters $S$ including $n$ and  the coefficients of the  quadratic form $F$.

We have 
$|\A\x|\leq (n+1)\|\A\|B$ for any $\x\in \ZZ^{n+1}$ such that $|\x|\leq B$, where $\|\A\|$ is the maximum modulus of 
 the coefficients of $\A$. Let $c=(n+1)\|\A\|$. We break the sum into dyadic intervals for $|\A\x|$, finding that
\begin{align*}
 N(X,H,\Omega, B)
&\leq 
\sum_{j=0}^\infty 
\#\left\{\x\in \ZZ_{\text{prim}}^{n+1}:  
\begin{array}{l}
2^{-j-1}cB<|\A\x|\leq 2^{-j}cB\\ 
F(\x)=0,
~[\x]_{p^m}\in \hat\Omega_{p^m} \text{ for all $p$}
\end{array}
\right\}\\
&\ll 
\sum_{j=0}^\infty 
\sum_{\substack{\x\in\ZZ_{\text{prim}}^{n+1}, ~F(\x)=0\\ [\x]_{p^m}\in \hat \Omega_{p^m} 
\text{ for all $p$}}}
w(2^j\x/(cB)).
\end{align*}
It will clearly suffice to show that 
\begin{equation}\label{eq:guides}
\sum_{\substack{\x\in\ZZ_{\text{prim}}^{n+1}, ~F(\x)=0\\ [\x]_{p^m}\in \hat \Omega_{p^m} 
\text{ for all $p$}}}
w(\x/B)
\ll_{\ve,X}
\frac{B^{n-1}}{G(\xi)}
+\xi^{m(n+1)+2+\ve} B^{(n+1)/2+\ve},
\end{equation}
for any $B,\xi\geq 1$, 
with 
$G(\xi)$ as in the statement of 
Theorem \ref{thm:Selberg}.

Let $P$ denote the produce over distinct primes $p<\xi$ for which $\omega_p>0$.
For $n\in \NN$ we define the finite sequence of non-negative numbers
$$
a_n=\sum_{\substack{\x\in\ZZ_{\text{prim}}^{n+1} \\ F(\x)=0\\ n(\x)=n}}
w(\x/B), \quad \text{ where } \quad
n(\x)=\prod_{\substack{p\mid P\\ 
[\x]_{p^m}\in \hat \Omega_{p^m}^c 
}}p.
$$
The left hand  side of \eqref{eq:guides} can be written
$\sum_{(n,P)=1}a_n$. We seek to apply the Selberg sieve, in the form \cite[Thm.~7.1]{FI}, to estimate this quantity.

Let $d\mid P$. 
First  note that $d\mid n(\x)$ if and only if $[\x]_{p^m}\in \hat \Omega_{p^m}^c $ for all $p\mid d$, for any $\x$ appearing in the definition of $a_n$.
Hence    Theorem \ref{t:circle} implies that 
$$
\sum_{d\mid n}a_n= 
g(d)B^{n-1} \sigma_\infty(w)
\prod_{p}\sigma_p 
+O_{\ve,X}(d^{m(n+1)/2+\ve/4}B^{(n+1)/2+\ve}),
$$
where 
$$
g(d)=\prod_{p\mid d} \left(1-\frac{\#\hat \Omega_{p^m}}{\#\hat X(\ZZ/p^m)}\right)=\prod_{p\mid d} \omega_p,
$$
in the notation of \eqref{eq:omega}.
In deriving this expression for $g(d)$,  we have used Lemma~\ref{lem:Hensel} for 
$p\nmid 2\Delta_F$ to usher in the appearance of  $
\#\hat X(\ZZ/p^m\ZZ)$ in the denominator.
Clearly $g(p)=\omega_p$ satisfies $0<g(p)<1$ for every $p\mid P$.
It now follows from \cite[Thm.~7.1 and Eq.~(7.32)]{FI} that 
$$
\sum_{(n,P)=1}a_n\ll_{\ve,X} \frac{B^{n-1}}{G(\xi)} + \sum_{d\leq \xi^2} \tau_3(d)
d^{m(n+1)/2+\ve/4}B^{(n+1)/2+\ve},
$$
Taking the trivial bound $\tau_3(d)\ll d^{\ve/4}$ and summing over $d\leq \xi^2$, this therefore concludes the proof of \eqref{eq:guides} and so the proof of 
Theorem \ref{thm:Selberg}.  \qed

\subsection{Proof of Theorem \ref{thm:quadrics_thin}}\label{s:thin}

Let $X\subset \PP^n$ be a non-singular quadric hypersurface defined over $\QQ$, with dimension  $n-1\geq 3$. Let $\Upsilon\subset X(\QQ)$ be a thin subset.  To prove Theorem \ref{thm:quadrics_thin}, it suffices to consider thin sets of type $I$ and $II$ (see \S \ref{sec:thin}).

We begin with the more difficult case of type $II$.
By  Lemma \ref{prop:thin} there is a
set of primes $\cP$ of positive natural density $\delta$
and a constant $c\in (0,1)$, such
that for each $p \in \cP$ we have 
$$\#(\Upsilon \bmod p)\leq cp^{n-1} + O_{\Upsilon}(p^{n-3/2}).$$
Taking $m=1$ in 
\eqref{eq:omega}, 
for such $p$ we therefore have
$\omega_p \geq 1-c+O_{\Upsilon}(p^{-1/2})$. 
It follows that there exists 
$\eta<(1-c)/c$ such that 
$$\frac{\omega_p}{1 -\omega_p} \geq 
\eta$$
for large enough $p\in \cP$. Let $\cP^\circ$ denote the set of such $p \in \cP$.
An application of Lemma \ref{lem:Wirsing} 
now yields
$$
G(\xi) \geq  \sum_{\substack{a \leq \xi \\ p\mid a \Rightarrow p \in \mathcal{P}^\circ}} \mu^2(a)  \eta^{\omega(a)}  \gg_{\Upsilon,X} \xi(\log \xi)^{\eta\delta -1}  \gg_{\varepsilon,\Upsilon,X} \xi^{1-\varepsilon},
$$
for any $\varepsilon > 0$.
It therefore follows from Theorem~\ref{thm:Selberg} that 
$$
\#\left\{x\in \Upsilon(\QQ): H(x)\leq B\right\}
\ll_{\ve,\Upsilon,X}
\xi^{-1+\ve}B^{n-1} + \xi^{n+3+\ve}B^{(n+1)/2+\ve},
$$
Balancing the terms by choosing  $\xi=B^{\theta_n}$, with 
$\theta_n=\frac{n-3}{2(n+4)}=\frac{1}{2}-\frac{7}{2(n+4)}$,  
this is plainly satisfactory for Theorem 
\ref{thm:quadrics_thin}.

Turning to thin sets of type $I$, we 
let $Z \subset X$ be a Zariski closed subset with $Z \neq X$.
For any prime $p$,  Lemma \ref{prop:thin} implies that 
$\#Z(\FF_p) \leq cp^{n-2},$ 
for some $c=c(Z)>0$. Then $\omega_p \geq 1-cp^{-1}$ and it follows that 
$$\frac{\omega_p}{1 -\omega_p} \geq \frac{1 - cp^{-1}}{cp^{-1}} = \frac{p}{c} - 1.$$
A further application of  Lemma \ref{lem:Wirsing} now implies that 
$
G(\xi) \gg_{\varepsilon,\Upsilon} \xi^{2 - \varepsilon}
$
for all $\ve > 0$. We complete the proof of Theorem 
\ref{thm:quadrics_thin} by arguing as above. \qed

\subsection{Proof of Theorem \ref{thm:quadrics_Delta}} 
\label{s:arne}

Let $\pi:Y\to X$ be a dominant map with $X\subset \PP^n$ a smooth quadric hypersurface of dimension at least $3$, as in the statement of  
Theorem \ref{thm:quadrics_Delta}. Then 
\begin{align*}
	N(X,H,\pi,B) 
	& \leq \# \{ x \in X(\QQ): H(x) \leq B, ~x \in \pi(Y(\QQ_p)) ~\forall p\} \\
	& \leq \# \{ x \in X(\QQ): H(x) \leq B, ~x \bmod p^2 \in \pi(Y(\ZZ_p)) \bmod p^2 ~\forall p\}.
\end{align*}
We now apply Theorem \ref{thm:Selberg} with $\Omega_{p^2} = (\pi(Y(\ZZ_p)) \bmod p^2)$ to find that 
$$
N(X,H,\pi,B)\ll_{\ve,X}
 \frac{B^{n-1}}{G(\xi)} + \xi^{2n +4 + \varepsilon}B^{(n+1)/2 + \varepsilon},
 $$
where 
$$
G(\xi)=\sum_{a \leq \xi} \mu^2(a )\prod_{p\mid a} 
\left(\frac{\omega_p}{1-\omega_p}\right), \quad  \omega_p = 1 - \frac{\#\pi(Y(\ZZ_p) \bmod p^2)}{\#X(\ZZ/p^2\ZZ)}.$$
Taking $\xi$ to be a small power of $B$, the result follows from Corollary \ref{cor:Wirsing}. \qed

\subsection{Proof of Theorem \ref{thm:quadrics_friable}}\label{s:fry}
Let $Z\subset X$ be a divisor and let $r(Z)$ be the number of irreducible components of $Z$.
We are led to apply Theorem \ref{thm:Selberg} with $m=1$ and 
$\Omega_p=X(\FF_p)\setminus Z(\FF_p)$ for $p>y$. 
Thus 
\begin{equation}\label{eq:ORD}
\omega_p=  
1 - \frac{\#X(\FF_p) - \#Z(\FF_p)}{\#X(\FF_p)}=
\frac{ \#Z(\FF_p)}{\#X(\FF_p)}.
\end{equation}
As $1-\omega_p\leq 1$,  we obtain 
\begin{align*}
G(\xi)
& \geq \sum_{\substack{k<\xi\\ p\mid k\Rightarrow p> y}}
\mu^2(k)\prod_{p\mid k} \omega_p.
\end{align*}
Applying \eqref{eq:flight} and  Lemma \ref{lem:Wirsing}, we see that 
\begin{align*}
  \sum_{\substack{k<\xi\\ p\mid k\Rightarrow p> y}}
k \mu^2(k) \prod_{p\mid k} \omega_p
& \asymp_y\frac{\xi}{\log \xi} \prod_{p < \xi}\left(1 + \omega_p \right) 
=\frac{\xi}{\log \xi} \prod_{p < \xi}\frac{1 + \omega_p^2 }{
1-\omega_p}.
\end{align*}
As $\omega_p$ is given by \eqref{eq:ORD},  
an application of Lemma \ref{lem:Z} shows that the product over primes is 
$\asymp (\log \xi)^{r(Z)}.$ Using partial summation to remove  $k$, we have
$G(\xi)\gg (\log \xi )^{r(Z)}.$
The result follows on
taking $\xi$ to be a  small power of $B$.
\qed

\end{document}